\newtheorem{theorem}{Theorem}
\newtheorem{lemma}[theorem]{Lemma}
\newtheorem{proposition}[theorem]{Proposition}
\theoremstyle{definition}
\newtheorem{remark}{Remark}
\newcommand{\sgn}{\mathop{\mathrm{sgn}}}
\begin{document}

\title{A New Approach to Motion Planning in 3D for a Dubins Vehicle: Special Case on a Sphere
\\
\thanks{Deepak Prakash Kumar is with the Department of Electrical Engineering and Computer Science, University of California, Irvine, CA 92697, USA (e-mail: {\tt\small deepakprakash1997@gmail.com}).

Swaroop Darbha is with the Department of Mechanical Engineering, Texas A\&M University, College Station, TX 77843, USA (e-mail: {\tt\small dswaroop@tamu.edu}).

Satyanarayana Gupta Manyam is with the DCS Corporation, 4027 Col Glenn Hwy, Dayton, OH 45431, USA (e-mail: {\tt\small msngupta@gmail.com}).

David Casbeer is with the Control Science Center, Air Force Research Laboratory, Wright-Patterson Air Force Base, OH 45433 USA (e-mail:
{\tt\small david.casbeer@us.af.mil}).

DISTRIBUTION STATEMENT A. Approved for public release. Distribution is unlimited. AFRL-2024-2505; Cleared 05/08/2024.
}
}

\author{Deepak Prakash Kumar, \IEEEmembership{Member, IEEE}, Swaroop Darbha, \IEEEmembership{Fellow, IEEE}, Satyanarayana Gupta Manyam, \IEEEmembership{Senior Member, IEEE}, David W. Casbeer, \IEEEmembership{Senior Member, IEEE}}

\markboth{IEEE Transactions on Robotics}%
{Kumar \MakeLowercase{\textit{et al.}}: A New Approach to Motion Planning in 3D for a Dubins Vehicle: Special Case on a Sphere}


\maketitle

\bstctlcite{IEEEexample:BSTcontrol}

\begin{abstract}
In this article, a new model for 3D motion planning, applicable to aerial vehicles, is proposed to connect an initial and final configuration subject to pitch rate and yaw rate constraints. 
The motion planning problem for a curvature-constrained vehicle over the surface of a sphere is identified as an intermediary problem to be solved, and it is the focus of this paper. In this article, the optimal path candidates for a vehicle with a minimum turning radius $r$ moving over a unit sphere are derived using a phase portrait approach. We show that the optimal path is $CGC$ or concatenations of $C$ segments through simple proofs, where $C = L, R$ denotes a turn of radius $r$ and $G$ denotes a great circular arc. We generalize the previous result of optimal paths being $CGC$ and $CCC$ paths for $r \in \left(0, \frac{1}{2} \right]\bigcup\{\frac{1}{\sqrt{2}}\}$ to  
 $r \leq \frac{\sqrt{3}}{2}$ to account for vehicles with a larger $r$. 
We show that the optimal path is $CGC, CCCC,$ for $r \leq \frac{1}{\sqrt{2}},$ and $CGC, CC_\pi C, CCCCC$ for $r \leq \frac{\sqrt{3}}{2}.$
Additionally, we analytically construct all candidate paths and provide the code in a publicly accessible repository.
\end{abstract}

\begin{IEEEkeywords}
Aerial systems: applications, Dubins vehicle, optimization and optimal control, 3D motion and path planning.
\end{IEEEkeywords}

\section{Introduction}

\IEEEPARstart{D}{ue} to the substantially increasing civilian and military applications of curvature-constrained vehicles, such as fixed-wing unmanned aerial vehicles, path (or motion) planning for such vehicles has tremendous applications. Path planning involves identifying a sufficient list of candidate optimal paths for the vehicle to travel from one position and orientation, referred to as configuration, to another at the cheapest cost. Curvature-constrained vehicles represent a broad class of vehicles that have a constraint on the rate of change of their heading angle and need to maintain a minimum airspeed. A popular method for capturing the constraints of such a vehicle is by modeling the vehicle as a Dubins vehicle, wherein the vehicle is assumed to travel at a constant (unit) speed and have a minimum turning radius $r$. The Dubins model is a kinematic model used to generate the path for the vehicle to connect an initial and a final configuration in 2D plane \cite{Dubins}; the obtained path is then tracked by the vehicle using a controller \cite{small_unmanned_aircraft, tracking_dubins_path_uav}.

The Dubins model has been extensively explored in the literature for the path planning of such vehicles moving at a constant altitude (or a plane). In \cite{Dubins}, the path planning problem for a Dubins vehicle moving on a plane, referred to as the classical Markov-Dubins problem, was solved. The author showed that the optimal path to travel from one configuration to another is of type $CSC, CCC,$ or a degenerate path\footnotemark of the same. Here, $C = L\; \text{or} \;R$ denotes a left or right turn arc of minimum turning radius, and $S$ denotes a straight line segment. A variation of the path planning problem, wherein the vehicle moves forward or backward, known as the Reeds-Shepp vehicle, was studied in \cite{Reeds_Shepp}.
\footnotetext{A degenerate path denotes a subpath of the considered path. For example, a degenerate path of $CSC$ and $CCC$ paths is of type $C,$ $S,$ $CS,$ $SC,$ and $CC$.}

However, in both \cite{Dubins} and \cite{Reeds_Shepp}, the results were obtained without utilizing Pontryagin's Maximum Principle (PMP) \cite{PMP}. The same problems were later studied using PMP to simplify the proofs \cite{Shortest_path_synthesis_Boissonat, sussman_geometric_examples}. Furthermore, phase portraits have also been employed in recent studies to systematically characterize the optimal paths by partitioning the analysis into a fewer cases \cite{monroy, weighted_Markov_Dubins}. 

While many variants exist in the literature considering the vehicle to move on a plane \cite{sinistral/dextral, dubins_circle, weighted_Markov_Dubins}, the path planning problem from one configuration to another in 3D or other surfaces is less explored. A survey of the same follows.


\subsection{Planning in 3D}

Motion planning in 3D is an active area of interest due to its extensive applications for aerial vehicles, underwater vehicles, and robots \cite{3D_underwater, motion_planning_two_3D_Dubins_vehicles}. Similar to the motion planning over a plane, simple kinematic models are considered to generate paths in 3D. It should be noted that while specifying the heading angle suffices to describe the orientation of the vehicle in 2D, one needs to specify the heading angle and the plane containing the vehicle for the 3D problem to uniquely define its orientation. Using the generated path in 3D, lower-level controllers are used to guide the vehicle to travel over the generated paths using the kinematic model, such as demonstrated in \cite{path_generation_tracking_3D}. 
In addition, since the Dubins trajectories can be computed efficiently due to the existence of analytical solutions, they can be used for planning in complex environments as well. For instance, in \cite{rick_lind}, Rapidly-exploring Random Tree (RRT) was used to construct a 3D path for an aerial vehicle in the presence of static obstacles. Here, the planar Dubins problem solution and its free terminal variant \cite{bui_free_terminal}, wherein the final heading angle is free, were used to connect the projected configurations on a plane, and the elevation was calculated to attain the desired altitude. In particular, the free terminal variant of the 2D Dubins problem was used for the tree expansion steps (to connect intermediary locations), with a final 2D Dubins curve to connect the last node in the tree to the final configuration. A similar combination of RRT with the planar Dubins solution was utilized in \cite{path_planning_3D_Dubins_saripalli}, wherein motion planning in 3D in the presence of static and dynamic obstacles was performed. 
The authors demonstrated the application of this combination to obtain a feasible solution through experiments for different static and dynamic obstacle scenarios using an ARDrone. 




The earliest analytical exploration of the general 3D Dubins problem to the best of our knowledge traces to the work by Sussmann in \cite{sussman_3D}. In \cite{sussman_3D}, the author addressed a curvature-constrained 3D Dubins problem, wherein the goal was to connect given initial and final locations and heading directions. The author showed that the optimal path is either a helicoidal arc, or a path of the form $CSC, CCC,$ or a degenerate path. However, unlike the 2D problem, infinitely many $CSC$ paths exist in 3D since the plane containing the $C$ segments can be arbitrarily chosen to connect the initial and final locations and heading directions; hence, efficient path construction for the $CSC$ path has been an active area of interest.

In \cite{optimal_geometrical_path_in_3D}, the authors constructed a $CSC$ path through a geometrical and numerical approach for locations that are spaced sufficiently far apart to connect the given initial and final configurations. The numerical approach for constructing the $CSC$ path was improved upon in \cite{optimal_path_planning_aerial_vehicle_3D}, wherein the authors formulated a nonlinear optimization problem to construct a feasible solution. The proposed formulation, wherein the control input at discrete time steps serves as decision variables, was solved using a multiple-shooting method.
The numerical generation of a $CSC$ path was improved upon in \cite{reparametrization_3D_Dubins}, wherein the authors parametrized $CSC$ paths using two parameters, and numerically optimized over the parameters. The authors showed an improvement by showing additional valid $CSC$ paths being generated through their approach, particularly in cases wherein the initial and final locations are close. The $CSC$ path construction was alternately viewed in \cite{analytic_solution_3D} as an inverse kinematics problem for a robotic manipulator with five degrees of freedom. The authors derived analytical solutions for the path's parameters to reduce the computation time for constructing the path.



An alternate approach for the 3D problem was considered in \cite{time_optimal_paths_Dubins_airplane}, wherein the authors extend the Dubins car model~\cite{Dubins} to an airplane model in 3D. The airplane was modeled with two controllable inputs: one for the yaw rate, and another for the rate of change of the altitude.
The authors showed that the optimal path contains turns of minimum turning radius (corresponding to the bounded yaw rate), straight line segments, or a Dubins path with a certain length.
Furthermore, the authors generate feasible solutions depending on three modes of the vehicle depending on the altitude difference: low altitude, medium altitude, and high altitude. Here, additional turn segments are introduced to allow the vehicle to attain the desired final altitude, depending on the mode. The proposed model in \cite{time_optimal_paths_Dubins_airplane} was modified in \cite{Dubins_airplane_fixed_wing_UAVs} to more accurately represent the kinematics of the vehicle by considering constraints on the pitch angle of the vehicle. However, similar segments were used in \cite{Dubins_airplane_fixed_wing_UAVs} to connect a location and heading angle to another in 3D. Contrary to \cite{time_optimal_paths_Dubins_airplane}, the authors used helical segments for medium and high altitude modes for the vehicle to aid in the vehicle attaining the desired altitude. Furthermore, the authors demonstrated the benefit of such kinematics-based path construction for the guidance of UAVs by using a six-degree-of-freedom model derived in \cite{small_unmanned_aircraft}, and utilizing a vector-field-based guidance law, based on \cite{goncalves}, to track the path.

In recent studies, the 3D motion planning problem has also been studied by constructing the Dubins path on a plane and varying the turning radius to ensure that the desired altitude can be obtained. In \cite{minimal_3D_Dubins_path_bounded_curvature_pitch}, the authors consider a model with two constraints for the vehicle: a curvature constraint, and a minimum and maximum pitch angle constraint. The authors decouple the $CSC$ path construction to connect the given initial and final configuration into a horizontal and vertical component. Finally, through iterative optimization, the path is combined into a single feasible path. The proposed decoupling approach was improved upon in \cite{finding_3D_dubins_paths_pitch_angle_nonlinear_optimization}, wherein the decoupled approach to construct the solution in \cite{minimal_3D_Dubins_path_bounded_curvature_pitch} was used as an initial solution for solving a non-linear optimization problem. To this end, the final path is constructed using small segments such that the boundary conditions corresponding to the locations, heading vectors, and pitch angles are satisfied.



From the reviewed studies, it can be observed that the complete vehicle configuration, which includes the position and 3D orientation, which can be described by a rotation matrix, has not been considered for aerial vehicles. In previous studies, the orientation of the vehicle has predominantly been represented using a heading angle and/or a pitch angle. However, these two angles do not uniquely capture the orientation of the vehicle since the plane containing the vehicle is not uniquely specified. The study in \cite{towards_finding_shortest_paths_3D_rigid_bodies} considers the complete configuration for a general robot, wherein the authors employ a numerical search technique to find two-segment and three-segment paths to connect the initial and final configurations. However, the connection between the model considered in this paper and the kinematics of an aerial vehicle is not evident. To this end, we propose an alternate approach in which the 3D motion planning problem can be viewed.

\subsection{Proposed model and connection to motion planning on sphere}

Unlike motion planning in 2D, wherein the configuration can be specified using a location $(x, y) \in \mathbb{R}^2$ and a heading angle $\psi \in [0, 2\pi),$ the configuration in 3D is specified using a location vector $\mathbf{X},$ and three vectors $\mathbf{T}, \mathbf{Y},$ and $\mathbf{U}$ for its orientation. Here, $\mathbf{T}$, which is the tangent vector, represents the longitudinal direction of the vehicle, $\mathbf{Y},$ which is a tangent-normal vector, represents the lateral direction of the vehicle (chosen to be the direction to the left of the vehicle), and $\mathbf{U} := \mathbf{T} \times \mathbf{Y}$ is the surface-normal vector. Noting that the vectors $\mathbf{T}, \mathbf{Y},$ and $\mathbf{U}$ are unit and are mutually orthogonal, the orientation of the vehicle is specified by a rotation matrix $\mathbf{R} = \begin{bmatrix}
    \mathbf{T} & \mathbf{Y} & \mathbf{U}
\end{bmatrix}$. In this regard, the goal of motion planning in 3D is to connect a given initial configuration, specified by location $\mathbf{X}_i$ and orientation $\mathbf{R}_i,$ and a final configuration, specified by $\mathbf{X}_f$ and $\mathbf{R}_f$ with the least cost. In this article, the cost of interest is the distance or time taken to travel from one configuration to another. A depiction of the initial and final configurations of the vehicle is shown in Fig.~\ref{fig: osculating_spheres}.

\begin{figure}[htb!]
    \centering
    \includegraphics[width = \linewidth]{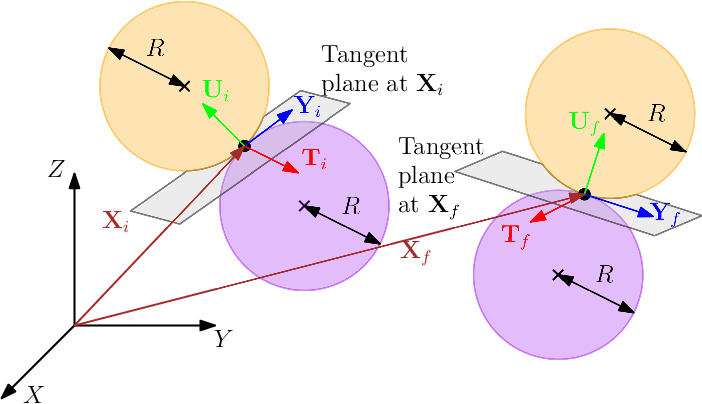}
    \caption{Depiction of osculating spheres at initial and final configuration corresponding to the pitch rate constraints}
    \label{fig: osculating_spheres}
\end{figure}

To address this path planning problem, an appropriate mathematical model is necessary to capture the motion constraints. To this end, we consider two motion constraints for the vehicle: a bounded pitch rate, and a bounded yaw rate. The bounded pitch rate and bounded yaw rate yield motion primitives for the vehicle, as shown in Fig.~\ref{fig: optimal_segments}. In this figure, pure pitch motion of the vehicle with the yaw rate set to zero yields great circular arcs on a sphere above the vehicle (along $\mathbf{U}_0$) and below the vehicle (along $-\mathbf{U}_0$), whereas pure yaw motion, wherein the pitch rate is set to zero, yields great circular arcs corresponding to left and right motion, denoted by $L_p$ and $R_p,$ respectively, on spheres whose centers lie on the $\mathbf{Y}_0$ axis. Furthermore, concurrent pitch and yaw motions with the values at the bounds yield circular arcs denoted by $L_{si}$ and $R_{si}$ corresponding to ascent motion and $L_{so}$ and $R_{so}$ for the descent motion. These segments also lie on the spheres corresponding to the pitch motion (whose centers are on the $\mathbf{U}_0$ axis) and the sphere corresponding to the yaw motion (whose centers are on the $\mathbf{Y}_0$ axis).

\begin{figure}[htb!]
    \centering
    \subfloat[Segments on spheres corresponding to max. pitch rate]{\includegraphics[width = 0.435\linewidth]{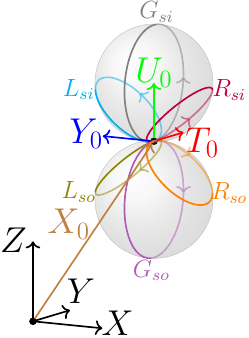}\label{subfig: sphere_segments}}
    \hfil
    \subfloat[Segments on spheres corresponding to max. yaw rate (and straight line segment)]{\includegraphics[width = 0.51\linewidth]{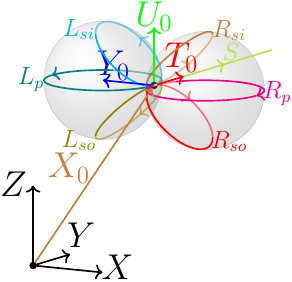}\label{subfig: planar_segments}}
    \caption{Visualization of segments}
    \label{fig: optimal_segments}
\end{figure}

Hence, it can be observed that the optimal path connecting the initial and final configurations can contain portions of its path on a sphere. However, for the path connecting the initial and final configurations to be optimal, the portion of the path lying on the sphere must be optimal as well. To this end, it is imperative to completely characterize the optimal path for a Dubins vehicle moving on the surface of a sphere.

\begin{remark}
Formally, a rotation minimizing frame, proposed by Bishop, can be used to describe the 3D model \cite{Bishop1975}. By the fundamental theorem of space curves, two scalar functions, curvature $\kappa(s)$ and torsion $\tau(s)$, uniquely determine the curve in terms of arc length $s$~\cite{MathWorldFTSC}. In the Bishop frame context, two ``Bishop curvatures" $(k_1(s), k_2(s))$~\cite{Bishop1975,FaroukiSpringerRMF} are used, and ``roll'' (rotation about $\mathbf{T}$) is set to zero by construction. Using the Bishop frame, the 3D motion planning problem can be posed as
\begin{align*}
    \min_{\mathbf{X}(s)} L &= \int_0^L ds,\\
    &\text{subject to}\\
    \frac{d\mathbf{X}}{ds}(s)=\mathbf{T}(s),& \quad
    \frac{d\mathbf{T}}{ds}=k_1(s)\mathbf{Y}(s) + k_2(s)\mathbf{U}(s),\\
    \frac{d\mathbf{Y}}{ds}(s) = -k_1(s)\mathbf{T}(s), &\quad
    \frac{d\mathbf{U}}{ds}(s) = -k_2(s)\mathbf{T}(s),
\end{align*}
with $|k_1|\le M_1,\ |k_2|\le M_2$. Specifically, curvature $k_1$ governs yaw motion, while $k_2$ governs pitch motion. The motion primitives generated by setting $k_1 = \pm M_1$ and $k_2 = 0$ correspond to the $L_p$ and $R_p$ segments (see Fig.~\ref{fig: optimal_segments}). Setting $k_2 = \pm M_2$ with $k_1 = 0$ yields the $G_{si}$ and $G_{so}$ segments. When both $k_1$ and $k_2$ simultaneously achieve their maximum absolute values, one of $L_{si}$, $R_{si}$, $L_{so}$, or $R_{so}$ is obtained.\footnotemark
\end{remark}

\footnotetext{For a more detailed discussion regarding the model, refer to Appendix~\ref{appsubsect: 3D_model_details}.}

\begin{remark}
Allowing additional ``roll control'' (rotation about $\mathbf{T}$) affects the vehicle's orientation but not the geometric path; the curve's geometry is completely captured by the curvature vector in the $\mathbf{Y}$-$\mathbf{U}$ plane, while roll is an extra attitude degree of freedom. This two-control curvature model is well suited to platforms and planning layers where the path geometry is primary and roll is either unavailable, negligible, or managed by a lower-level controller (as often seen in ground vehicles, underwater vehicles, and fixed-wing aircraft).
\end{remark}

\subsection{Planning on a sphere}

Motion planning for a Dubins vehicle on a sphere entails determining the optimal path to travel from one configuration to another on a sphere.\footnotemark\, Motion planning on a sphere has been reasonably explored in the literature.

\footnotetext{A different notation is used for the frame for the sphere problem compared to the 3D problem since the chosen notation is more intuitive for the sphere problem. In particular, the center of the sphere is chosen to be the origin, and hence, $\mathbf{X}$ denotes the location on the vehicle on the sphere. The choice of $\mathbf{T}$ automatically fixes $\mathbf{N},$ which is defined to be $\mathbf{X} \times \mathbf{T},$ unlike the case for the 3D problem. More details regarding the $\mathbf{X}, \mathbf{T},$ and $\mathbf{N}$ vectors are provided in the following section.}


In \cite{monroy}, path planning on a Riemmanian surface for a curvature-constrained vehicle was considered. In particular, the author showed that the same candidate paths as the Dubins result on a plane were obtained for a sphere, wherein the vehicle's minimum turning radius is $r = \frac{1}{\sqrt{2}}$. The motion planning problem on a sphere was recently studied in \cite{3D_Dubins_sphere}, wherein $r$ was considered to be a parameter for the vehicle. The authors showed that for normal controls\footnotemark, the optimal path is of type $CGC, CCC,$ or a degenerate path of the same for $r \leq \frac{1}{2}$. Here, $C = L, R$ denotes a tight left or right turn, and $G$ denotes a great circular arc. The same model was utilized in \cite{free_terminal_sphere} and \cite{generalization} for the motion planning problem wherein the vehicle must only attain a desired final location. The authors initially showed that the optimal path is of type $CC, CG,$ or a degenerate path of the same for $r \leq \frac{1}{2}$ in \cite{free_terminal_sphere}, and later showed that the optimal path types remain the same for $r \leq \frac{\sqrt{3}}{2}$ in \cite{generalization}. In \cite{time_optimal_synthesis_SO3} and \cite{time_optimal_control_satellite}, the authors considered time-optimal motion planning for a generic model on the Lie group $SO (3),$ which is the group of special orthogonal matrices, considering a single control input and two control input system, respectively. In particular, in \cite{time_optimal_synthesis_SO3}, the authors consider a system given by
\begin{align*}
    \dot{x} = x \left(f + u g \right),
\end{align*}
where $x \in SO(3),$ and $f$ and $g$ are elements of the Lie algebra of $SO (3),$ which is the set of skew-symmetric matrices. In this model, the authors assume that $f$ and $g$ are perpendicular to each other and $\|f \| = \cos{\alpha}, \|g \| = \sin{\alpha}.$ Here, the notion of perpendicularity and norm of skew-symmetric matrices are obtained by identifying each skew-symmetric matrix with a vector in $\mathbb{R}^3$ and utilizing the dot product of vectors in $\mathbb{R}^3$ to define the norm. The authors then show that for $\alpha \in \left(0, \frac{\pi}{4} \right),$ the number of segments in an optimal path is bounded by a function of $\alpha$. The generic model considered by the authors is also applicable to Dubins path planning since the models proposed in \cite{monroy, 3D_Dubins_sphere} can fit into the generic framework. However, fitting the models in \cite{monroy, 3D_Dubins_sphere} in the generic model restricts the radius of turn of the vehicle to be $\frac{1}{\sqrt{2}}$. Furthermore, the angle $\alpha$ reduces to $\frac{\pi}{4},$ for which the result in \cite{time_optimal_synthesis_SO3} corresponding to the bound on the number of segments is not applicable. 
\footnotetext{Abnormal controls and normal controls refer to the two branches of solutions obtained from employing PMP, a first-order necessary condition, to the optimal control problem. In abnormal controls, the adjoint variable corresponding to the objective function in the Hamiltonian is zero, whereas for normal controls, it is non-zero.}

The motion planning problem for a Dubins vehicle on a sphere is also applicable for modeling the motion of a high-speed aircraft moving at a constant altitude over the surface of the Earth. This problem has also been approached using a spherical coordinates approach as opposed to a moving frame approach that has been discussed so far, wherein a bounded lateral force is considered to be the control input in the model using spherical coordinates. However, in \cite{kumar2024equivalencedubinspathsphere}, it was shown that the two models are equivalent, and hence, the moving frame approach can be considered without loss of generality.

From the surveyed papers on motion planning on a sphere to travel from one configuration to another, it can be observed that the optimal path candidates for abnormal controls, i.e., when the adjoint variable corresponding to the integrand in the Hamiltonian is zero, is not known for $r \neq \frac{1}{\sqrt{2}}$. Furthermore, for normal controls, the optimal path candidates for $r \not\in \left(0, \frac{1}{2} \right] \bigcup \{\frac{1}{\sqrt{2}}\}$ is not known. Moreover, since the Dubins vehicle on a sphere corresponds to a single control input system, a phase portrait approach can be employed to obtain the results, particularly in \cite{3D_Dubins_sphere}, through simpler proofs. In this regard, the main contributions of this article are
\begin{itemize}
    \item Obtain the candidate optimal paths for normal controls for $r \in (\frac{1}{2},\frac{\sqrt{3}}{2}].$ In particular, the candidate optimal paths for normal controls are shown to be of type $CGC, CCCC,$ or a degenerate path of the same for $r \leq \frac{1}{\sqrt{2}},$ and of type $CGC, CCCCC,$ or a degenerate path of the same for $r \leq \frac{\sqrt{3}}{2}.$ Moreover, in paths with concatenations of $C$ segments, the angle of the intermediary $C$ segments is greater than $\pi$ radians.
    \item Obtain the candidate optimal paths for abnormal controls. In particular, the candidate optimal paths are shown to be of type $CC$ for $r \leq \frac{1}{\sqrt{2}}$ and $CC_\pi C$ for $r \leq \frac{\sqrt{3}}{2}.$\footnotemark\, The implication of this contribution are as follows: 
    \begin{itemize}
        \item For $r \leq \frac{1}{\sqrt{2}},$ no additional path needs to be considered since $CCC$ is a candidate path for normal controls. \item Noting that no additional path from abnormal controls needs to be considered for $r \leq \frac{1}{\sqrt{2}},$ we hence complete the argument for the optimal path being $CGC, CCC,$ or a degenerate path for $r \leq \frac{1}{2}$, building on \cite{3D_Dubins_sphere}.
        \item For $r \leq \frac{\sqrt{3}}{2},$ a $CC_\pi C$ path needs to be considered. To the best of our knowledge, for motion planning problems for minimizing time traveled, this is the first time abnormal controls yields a unique optimal path candidate. We also provide a numerical example later in the paper to show the existence of configurations for which this path is optimal.
    \end{itemize}
\end{itemize}
\footnotetext{$C_\pi$ denotes a $C$ segment with an arc angle of $\pi$ radians. We also remark here that while a $CC$ path for abnormal controls is subsumed in the list of paths obtained for normal controls, the $CC_\pi C$ path is not; hence, the $CC_\pi C$ path must be additionally considered as a candidate path.}
In addition to the previous two major contributions, another contribution is employing a phase portrait to provide a simpler, alternate approach confirming the results in \cite{3D_Dubins_sphere}. While \cite{monroy} employed a phase portrait approach to obtain the results for $r = \frac{1}{\sqrt{2}},$ we generalize the approach to systematically obtain the optimal path candidates for $r \in \left(0, \frac{\sqrt{3}}{2} \right].$

A summary of the candidate optimal paths from this paper is given in Table~\ref{tab: theorem_candidate_paths}.
\newcommand{\rowstrut}{\rule[-1.5ex]{0pt}{4.5ex}}
\begin{table}[htb!]
    \centering
    \caption{Candidate optimal paths for changing radius on unit sphere up to $\frac{\sqrt{3}}{2}$}
    \label{tab: theorem_candidate_paths}
    \begin{tabular}{cccc}
    \hline
    \multicolumn{1}{|c|}{\textbf{\begin{tabular}[c]{@{}c@{}}Common\\ candidate\\ paths\end{tabular}}} & \multicolumn{1}{c|}{\textbf{\begin{tabular}[c]{@{}c@{}}Additional\\ paths\end{tabular}}} & \multicolumn{1}{c|}{$r$} & \multicolumn{1}{c|}{\textbf{Ref.}} \\ \hline
    \multicolumn{1}{|c|}{\multirow{5}{*}{\begin{tabular}[c]{@{}c@{}}$C,$ $G,$\\ $CG,$ $GC,$ \\ $CGC,$ $CC,$\\ $C C_{\pi + \beta} C^\dagger$\end{tabular}}} & \multicolumn{1}{c|}{$-$} & \multicolumn{1}{c|}{$\rowstrut\leq \frac{1}{2}$} & \multicolumn{1}{c|}{$^*$, \cite{3D_Dubins_sphere}} \\ \cline{2-4} 
    \multicolumn{1}{|c|}{} & \multicolumn{1}{c|}{$C C_{\pi + \beta} C_{\pi + \beta} C^\dagger$} & \multicolumn{1}{c|}{$\rowstrut< \frac{1}{\sqrt{2}}$} & \multicolumn{1}{c|}{$^*$} \\ \cline{2-4} 
    \multicolumn{1}{|c|}{} & \multicolumn{1}{c|}{$-$} & \multicolumn{1}{c|}{$\rowstrut= \frac{1}{\sqrt{2}}$} & \multicolumn{1}{c|}{\cite{monroy}$^\ddagger$} \\ \cline{2-4} 
    \multicolumn{1}{|c|}{} & \multicolumn{1}{c|}{\begin{tabular}[c]{@{}c@{}}$C C_\pi C,$ $C C_{\pi + \beta} C_{\pi + \beta} C^\dagger,$ \\ $C C_{\pi + \beta} C_{\pi + \beta} C_{\pi + \beta} C^\dagger$\end{tabular}} & \multicolumn{1}{c|}{$\in \left(\frac{1}{\sqrt{2}}, \frac{\sqrt{3}}{2} \right]$} & \multicolumn{1}{c|}{$^*$} \\ \hline
    \multicolumn{4}{c}{$^* - $ Current paper, $^\dagger - \beta \in \left(0, \pi \right)$} \\
    \multicolumn{4}{c}{$^\ddagger - $ $CC_{\pi + \beta}C_{\pi + \beta}$ subpath is shown to be non-optimal}
    \end{tabular}
\end{table}

The impact of our contribution is presenting the importance of a parameter of the vehicle (or aircraft), which in this case is $r,$ on the optimal path types. This result has not been observed to the best of our knowledge in motion planning. Such a result not only has a significant impact on motion planning for high-speed aircraft moving over the Earth \cite{kumar2024equivalencedubinspathsphere} but also has a significant impact on the 3D motion planning problem. 
Hence, this result indicates that the solution to the general 3D motion planning problem will depend on the vehicle parameters, an observation that has not been seen in the literature.

\begin{remark}[\textbf{Applications}]
Our models and results have two main applications of particular relevance to aerial navigation. The 3D motion-planning model with pitch and yaw rate constraints directly applies to fixed-wing unmanned aerial vehicles, while the spherical motion-planning model is pertinent for high-speed aerial vehicles flying at approximately constant altitude over the Earth. In both cases, these vehicles are curvature-constrained, making our model an appropriate kinematic abstraction. The proposed approach provides reference trajectories for tracking by lower-level flight controllers. Moreover, the candidate optimal paths serve as building blocks for more complex 3D tasks previously studied mainly in 2D, such as minimum-time convergence to a circular orbit for surveillance missions~\cite{dubins_shortest_circle} and computation of shortest paths avoiding no-fly zones and obstacles~\cite{dubins_gate}.
\end{remark}

\begin{remark}
    While we introduce a new model for the general 3D path planning problem, this paper does not propose a methodology to solve the 3D problem or construct its solutions. The focus of this work is exclusively on the spherical motion planning problem, which serves as an important subproblem in the broader 3D context.
\end{remark}

\emph{Paper Organization:} Section~\ref{sec:mathformulation} presents the problem formulation and control model for Dubins path planning on the sphere. Section~\ref{sect: optimal_paths} discusses the breakdown of the analysis into two cases to be considered to determine the candidate optimal paths. Sections~\ref{sect: Case_1} and \ref{sect: Case_2} consider the two cases, and the optimal paths are derived using geometry and phase portrait analysis. Section~\ref{sect: results} details numerical examples and implementation insights. Section~\ref{sec:conclusion} provides concluding remarks.


\section{Problem Formulation and Hamiltonian Construction} \label{sec:mathformulation}

In this section, the mathematical formulation of the path planning problem, the construction of the Hamiltonian function to employ PMP, and the optimal control actions will be derived. The sequence of steps taken to this end are as follows:
\begin{itemize}
    \item First, the problem formulation for motion planning on the surface of a unit sphere will be presented using the model in \cite{3D_Dubins_sphere}. However, a difficulty faced in this model is to show the non-triviality condition for abnormal controls when the adjoint variable corresponding to the control action is identically zero.
    \item Hence, the considered model is equivalently represented in the framework presented in \cite{monroy}, wherein the optimal control problem is posed on a Lie group. Using this equivalence, PMP on a Lie group is employed similarly to \cite{monroy}.
    \item Applying PMP, it will be shown that the optimal control actions for abnormal controls correspond to $\pm U_{max},$ whereas for normal controls, the optimal control actions will be shown to be $0, \pm U_{max}.$
\end{itemize}
\begin{remark}
The setup of the problem in a Lie group and the corresponding optimal control actions were derived in \cite{kumar2024equivalencedubinspathsphere}. We recap the main results in this section for continuity.
\end{remark}

\begin{figure}[htb!]
    \centering
    \includegraphics[width=0.5\linewidth]{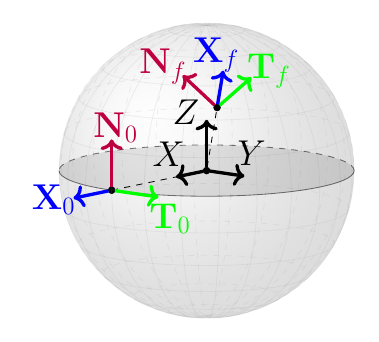}
    \caption{Initial and final configuration on sphere}
    \label{fig: ini_fin_config_sphere}
\end{figure}

The problem of determining the path of shortest length on a unit sphere connecting two configurations for a geodesic-curvature constrained Dubins vehicle can be written as the following variational problem \cite{3D_Dubins_sphere}:
\begin{align} \label{eq: original_obj_func}
    \min \int_0^L ds,
\end{align}
subject to
\begin{align} \label{eq: Sabban_frame_equations}
\begin{split}
    \frac{d \mathbf{X} (s)}{ds} &= \mathbf{T} (s), \quad \frac{d \mathbf{T} (s)}{ds} = -\mathbf{X} (s) + u_g (s) \mathbf{N} (s), \\
    \frac{d \mathbf{N} (s)}{ds} &= -u_g (s) \mathbf{T} (s),
\end{split}
\end{align}
and the boundary conditions
\begin{equation}
    {\mathbf R}(0) = I_3, \quad \quad {\mathbf R}(L) = R_f,
    \label{eq:optconstraints}
\end{equation}
with $u_g \in [-U_{max}, U_{max}],$ where $U_{max}$ is a design parameter\footnotemark\footnotetext{$U_{max}$ is a parameter that controls the rate of change of orientation of the vehicle. It will later be discussed that $U_{max}$ controls the minimum turning radius, which evaluates to be $r = \frac{1}{\sqrt{1 + U_{max}^2}}.$}. Here, $s$ denotes the arc length, $\mathbf{X}$ and $\mathbf{T}$ denote the location and tangent vector of the vehicle, and $\mathbf{N} := \mathbf{X} \times \mathbf{T}.$ These unit three vectors are mutually orthogonal and form a Sabban frame, as depicted in Fig.~\ref{fig: ini_fin_config_sphere}.\footnotemark\footnotetext{A different notation is used for the frame for the sphere problem compared to the 3D problem since the chosen notation is more intuitive for the sphere problem. In particular, the center of the sphere is chosen to be the origin, and hence, $\mathbf{X}$ denotes the location on the vehicle on the sphere. The choice of $\mathbf{T}$ automatically fixes $\mathbf{N},$ which is defined to be $\mathbf{X} \times \mathbf{T},$ unlike the case for the 3D problem. Hence, the orientation of the vehicle is given by $\mathbf{R} = \begin{bmatrix}
    \mathbf{X} & \mathbf{T} & \mathbf{N}
\end{bmatrix}$.}

\begin{remark}[\textbf{Model}]
Before proceeding with the analysis, we provide additional details on the model used. As in the classical 2D Dubins path planning, our model is kinematic; specifically, we neglect the vehicle's inertia. The simplest way to capture motion in 3D is via a rotation-minimizing frame, as introduced by Bishop (see Section~I.B). When restricted to motion on a sphere (by setting $k_1 = u_g,$ $k_2 = -1,$ and $\mathbf{U}_0 = \mathbf{X}_0$), this frame reduces to the Sabban frame model~\cite{3D_Dubins_sphere}, which is equivalent to the model proposed by Monroy~\cite{monroy}. We also note that in~\cite{kumar2024equivalencedubinspathsphere}, a model for a high-speed vehicle moving over the Earth as a particle subject to bounded lateral force was considered, and it was shown to be equivalent to the Sabban frame model.
\end{remark}


Noting the similarity in the model setup to Monroy \cite{monroy}, with $\epsilon$ set to $1$ in the model in \cite{monroy}, the approach of Monroy is adopted in this study, wherein the equations in \eqref{eq: Sabban_frame_equations} can be assembled into the equation
\begin{align} \label{eq: Lie_group_evolution}
    \frac{dg}{ds} = {\overrightarrow l}_1 \left(g (s) \right) - u_g (s) \overrightarrow{L}_{12} \left(g (s) \right),
\end{align}
where $g$ is an element of the Lie group $SO (3)$, where 
\begin{align*}
    g (s) = \begin{pmatrix}
        \textbf{X} (s) & \textbf{T} (s) & \textbf{N} (s)
    \end{pmatrix},
\end{align*}
and $s \rightarrow g (s)$ is a curve on the Lie group. In \eqref{eq: Lie_group_evolution}, $\overrightarrow{l}_1$ and $\overrightarrow{L}_{12}$ are left-invariant vector fields on the Lie group, whose value at the identity of the Lie group is given by
\begin{align*}
    l_1 = \begin{pmatrix}
        0 & -1 & 0 \\
        1 & 0 & 0 \\
        0 & 0 & 0
    \end{pmatrix}, \quad L_{12} = \begin{pmatrix}
        0 & 0 & 0 \\
        0 & 0 & 1 \\
        0 & -1 & 0
    \end{pmatrix}.
\end{align*}
A trajectory on the Lie group consists of a pair $\left(\hat{g} (s), u_g (s) \right),$ where $u_g$ is a measurable bounded function (which, in our case, is the geodesic curvature) with values in $[-U_{max}, U_{max}]$ and $\hat{g} (s)$ satisfies \eqref{eq: Lie_group_evolution}. Each solution $\left(\hat{g} (s), u_g (s) \right)$ projects then on the base manifold $SO (3)$, from which the corresponding trajectory of the vehicle is obtained to be along the curve $\hat{g} (s) e_1$. Here, $e_1$ denotes the unit vector $(1, 0, 0)^T;$ hence, $\hat{g} (s) e_1$ corresponds to the instantaneous location of the vehicle. The curve obtained has curvature $u_g$ and satisfies the boundary conditions in \eqref{eq:optconstraints} \cite{monroy}.

\begin{remark}
The left-invariant vector field $\overrightarrow{l}_1 \left(g (s) \right)$ and $\overrightarrow{L}_{12} \left(g (s) \right)$ can be expressed in terms of its value at the identity as
\begin{align*}
    \overrightarrow{l}_1 \left(g (s) \right) = g (s) l_1 = \begin{pmatrix}
        \textbf{T} (s) & -\textbf{X} (s) & \textbf{0}
    \end{pmatrix}, \\
    \overrightarrow{L}_{12} \left(g (s) \right) = g (s) L_{12} = \begin{pmatrix}
        \textbf{0} & -\textbf{N} (s) & \textbf{T} (s)
    \end{pmatrix}.
\end{align*}
\end{remark}


Similar to Monroy \cite{monroy}, PMP is applied for the symplectic formalism, wherein PMP is applied on the cotangent bundle of the Lie group ($T^* (G)$), which yields \cite{monroy}
\begin{align*}
    H (\zeta) = \zeta_0 + h_1 (\zeta) - u_g H_{12} (\zeta),
\end{align*}
where $\zeta$ is 
the integral curve on $T^* (G)$ \cite{monroy}. 
Here, $h_1$ and $H_{12}$ are functions on $T^* (G)$. In the above formulation, $\zeta_0$ does not depend on $u_g$ and can be taken as an arbitrary parameter. Hence, it can be normalized to $0$ or $-1$ \cite{monroy}. Therefore, the Hamiltonian can be rewritten as
\begin{align} \label{eq: Hamiltonian}
    H_{u_g, \lambda} (\zeta) = -\lambda + h_1 (\zeta) - u_g H_{12} (\zeta),
\end{align}
where $\lambda = \{0, 1\}.$ 



For this model, since the optimal control must pointwise maximize $H$ using PMP, the optimal control action $\kappa$ and the Hamiltonian when $H_{12} \neq 0$ are obtained as
\begin{align}
    \kappa (s) &:= -U_{max} \sgn(H_{12} (\zeta (s))), \\
    H_{\kappa, \lambda} (\zeta (s)) &= -\lambda + h_1 (\zeta (s)) - \kappa (s) H_{12} (\zeta (s)), \label{eq: Hamiltonian_rewritten}
\end{align}
where sgn denotes the signum function, which yields the sign of $H_{12}$ when $H_{12} \neq 0$ and is zero when $H_{12} = 0$.
\footnotetext{It should be noted that $\kappa$ is used to denote the optimal control action, whereas $u_g$ is used to denote any feasible control action. Both $\kappa$ and $u_g$ are measurable bounded functions with values in $[-U_{max}, U_{max}]$.}

Noting that the Hamiltonian expression is the same as given in \cite{monroy}, except for a change in the sign of the optimal curvature $\kappa$, the evolution of the functions $h_1, h_2,$ and $H_{12}$ are given by \cite{monroy}
\begin{align} \label{eq: adjoint_equations}
\begin{split}
    \dot{h}_1 (\zeta (s)) &= -\kappa (s) h_2 (\zeta (s)), \,\, \dot{H}_{12} (\zeta (s)) = -h_2 (\zeta (s)), \\
    \dot{h}_2 (\zeta (s)) &= H_{12} (\zeta (s)) + \kappa (s) h_1 (\zeta (s)).
\end{split}
\end{align}
Here, $h_2$ is the function corresponding to the Hamiltonian vector field $\overrightarrow{l}_2$, whose value at the identity of the Lie group is given by $l_2 = [L_{12}, l_1] = L_{12} l_1 - l_1 L_{12},$
since $\{l_1, l_2, L_{12}\}$ forms a basis for the Lie algebra of the Lie group $G$.\footnotemark 

\footnotetext{It should be noted here that $-H_{12}, h_2, h_1$ play the role of $A, B,$ and $C,$ respectively, in \cite{3D_Dubins_sphere}.}

\begin{figure*}[htb!]
    \centering
    \includegraphics[width = 0.65\linewidth]{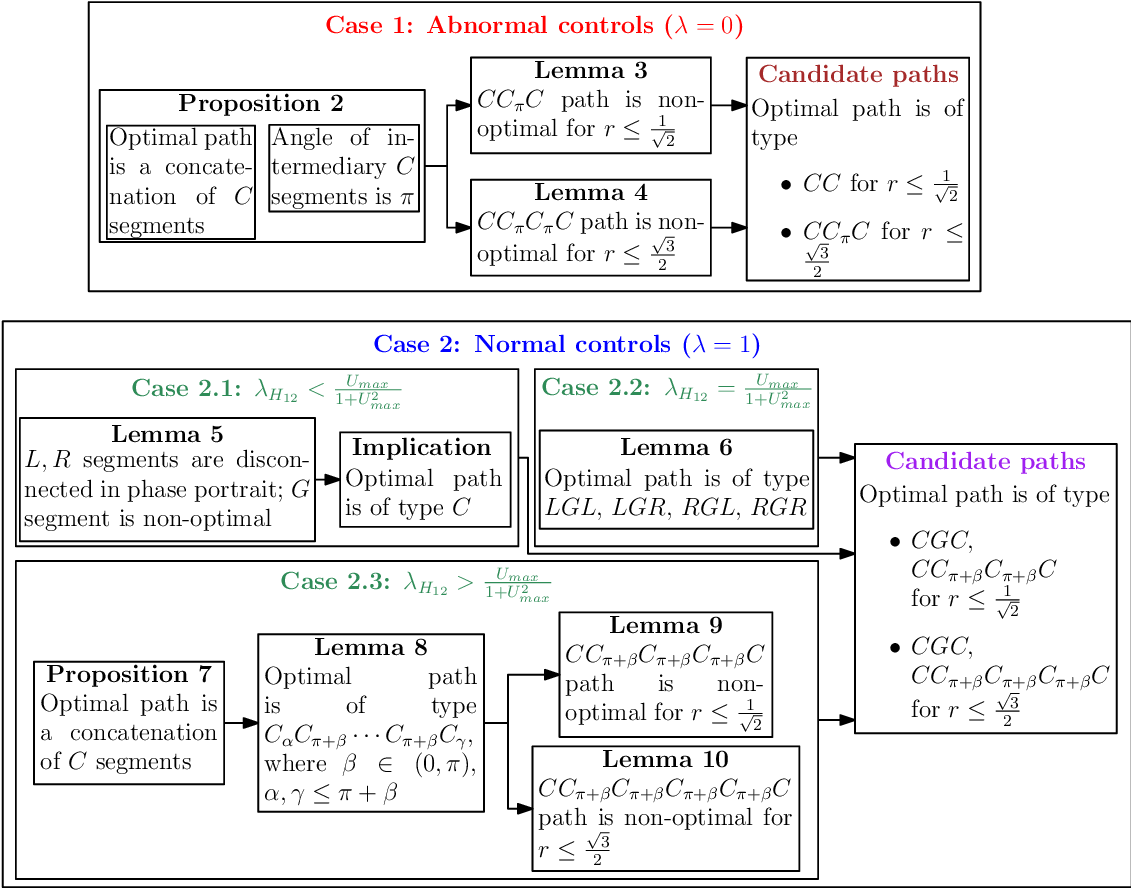}
    \caption{Overview of cases and results. (In this figure, whenever we refer to the optimal path being a particular type, the optimal path can be a degenerate path of the same as well. For example, for $\lambda = 1$ and $r \leq \frac{1}{\sqrt{2}},$ the optimal path can be a degenerate path of $CC_{\pi + \beta} C_{\pi + \beta} C,$ which includes $C, CC,$ $CC_{\pi + \beta}C$.)}
    \label{fig: overview_cases_results}
\end{figure*}

When $H_{12} \equiv 0,$ the following result is obtained (refer to \cite{kumar2024equivalencedubinspathsphere}).
\begin{lemma} \label{lemma: optimal_control_actions}
    If $H_{12} \equiv 0,$ then $\lambda$ cannot be zero; further, for $\lambda = 1,$ $\kappa \equiv 0.$
\end{lemma}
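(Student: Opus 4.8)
The plan is to combine the adjoint equations \eqref{eq: adjoint_equations} with two standard consequences of PMP for a free-terminal-time (minimum-length) problem: first, that the maximized Hamiltonian vanishes \emph{identically} along any optimal extremal, not merely that it is constant; and second, the non-triviality condition, namely that the pair $(\lambda, \zeta)$ is never zero. Since $\{l_1, l_2, L_{12}\}$ is a basis of the Lie algebra and $h_1, h_2, H_{12}$ are the corresponding momentum components of $\zeta$, the second fact is equivalent to saying that $(\lambda, h_1, h_2, H_{12})$ is never the zero vector.

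First I would unwind the hypothesis $H_{12} \equiv 0$ on an interval. Then $\dot H_{12} \equiv 0$, so the second equation of \eqref{eq: adjoint_equations} gives $h_2 \equiv 0$; hence $\dot h_2 \equiv 0$ as well, and the third equation of \eqref{eq: adjoint_equations} collapses to $\kappa h_1 \equiv 0$. The first equation then gives $\dot h_1 = -\kappa h_2 \equiv 0$, so $h_1$ is constant along the arc. Next, evaluating \eqref{eq: Hamiltonian_rewritten} with $H_{12} \equiv 0$ reduces it to $H_{\kappa,\lambda} = -\lambda + h_1$, and since this is identically zero, $h_1 \equiv \lambda$. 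If $\lambda = 0$, then $h_1 \equiv 0$, and together with $h_2 \equiv H_{12} \equiv 0$ this forces $(\lambda, h_1, h_2, H_{12}) \equiv 0$, contradicting the non-triviality condition; hence $\lambda = 1$. In that case $h_1 \equiv 1 \neq 0$, and $\kappa h_1 \equiv 0$ immediately yields $\kappa \equiv 0$, which completes the argument.

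The main obstacle is justifying the two PMP facts within the symplectic/Lie-group formulation of \cite{monroy}: that the maximized Hamiltonian is \emph{identically zero} — this is essential, since mere constancy of $H_{\kappa,\lambda}$ would only tell us $h_1$ equals some constant and would not rule out $\lambda = 0$ — which rests on $L$ being free together with the transversality condition; and that the vanishing of $\zeta$ is equivalent to the simultaneous vanishing of $h_1, h_2, H_{12}$, which rests on these being the components of the momentum in a basis of the Lie algebra. A minor point requiring care is that on the singular arc $\kappa$ is only a bounded measurable function, so $h_1, h_2, H_{12}$ are absolutely continuous and the differentiations above hold almost everywhere; this still suffices to conclude that the absolutely continuous function $h_1$ is constant and that $\kappa h_1 = 0$ a.e. Everything else is a short manipulation of \eqref{eq: adjoint_equations}.
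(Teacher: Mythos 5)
Your proof is correct and is exactly the intended argument: the paper itself defers this lemma to \cite{kumar2024equivalencedubinspathsphere}, but the chain $H_{12}\equiv 0 \Rightarrow h_2\equiv 0 \Rightarrow \kappa h_1\equiv 0$, combined with $H\equiv 0$ (free terminal length) giving $h_1\equiv\lambda$ and PMP non-triviality ruling out $\lambda=0$, is the standard route, and you correctly identify that non-triviality of $\zeta$ reduces to non-vanishing of $(h_1,h_2,H_{12})$ because $\{l_1,l_2,L_{12}\}$ is a basis of the Lie algebra. No gaps.
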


From this lemma, it follows that the optimal control actions are as follows:
\begin{align} \label{eq: optimal_control_inputs}
    \kappa (s) \equiv
    \begin{cases}
        - U_{max}, & H_{12} (\zeta (s)) > 0, \lambda \in \{0, 1 \} \\
        U_{max}, & H_{12} (\zeta (s)) < 0, \lambda \in \{0, 1\} \\
        0, & H_{12} (\zeta (s)) \equiv 0, \lambda = 1.
    \end{cases}
\end{align}
Henceforth, $H_{12} (\zeta (s))$ will be denoted as $H_{12} (s)$ for brevity.

\begin{remark}
The optimal control action $\kappa$ is such that $H \equiv 0$ in \eqref{eq: Hamiltonian_rewritten}, which follows as a consequence of utilizing PMP for the considered problem \cite{monroy}. For this class of time-optimal problems for autonomous systems (i.e., systems without explicit time dependence)~\cite{PMP_lecture_notes}, this condition is observed.
\end{remark}

\section{Characterization of Optimal Path} \label{sect: optimal_paths}

Using the obtained control actions, the optimal path is a concatenation of segments corresponding to $\kappa = -U_{max}, 0,$ and $U_{max}.$ Here, $\kappa (s) \equiv 0$ corresponds to an arc of a great circle, and $\kappa (s) = \pm U_{max}$ correspond to an arc of a small circular arc of radius $r = \frac{1}{\sqrt{1 + U_{max}^2}}$ \cite{3D_Dubins_sphere}. Henceforth, a great circular arc will be denoted as $G$, whereas an arc of a circle of radius $r$ corresponding to $\kappa = U_{max}$ and $\kappa = -U_{max}$ will be denoted as $L$ and $R,$ respectively. The three identified segments are depicted in Fig.~\ref{fig: turns_sphere}.

\begin{figure}[htb!]
    \centering
    \includegraphics[width = 0.5\linewidth]{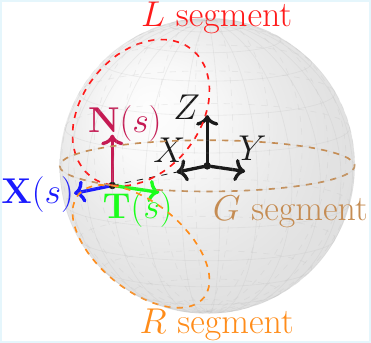}
    \caption{Turns on a sphere \cite{generalization}}
    \label{fig: turns_sphere}
\end{figure}

From the identified control inputs in \eqref{eq: optimal_control_inputs}, the optimal curvature $\kappa$ can be observed to be dependent on the scalar function $H_{12} (s).$ Therefore, the candidate optimal paths can be obtained by constructing the phase portrait of $H_{12} (s).$ To this end, an equation relating $H_{12} (s)$ and $\frac{d H_{12} (s)}{ds}$ is first obtained. 
We can first observe that $J := h_1^2 + h_2^2 + H_{12}^2$ is a constant over a trajectory. This can be shown by differentiating $J$ with respect to $s$ and using the expressions for the derivatives of $h_1, h_2,$ and $H_{12}$ given in \eqref{eq: adjoint_equations} \cite{monroy}. Noting that $\frac{dH_{12} (s)}{ds} = -h_2 (s)$ from \eqref{eq: adjoint_equations}, $\kappa (s) = -U_{max} \sgn{(H_{12} (s))}$ from \eqref{eq: optimal_control_inputs}, and $h_1 (s) = \lambda + \kappa (s) H_{12} (s)$ from the Hamiltonian $H$ given in \eqref{eq: Hamiltonian_rewritten} since $H \equiv 0,$ it follows that (using a similar procedure given in \cite{monroy})
\begin{align*}
    \left(\frac{d H_{12} (s)}{ds} \right)^2 &= J - h_1^2 (s) - H_{12}^2 (s) \\
    &= J - \left(\lambda - U_{max} |H_{12} (s)| \right)^2 - H_{12}^2 (s).
\end{align*}
The above equation can be rewritten as
\begin{align} \label{eq: phase_portrait}
\begin{split}
    f &:= \left(|H_{12} (s)| - \frac{\lambda U_{max}}{1 + U_{max}^2} \right)^2 + \left(\frac{1}{\sqrt{1 + U_{max}^2}} \frac{d H_{12} (s)}{ds} \right)^2 \\
    &= \underbrace{\frac{J}{1 + U_{max}^2} - \frac{\lambda^2}{\left(1 + U_{max}^2 \right)^2}}_{\lambda_{H_{12}}^2}.
\end{split}
\end{align}
Here, since $J$ and $\lambda$ are constant for a trajectory, $\lambda_{H_{12}}$ is a constant.

Having derived the closed-form evolution of $H_{12},$ it is desired to obtain the candidate optimal paths. For this purpose, the analysis to obtain the candidate optimal paths can be broken down into two cases depending on the value of $\lambda:$
\begin{itemize}
    \item Case 1: $\lambda = 0.$ In this case, the solutions obtained are abnormal since they are independent of the integrand (which equals $1$).
    \item Case 2: $\lambda > 0$. In this case, the solutions obtained are normal. Without loss of generality, $\lambda$ can be set to one.
\end{itemize}
Three subcases are solved for the case of $\lambda = 1$; these are
\begin{itemize}
    \item Case 2.1: $\lambda_{H_{12}} < \frac{U_{max}}{1 + U_{max}^2},$
    \item Case 2.2: $\lambda_{H_{12}} = \frac{U_{max}}{1 + U_{max}^2},$ and
    \item Case 2.3: $\lambda_{H_{12}} > \frac{U_{max}}{1 + U_{max}^2}.$
\end{itemize}

The overview of the cases considered and the results obtained from each of the cases is provided in Fig.~\ref{fig: overview_cases_results}. 
In Section~\ref{sect: Case_1}, candidate optimal paths for Case~1 are derived, whereas Section~\ref{sect: Case_2} addresses Case~2.

\begin{remark}
From \eqref{eq: phase_portrait}, $\lambda_{H_{12}}^2 \geq 0$ since it is the sum of squares of real-valued expressions, and is constant as $J$ and $\lambda$ are constant. Thus, $\lambda_{H_{12}}$ is a real constant. We may set $\lambda_{H_{12}} \geq 0$ without loss of generality; the justification for the cases $\lambda=0$ and $\lambda=1$ is provided in the corresponding sections.
\end{remark}

\section{Candidate optimal paths for Case 1: $\lambda = 0$} \label{sect: Case_1}

Before proceeding, the phase portrait of $H_{12}$ for the case of $\lambda =0$ is described.
If $\lambda = 0,$ \eqref{eq: phase_portrait} reduces to
\begin{align} \label{eq: phase_portrait_e_0}
    f := H_{12}^2 (s) + \frac{1}{1 + U_{max}^2} \left(\frac{dH_{12} (s)}{ds} \right)^2 = \lambda_{H_{12}}^2.
\end{align}
The second-order ordinary differential equation in \eqref{eq: phase_portrait_e_0} is equivalent to the characteristic equation for a mass-spring system with force, for which the analytical solution is given by
\begin{align}
    H_{12} (s) &= \lambda_{H_{12}} \sin{\left(\sqrt{1 + U_{max}^2} s - \phi_{H_{12}} \right)}, \label{eq: solution_H12_lambda_0} \\
    \frac{dH_{12} (s)}{ds} &= \lambda_{H_{12}} \sqrt{1 + U_{max}^2} \cos{\left(\sqrt{1 + U_{max}^2} s - \phi_{H_{12}} \right)}, \label{eq: solution_dH12ds_lambda_0}
\end{align}
where $\phi_{H_{12}}$ is the phase angle.


It should be noted that since $H_{12} (s) \not\equiv 0$ from Lemma~\ref{lemma: optimal_control_actions} for $\lambda = 0$, $\lambda_{H_{12}} \neq 0$. Moreover, due to continuity of $H_{12} (s)$ and $\frac{d H_{12} (s)}{ds},$ the 
parameter $\phi_{H_{12}}$ for the two solutions (for $H_{12} < 0$ and $H_{12} > 0$) is equal or a multiple of $2 \pi$ apart.

\begin{remark}
If $\lambda_{H_{12}} < 0$, this can be equivalently reparameterized with $\lambda_{H_{12}} > 0$ by offsetting the phase $\phi_{H_{12}}$ by $\pi$ radians. Hence, $\lambda_{H_{12}} >0$ can be considered without loss of generality.
\end{remark}

Having obtained the closed-form solution for $H_{12} (s)$ and $\frac{dH_{12} (s)}{ds},$ it is desired to characterize the inflection point, i.e., when $H_{12} (s) = 0.$ 
Since $H_{12} (s)$ and $\frac{dH_{12} (s)}{ds}$ are continuous functions of $s$ from \eqref{eq: solution_H12_lambda_0} and \eqref{eq: solution_dH12ds_lambda_0}, when $H_{12} \rightarrow 0,$  $\lim_{H_{12} (s) \rightarrow 0} \frac{dH_{12} (s)}{ds} = \pm \lambda_{H_{12}} \sqrt{1 + U_{max}^2}.$ Since $\lambda_{H_{12}} > 0$, $H_{12} = 0$ occurs only transiently, i.e., $H_{12} \not\equiv 0$ (as can also be observed from the phase portrait in Fig.~\ref{fig: phase_portrait_H12_lambda_0}). Thus, the points $\left(H_{12}(s), \frac{dH_{12}(s)}{ds}\right) = \left(0, \pm \lambda_{H_{12}} \sqrt{1 + U_{max}^2}\right)$ correspond to inflection points between $L$ and $R$ segments.

Using the definition of the function $f$ in \eqref{eq: phase_portrait_e_0} and the previously discussed observations, the phase portrait obtained for $\lambda = 0$ is shown in Fig.~\ref{fig: phase_portrait_H12_lambda_0}.

\begin{figure}[htb!]
    \centering
    \includegraphics[width = 0.7\linewidth]{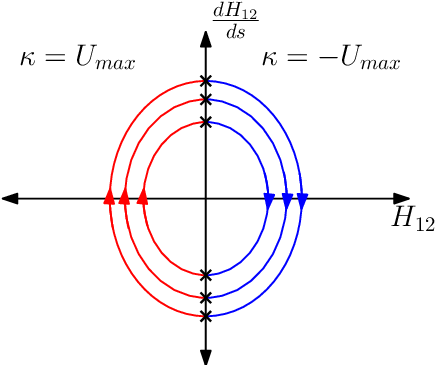}
    \caption{Phase portrait of $H_{12}$ for $\lambda = 0$}
    \label{fig: phase_portrait_H12_lambda_0}
\end{figure}

\begin{remark}
A non-trivial path is defined as a path in which all segments have a non-zero length.
\end{remark}

\begin{remark}
    A $C$ segment is said to be completely traversed if $H_{12} (s_1) = 0, H_{12} (s_2) = 0,$ and $H_{12} (s) > 0$ for  $C = R$ and $H_{12} (s) < 0$ for $C = L.$ Here, $s_1$ and $s_2$ denote the arc length corresponding to the start and end of the $C$ segment, respectively, and $s \in \left(s_1, s_2 \right).$ For example, the phase portrait in Fig.~\ref{fig: phase_portrait_H12_lambda_0} corresponds to a complete $L$ segment traversal if $H_{12}$ starts at a cross-mark with $\frac{dH_{12}}{ds} < 0$, follows the red half-ellipse, and ends at the corresponding cross-mark where $\frac{dH_{12}}{ds} > 0$.
\end{remark}

Given the phase portrait, the following proposition characterizing the optimal path is provided.

\begin{proposition}
    For $\lambda = 0,$ the optimal path is a concatenation of $C$ segments. Further, if a $C$ segment is traversed completely, then the arc angle of such a $C$ segment is $\pi$ radians.
\end{proposition}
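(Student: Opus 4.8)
The plan is to read the structure of optimal paths for $\lambda = 0$ directly off the phase portrait in Fig.~\ref{fig: phase_portrait_H12_lambda_0}, which by \eqref{eq: phase_portrait_e_0} consists of concentric ellipses $H_{12}^2 + \frac{1}{1+U_{max}^2}\left(\frac{dH_{12}}{ds}\right)^2 = \lambda_{H_{12}}^2$ with $\lambda_{H_{12}} > 0$ (the case $\lambda_{H_{12}} = 0$, i.e. $H_{12} \equiv 0$, being excluded by Lemma~\ref{lemma: optimal_control_actions}). First I would observe that since $\lambda_{H_{12}} > 0$ the trajectory never rests at the origin, so $H_{12}(s)$ can only equal $0$ at isolated instants, and at each such instant $\left|\frac{dH_{12}}{ds}\right| = \lambda_{H_{12}}\sqrt{1+U_{max}^2} \neq 0$, as already argued in the text preceding the proposition. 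By \eqref{eq: optimal_control_inputs}, $H_{12} > 0$ forces $\kappa = -U_{max}$ (an $R$ segment) and $H_{12} < 0$ forces $\kappa = +U_{max}$ (an $L$ segment), and the third branch $\kappa = 0$ is unavailable because $\lambda = 0$. Hence on every maximal open interval where $H_{12}$ has constant sign the path executes a single $C$ segment, and the only events separating consecutive segments are the transversal zero-crossings of $H_{12}$; this establishes that the optimal path is a concatenation of $C$ segments, alternating between $L$ and $R$.

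Next I would pin down the arc angle of a completely traversed $C$ segment. Suppose a segment runs on $[s_1, s_2]$ with $H_{12}(s_1) = H_{12}(s_2) = 0$ and $H_{12}$ of one sign in between (say $H_{12} > 0$, so $\kappa \equiv -U_{max}$ on the interior). On this interval $\kappa$ is constant, so from the closed-form solution \eqref{eq: solution_H12_lambda_0}, $H_{12}(s) = \lambda_{H_{12}}\sin\!\left(\sqrt{1+U_{max}^2}\,s - \phi_{H_{12}}\right)$; the two consecutive zeros of a sine with the same sign of the lobe between them are exactly a half-period apart, so $\sqrt{1+U_{max}^2}\,(s_2 - s_1) = \pi$, giving arc length $s_2 - s_1 = \frac{\pi}{\sqrt{1+U_{max}^2}}$. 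Finally I would convert this arc length into an arc angle: a $C$ segment is a circle of radius $r = \frac{1}{\sqrt{1+U_{max}^2}}$ on the unit sphere, and its arc angle (the angle subtended at its own center) is $\frac{s_2 - s_1}{r} = \frac{\pi/\sqrt{1+U_{max}^2}}{1/\sqrt{1+U_{max}^2}} = \pi$ radians. The same computation, with signs flipped, handles a completely traversed $L$ segment.

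The main obstacle is making precise what "arc angle" means and justifying the conversion from the arc-length half-period to an angle of $\pi$: one must be careful that the relevant angle is the one measured from the center of the small circle traced on the sphere (not, e.g., the central angle at the sphere's center), and invoke the fact — recorded earlier when the segments were introduced and $r = \frac{1}{\sqrt{1+U_{max}^2}}$ was identified — that a $C$ arc of arc length $\ell$ has arc angle $\ell / r$. A secondary point worth a sentence is that a non-completely-traversed $C$ segment (an initial or terminal arc, where $H_{12}$ does not vanish at one endpoint) may have arc angle strictly less than $\pi$, which is why the claim is stated only for segments traversed completely; the phase-portrait picture makes this transparent, since a full traversal corresponds to a half-loop of the ellipse from one axis crossing to the next, while a partial traversal is a shorter arc of that ellipse.
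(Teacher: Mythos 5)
Your proposal is correct and follows essentially the same route as the paper: read off the concatenation-of-$C$-segments structure from the $\lambda=0$ phase portrait (using Lemma~\ref{lemma: optimal_control_actions} to get $\lambda_{H_{12}}>0$ and transversal zero-crossings), then use the sinusoidal closed form \eqref{eq: solution_H12_lambda_0} to conclude a completely traversed segment spans a half-period in arc length and hence an arc angle of $\pi$ after dividing by $r = \frac{1}{\sqrt{1+U_{max}^2}}$. The only cosmetic difference is that you invoke the half-period spacing of consecutive sine zeros directly, whereas the paper matches the boundary values of $H_{12}$ and $\frac{dH_{12}}{ds}$ at the two inflection points to reach the same conclusion.
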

\begin{proof}
    From the phase portrait shown in Fig.~\ref{fig: phase_portrait_H12_lambda_0}, it is immediate that the optimal path is a concatenation of $C$ segments. Consider an $L$ segment that is completely traversed, which corresponds to $\kappa = U_{max}$. Letting $s_1$ denote the arc length of the $L$ segment, the condition for $H_{12}$ and $\frac{d H_{12}}{ds}$ before and after traversing the $L$ segment are given by $\left(H_{12} (0), \frac{dH_{12} (0)}{ds} \right) = (0, -\lambda_{H_{12}} \sqrt{1 + U_{max}^2})$ and $\left(H_{12} (s_1), \frac{dH_{12} (s_1)}{ds} \right) = (0, \lambda_{H_{12}} \sqrt{1 + U_{max}^2}),$ respectively (as observed from Fig.~\ref{fig: phase_portrait_H12_lambda_0}). Using the expressions for $H_{12} (s)$ and $\frac{d H_{12} (s)}{ds}$ given in \eqref{eq: solution_H12_lambda_0} and \eqref{eq: solution_dH12ds_lambda_0}, it follows that
    \begin{align*}
        \sin{(-\phi_{H_{12}})} &= \sin{\left(\sqrt{1 + U_{max}^2} s_1 - \phi_{H_{12}} \right)} = 0, \\
        \cos{\left(- \phi_{H_{12}} \right)} &= -\cos{\left(\sqrt{1 + U_{max}^2} s_1 - \phi_{H_{12}} \right)} = -1.
    \end{align*}

    It should be recalled from Section~\ref{sect: optimal_paths} that if $\kappa = \pm U_{max},$ the segment corresponds to a circular arc of radius $r = \frac{1}{\sqrt{1 + U_{max}^2}}$. Hence, the arc length $s$ is related to the angle of the $L$ segment $\phi_L$ by $s_1 = \phi_L r = \phi_L \frac{1}{\sqrt{1 + U_{max}^2}}.$ Since $\cos{\left(-\phi_{H_{12}} \right)} = -1,$ $\cos{\left(\phi_L - \phi_{H_{12}} \right)} = 1,$ it follows that $\phi_L$ is an odd multiple of $\pi.$ Therefore, $\phi_L = \pi$ since $\phi_L \in [0, 2 \pi)$ radians.
\end{proof}


From the above proposition, it follows that the optimal path is of type $C_{\alpha} C_\pi C_\pi \cdots C_{\gamma},$ where $0 \leq \alpha, \gamma \leq \pi$. However, the number of concatenations is unknown. Hence, we claim that
\begin{itemize}
    \item A $C C_\pi C$ path is non-optimal for $r \leq \frac{1}{\sqrt{2}},$ which is proved in Lemma~\ref{lemma: non-optimality_CCpiC}. 
    \item A $C C_\pi C_\pi C$ path is non-optimal for $r \leq \frac{\sqrt{3}}{2},$ which is proved in Lemma~\ref{lemma: nonoptimality_CCpiCpiC}.
\end{itemize}

We now state and prove non-optimality of a $C C_\pi C$ path for $r \leq \frac{1}{\sqrt{2}}$ in the following lemma.

\begin{lemma} \label{lemma: non-optimality_CCpiC}
A non-trivial $CC_\pi C$ path is not optimal for $r \leq \frac{1}{\sqrt{2}}$.
\end{lemma}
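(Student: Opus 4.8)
I would prove this by producing, for an arbitrary non-trivial $CC_\pi C$ path, a strictly shorter admissible path between the same two configurations. By the $L\leftrightarrow R$ reflection symmetry of the sphere it is enough to consider a path $L_\alpha R_\pi L_\gamma$ with $\alpha,\gamma\in(0,\pi]$: the three segments must alternate because consecutive $C$'s join at a point where $H_{12}$ changes sign (the discussion preceding \eqref{eq: optimal_control_inputs}), and a not-completely-traversed $C$ corresponds to at most a half period of the sinusoid $H_{12}$ in \eqref{eq: solution_H12_lambda_0}, so the terminal arcs have angle at most $\pi$. The first step is to record each segment as a rigid motion of the Sabban frame: reading off the generators in \eqref{eq: Lie_group_evolution}, an $L$ (resp. $R$) segment of arc angle $\phi$ is a rotation by $\phi$ about a fixed body-frame unit axis $\hat a_L$ (resp. $\hat a_R$), and a short computation gives $\hat a_L\cdot\hat a_R=\frac{1-U_{max}^2}{1+U_{max}^2}=2r^2-1$, which is $\le 0$ exactly for $r\le\frac{1}{\sqrt{2}}$ and equals $0$ at $r=\frac{1}{\sqrt{2}}$. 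This inner product is the source of the threshold.

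The boundary case $r=\frac{1}{\sqrt{2}}$ makes the mechanism transparent. There $\hat a_L\perp\hat a_R$, so the half-turn $R_\pi$ — which acts on unit axes by the reflection $v\mapsto 2(\hat a_R\cdot v)\hat a_R-v$ — sends $\hat a_L$ to $-\hat a_L$. Hence $R_\pi L_\gamma=L_{-\gamma}R_\pi$ and $L_\alpha R_\pi=R_\pi L_{-\alpha}$ as elements of $SO(3)$, and therefore $L_\alpha R_\pi L_\gamma$ equals the two-segment motion $L_{\alpha-\gamma}R_\pi$ when $\alpha\ge\gamma$ and $R_\pi L_{\gamma-\alpha}$ when $\gamma\ge\alpha$. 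In either case this is an admissible $CC$ path (a degenerate $CGC$) reaching the same configuration, and its length is smaller by $2r\min(\alpha,\gamma)>0$; so a non-trivial $CC_\pi C$ is strictly dominated.

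For $r<\frac{1}{\sqrt{2}}$ the half-turn no longer maps $\hat a_L$ to $\pm\hat a_L$ and this algebraic collapse fails, so an explicit shorter competitor must be built by hand. My plan is to splice out the middle $C_\pi$ arc together with suitably chosen sub-arcs of the two neighbouring $L$ turns and replace them by a single great-circle arc, producing a $CGC$ path (degenerating to $CG$, $GC$, $CC$ or $C$ in some sub-cases) between the same configurations, and then to show the resulting change in length is non-negative, and strictly positive for a non-trivial path, precisely when $\hat a_L\cdot\hat a_R=2r^2-1\le 0$. A handy intermediate fact is that traversing a half small-circle displaces the position $\mathbf X$ by the fixed geodesic distance $2\arcsin r$ and reverses the tangent, $\mathbf T\mapsto-\mathbf T$; this fixes the endpoints of the competitor's $G$ arc and reduces the length comparison to a trigonometric inequality in $\alpha$, $\gamma$ and $r$.

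The length comparison in the general range is the step I expect to be the obstacle. Its planar analogue — that a $CCC$ path whose middle arc is at most $\pi$ is never optimal — is classical, but it exploits a straight-line shortcut that has no spherical counterpart, and the inequality genuinely needs $r\le\frac{1}{\sqrt{2}}$: the statement is false for $r$ just above $\frac{1}{\sqrt{2}}$, where $CC_\pi C$ becomes a bona fide optimal candidate (as the paper establishes later). I would try to organize it as a monotonicity-in-$r$ argument anchored at the clean case $r=\frac{1}{\sqrt{2}}$ above, carrying the same $CGC$/$CC$ competitor continuously down to $r\to 0$ and checking that the length saving stays positive throughout; making this uniform over all of $(0,\frac{1}{\sqrt{2}}]$, including the degenerate sub-cases in which $\alpha$ or $\gamma$ is small and the competitor loses a segment, is the delicate point.
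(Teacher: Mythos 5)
Your argument is complete only at the single value $r=\tfrac{1}{\sqrt{2}}$, where the observation that $\hat a_L\cdot\hat a_R=2r^2-1=0$ forces $R_\pi L_\gamma=L_{-\gamma}R_\pi$ is a clean and correct way to collapse $L_\alpha R_\pi L_\gamma$ to a strictly shorter $CC$ path (this essentially recovers Monroy's boundary case). For the entire open range $r<\tfrac{1}{\sqrt{2}}$, however, you explicitly defer the decisive step: you name a competitor (excise the $C_\pi$ arc plus sub-arcs of its neighbours and insert a great-circle arc) and a strategy (a monotonicity-in-$r$ trigonometric inequality anchored at $r=\tfrac{1}{\sqrt{2}}$), but you neither verify that this competitor reaches the same terminal configuration nor establish the length inequality. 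Since the lemma's content is precisely the full interval $r\le\tfrac{1}{\sqrt{2}}$, this is a genuine gap, not a presentational one; and it is not obvious that your particular competitor (a $C G C$ with the $G$ in the middle) even exists as a feasible matching path, since the shape of the needed shortcut is constrained by the rotation-matrix identity, not free to choose.

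For comparison, the paper avoids any global trigonometric inequality by working infinitesimally: it replaces only the subpath $L_\delta R_\pi L_\delta$ (with $\delta$ small) by a $G_{\phi_1(\delta)}R_{\pi+\phi_2(\delta)}G_{\phi_1(\delta)}$ path — note the great arcs sit at the ends and the middle remains a turn of angle slightly \emph{less} than $\pi$ — and determines $\phi_1,\phi_2$ to second order in $\delta$ by differentiating the matrix identity \eqref{eq: LRL_path_matrix_equation}. The constraint $r\le\tfrac{1}{\sqrt{2}}$ enters exactly where your heuristic predicts it should, but through feasibility rather than through a length inequality: the second-order comparison forces $a_2^2=4(1-2r^2)$, which has a real root only for $r\le\tfrac{1}{\sqrt{2}}$, and the sign choices $a_2=-2\sqrt{1-2r^2}\le 0$, $a_1\ge 0$ are needed so that $\phi_1\ge 0$ and $\pi+\phi_2\ge 0$ define an admissible path. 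The length saving then comes out as $\frac{d\Delta}{d\delta}\big|_{\delta=0}=\frac{2}{r}\bigl(1-\sqrt{1-2r^2}\bigr)\bigl(1-r^2\bigr)>0$, uniformly in $\alpha,\gamma$, which disposes of all the sub-cases (small $\alpha$ or $\gamma$, degenerate competitors) that you flag as delicate. If you want to salvage your approach, you would need to either carry out your global inequality in full or switch to this local first-order perturbation of the matrix constraint.
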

\begin{proof}
    Consider a non-trivial optimal $L_\alpha R_\pi L_\gamma$ path, wherein $\alpha, \gamma > 0.$ Consider an angle $\delta$ such that $0 < \delta < \min{\left(\alpha, \gamma \right)},$ and $\delta$ is close to zero. A non-trivial $L_\alpha R_\pi L_\gamma$ path can be shown to be non-optimal if there exists an alternate path of a lower path length than the $L_\delta R_\pi L_\delta$ subpath of the $L_\alpha R_\pi L_\gamma$ path. For this purpose, it is claimed that there exists a $G_{\phi_1 (\delta)} R_{\pi + \phi_2 (\delta)} G_{\phi_1(\delta)}$ path such that $\phi_1(\delta) \geq 0, \phi_2 (\delta) \leq 0$ for $r \leq 1/\sqrt{2}, 0<\delta <<1$, which is of a lower path length than the $L_\delta R_\pi L_\delta$ path. 
    Hence, the alternate path must satisfy
    \begin{align} \label{eq: LRL_path_matrix_equation}
    \begin{split}
        &\mathbf{R}_L (r, \delta) \mathbf{R}_R (r, \pi) \mathbf{R}_L (r, \delta) \\
        &= \mathbf{R}_G (\phi_1 (\delta)) \mathbf{R}_R (r, \pi + \phi_2 (\delta)) \mathbf{R}_G (\phi_1 (\delta)),
    \end{split}
    \end{align}    
    where $\mathbf{R}_L, \mathbf{R}_R, \mathbf{R}_G$ denote the rotation matrices corresponding to the $L, R,$ and $G$ segments, respectively. The above equation ensures that the initial and alternate paths connect the same initial and final configurations.

    A depiction of the initial $L_\delta R_\pi L_\delta$ and the proposed alternate $GRG$ path is demonstrated for $r = 0.5$ and $\delta = 30^\circ$ in Fig.~\ref{fig: GRG_path_shortcut_LRL}.\footnotemark\footnotetext{The closed-form expressions for the arc angles of the $GRG$ path were computed analytically by solving \eqref{eq: LRL_path_matrix_equation}. A discussion regarding path construction is provided in Section~\ref{sect: results}.}\, From this figure, it can be immediately observed that the alternate $GRG$ path is shorter than the $L_\delta R_\pi L_\delta$ path due to spherical convexity.\footnotemark
    \footnotetext{An analytical proof is provided in Lemma~\ref{lemma: non-optimality_CCpiC} as opposed to utilizing the spherical convexity argument to account for the non-optimality argument holding for up to $r = \frac{1}{\sqrt{2}}.$ The dependence on $r$ could not be rigorously obtained through the geometric approach.}
    
    \begin{figure}[htb!]
        \centering
        \includegraphics[width = 0.8\linewidth]{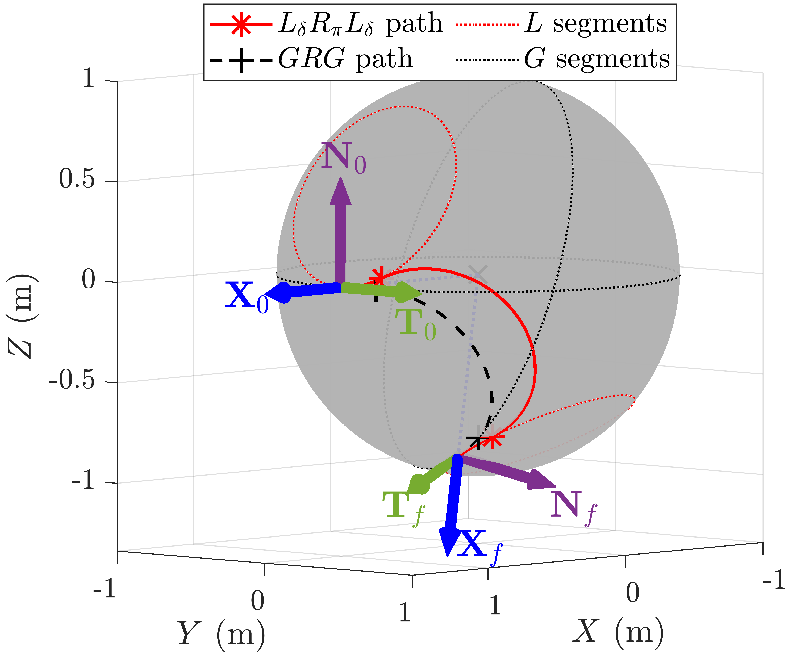}
        \caption{A $GRG$ path connecting the same configurations as the $L_\delta R_\pi L_\delta$ path for $r = 0.5,$ $\delta = 30^\circ$. Animations of a vehicle moving along the initial and alternate paths are available on our GitHub page.}
        \label{fig: GRG_path_shortcut_LRL}
    \end{figure}
    
    It should be noted here that when $\delta = 0$, it is desired for $\phi_1$ and $\phi_2$ to be zero for \eqref{eq: LRL_path_matrix_equation} to be satisfied. In the vicinity of $\delta = 0,$ $\phi_1$ and $\phi_2$ can be approximated using Taylor's series expansion as $\phi_1 (\delta) = a_1 \delta + \frac{1}{2} b_1 \delta^2,$ $\phi_2 (\delta) = a_2 \delta + \frac{1}{2} b_2 \delta^2.$
    
    The $LRL$ path considered will not be optimal if the difference in the path length between the $LRL$ path and the alternate $GRG$ path is greater than zero (refer to Fig.~\ref{fig: GRG_path_shortcut_LRL}). The path difference is given by
    \begin{align} \label{eq: path_length_difference}
        \Delta (\delta) &= 2 r \delta - 2 \phi_1 (\delta) - r \phi_2 (\delta).
    \end{align}
    Noting that $\Delta (\delta = 0) = 0,$ it suffices to show that $\frac{d \Delta (\delta)}{d \delta} |_{\delta = 0} > 0.$ The expression for $\frac{d \Delta (\delta)}{d \delta} |_{\delta = 0}$ is given by
    \begin{align} \label{eq: derivative_cost_difference_GRG_path}
        \frac{d \Delta (\delta)}{d \delta} \bigg{|}_{\delta = 0} = 2 r - 2 a_1 - r a_2.
    \end{align}
    To prove the above claim, $a_1$ and $a_2$ are to be calculated. Hence, it is desired to differentiate \eqref{eq: LRL_path_matrix_equation}, for which the expressions for the derivative of $\mathbf{R}_L$ and $\mathbf{R}_R$ are desired to be obtained.
    
    Noting that the Sabban frame equations in \eqref{eq: Sabban_frame_equations} can be assembled as 
    \begin{align*}
        \mathbf{R}' (s) = \mathbf{R} (s) \underbrace{\begin{pmatrix}
            0 & -1 & 0 \\
            1 & 0 & -u_g \\
            0 & u_g & 0
        \end{pmatrix}}_{\Omega}, 
    \end{align*}
    the solution to the above equation when $u_g$ is constant is given by $\mathbf{R} (s) = \mathbf{R} (s_i) \left(e^{\Omega \Delta s} \right)$ \cite{free_terminal_sphere}. Hence, corresponding to $u_g = 0, U_{max},$ and $-U_{max},$ $\mathbf{R}_S (s) := e^{\Omega_S \Delta s}$ is the rotation matrix of segment $S = G, L, R,$ respectively. Noting that the arc length of the segment $\Delta s$ is related to the arc angle of the segment $\phi$ by $\Delta s = \phi$ for a $G$ segment and $\Delta s = r \phi$ for $L$ and $R$ segments, the rotation matrix expression can be alternately written as $\mathbf{R}_S (\phi) := e^{\hat{\Omega}_S \phi},$ where $\hat{\Omega}_G = \Omega_G, \hat{\Omega}_L = r \Omega_L,$ and $\hat{\Omega}_R = r \Omega_R.$ Hence, it follows that $\frac{d \mathbf{R}_S}{d \phi} = \mathbf{R}_S \hat{\Omega}_S.$
    
    Therefore, differentiating \eqref{eq: LRL_path_matrix_equation}, using the expression for the derivative of the rotation matrices, and evaluating the equation for $\delta = 0,$ the equation can be simplified as\footnotemark\footnotetext{The reader can refer to Appendix~\ref{appsubsect: calculations} for the intermediary steps to derive \eqref{eq: first_order_equation_GRG_path} from \eqref{eq: LRL_path_matrix_equation}.}
    \begin{align} \label{eq: first_order_equation_GRG_path}
    \begin{split}
        &\hat{\Omega}_L \mathbf{R}_R (r, \pi) + \mathbf{R}_R (r, \pi) \hat{\Omega}_L \\
        &= a_1 \left(\hat{\Omega}_G \mathbf{R}_R (r, \pi) + \mathbf{R}_R (r, \pi) \hat{\Omega}_G \right) + a_2 \mathbf{R}_R (r, \pi) \hat{\Omega}_R.
    \end{split}
    \end{align}
    Using the closed form expression for $\mathbf{R}_R$ (derived in \cite{free_terminal_sphere}, and summarized in Appendix~\ref{appsubsect: rotation_matrices}) and the skew-symmetric matrices, the above equation can be expanded as
    \begin{align*}
        &\left(2 r^2 - 1 \right) \begin{pmatrix}
            0 & 2 r & 0 \\
            -2 r & 0 & -2 \sqrt{1 - r^2} \\
            0 & 2 \sqrt{1 - r^2} & 0
        \end{pmatrix} \\
        &= (2 r a_1 + a_2) \begin{pmatrix}
            0 & r & 0 \\
            -r & 0 & - \sqrt{1 - r^2} \\
            0 & \sqrt{1 - r^2} & 0
        \end{pmatrix}.
    \end{align*}
    The matrices in the above equation are non-zero since $0 < r \leq \frac{1}{\sqrt{2}}$. Hence, from the above equation, it follows that 
    \begin{align} \label{eq: constraint_a1_a2_GRG_path}
        2 r a_1 + a_2 = 2 (2 r^2 - 1).
    \end{align}
    Since $a_1$ and $a_2$ cannot be explicitly obtained from the first-order approximation comparison, \eqref{eq: LRL_path_matrix_equation} is differentiated twice with respect to $\delta$ and 
    evaluated at $\delta = 0$. Expanding the equation using the expression for $\mathbf{R}_R$ (given in Appendix~\ref{appsubsect: rotation_matrices}) and the skew-symmetric matrices, the equation obtained is given by
    \begin{align} \label{eq: LRL_path_matrix_equation_second_derivative_expanded}
    \begin{split}
        &\begin{pmatrix}
            -4 r^2 + 8 r^4 & 0 & 0 \\
            0 & 4 \left(1 - 2 r^2 \right)^2 & 0 \\
            0 & 0 & 4 (1 - 2 r^2) (1 - r^2)
        \end{pmatrix} \\
        &= \begin{pmatrix}
            \zeta_{11} & \zeta_{12} & \zeta_{13} \\
            -\zeta_{12} & \zeta_{22} & \zeta_{23} \\
            \zeta_{13} & -\zeta_{23} & \zeta_{33}
        \end{pmatrix},
    \end{split}
    \end{align}
    where
    \begin{align*}
        \zeta_{11} &= 4 r^2 a_1^2 + 4 r a_1 a_2 + r^2 a_2^2, \,\, \zeta_{12} = (2 r b_1 + b_2) r, \\
        \zeta_{13} &= \sqrt{1 - r^2} \left(2 r a_1^2 + 2 a_1 a_2 + r a_2^2 \right), \,\, \zeta_{22} = \left(2 r a_1 + a_2 \right)^2, \\
        \zeta_{23} &= -(2 r b_1 + b_2) \sqrt{1 - r^2}, \,\, \zeta_{33} = \left(1 - r^2 \right) a_2^2.
    \end{align*}
    From the above equation, it follows that $2 r b_1 + b_2 = 0$ since $\zeta_{12}$ and $\zeta_{23}$ are zero, and $a_2^2 = 4 (1 - 2 r^2)$ (comparing the terms in the third row and third column). From the two possible solutions, consider $a_2 = -2 \sqrt{1 - 2 r^2},$ which is less than or equal to zero for $r \leq \frac{1}{\sqrt{2}}$. Using \eqref{eq: constraint_a1_a2_GRG_path}, it follows that
    \begin{align*}
        a_1 = \frac{\sqrt{1 - 2 r^2} \left(1 - \sqrt{1 - 2 r^2} \right)}{r} \geq 0,
    \end{align*}
    for $0 < r \leq \frac{1}{\sqrt{2}}.$ 

    \begin{remark}
        Using the obtained expressions for $a_1$ and $a_2,$ the following equations can be easily verified:
        \begin{align*}
            \zeta_{11} &= -4 r^2 + 8 r ^4, \quad
            \zeta_{13} = 0, \quad
            \zeta_{22} = 4 \left(1 - 2 r^2 \right)^2.
        \end{align*}
        Hence, \eqref{eq: LRL_path_matrix_equation_second_derivative_expanded} is satisfied for the chosen solution for $a_1$ and $a_2,$ with $b_1$ and $b_2$ satisfying $2 r b_1 + b_2 = 0.$
    \end{remark}
    \begin{remark}
        The solution chosen for $a_2$ ensures that $a_1$ is non-negative for $r \leq \frac{1}{\sqrt{2}}$, and in particular, positive for $r < \frac{1}{\sqrt{2}}$. As $a_1$ equals $\frac{d\phi_1 (\delta)}{d \delta}|_{\delta = 0}$, and $\phi_1 (\delta)|_{\delta = 0} = 0,$ it is desired for $\phi_1 (\delta) \geq 0$ for small non-negative perturbation in $\delta$. 
        This is necessary for the feasibility of the alternate $GRG$ path, as $\phi_1$, representing the arc angle of the $G$ segment, must be non-negative. Choosing the other solution for $a_2$ would result in $a_1 \leq 0$, and thus $\phi_1$ would be negative near $\delta = 0$ for $r < \frac{1}{\sqrt{2}}$, rendering the path infeasible.
    \end{remark}

    Substituting the obtained expressions for $a_1$ and $a_2$ in \eqref{eq: derivative_cost_difference_GRG_path}, $\frac{d \Delta (\delta)}{d \delta}|_{\delta = 0}$ is simplified as
    \begin{align*}
        \frac{d \Delta (\delta)}{d \delta}|_{\delta = 0} = \frac{2}{r} \left(1 - \sqrt{1 - 2 r^2} \right) \left(1 - r^2 \right) > 0,
    \end{align*}
    for $0 < r \leq \frac{1}{\sqrt{2}}$. Therefore, the constructed alternate $GRG$ path is shorter than the considered $L_\delta R_\pi L_\delta$ path. Hence, the $L_\delta R_\pi L_\delta$ path is non-optimal. Using a similar proof for a $R_\delta L_\pi R_\delta$ path, it can be concluded that a non-trivial $CC_\pi C$ path is not an optimal path for $r \leq \frac{1}{\sqrt{2}}$.
\end{proof}

Using the previous result, it follows that the optimal path contains at most two $C$ segments for $r \leq \frac{1}{\sqrt{2}}$ for $\lambda = 0.$ However, the maximum number of concatenations in an optimal path for $r > \frac{1}{\sqrt{2}}$ is not known. To address this, we claim that the maximum number of concatenations for $r \leq \frac{\sqrt{3}}{2}$ is three through the following lemma, which is the second claimed result for abnormal controls (refer to Fig.~\ref{fig: overview_cases_results}). 
We proceed to formally state and prove the non-optimality of a $CC_\pi C_\pi C$ path for $r \leq \frac{\sqrt{3}}{2}$ in the following lemma.

\begin{lemma} \label{lemma: nonoptimality_CCpiCpiC}
    A non-trivial $CC_\pi C_\pi C$ path is not optimal for $r \leq \frac{\sqrt{3}}{2}$.
\end{lemma}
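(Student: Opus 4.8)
The plan is to follow the template of the proof of Lemma~\ref{lemma: non-optimality_CCpiC}. By the symmetry of the problem under reversing a path and swapping $L \leftrightarrow R$, it suffices to rule out a non-trivial optimal $L_\alpha R_\pi L_\pi R_\gamma$ path with $\alpha, \gamma > 0$; the $R_\alpha L_\pi R_\pi L_\gamma$ case then follows by applying the same construction to the mirror-reversed path. Fix $\delta$ with $0 < \delta < \min(\alpha, \gamma)$ and $\delta \ll 1$, and isolate the $L_\delta R_\pi L_\pi R_\delta$ subpath. I would show this subpath is not optimal by exhibiting a strictly shorter path joining the same two configurations, of the symmetric form
\begin{align*}
    G_{\phi_1(\delta)}\, R_{\pi + \phi_2(\delta)}\, L_{\pi + \phi_2(\delta)}\, G_{\phi_1(\delta)},
\end{align*}
with $\phi_1(\delta) \ge 0$, $\phi_2(\delta) \le 0$, and $\phi_1(0) = \phi_2(0) = 0$ so that the two paths coincide at $\delta = 0$. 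The symmetric ansatz is natural because $L_\delta R_\pi L_\pi R_\delta$ is itself invariant under reversal together with $L \leftrightarrow R$, and it provides exactly the two free parameters needed to match the resulting one-parameter family of rotations within the symmetric submanifold.

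The construction is governed by the rotation-matrix identity (analogous to \eqref{eq: LRL_path_matrix_equation})
\begin{align*}
    &\mathbf{R}_L(r, \delta)\, \mathbf{R}_R(r, \pi)\, \mathbf{R}_L(r, \pi)\, \mathbf{R}_R(r, \delta) \\
    &\qquad = \mathbf{R}_G(\phi_1(\delta))\, \mathbf{R}_R(r, \pi + \phi_2(\delta))\, \mathbf{R}_L(r, \pi + \phi_2(\delta))\, \mathbf{R}_G(\phi_1(\delta)),
\end{align*}
together with the Taylor expansions $\phi_i(\delta) = a_i \delta + \frac{1}{2} b_i \delta^2$ near $\delta = 0$. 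The difference in path length between the original subpath and the alternate path is
\begin{align*}
    \Delta(\delta) = 2 r \delta - 2 \phi_1(\delta) - 2 r\, \phi_2(\delta),
\end{align*}
and since $\Delta(0) = 0$, it suffices to establish $\frac{d\Delta}{d\delta}\big|_{\delta = 0} = 2r - 2 a_1 - 2 r a_2 > 0$ for all $0 < r \le \frac{\sqrt{3}}{2}$.

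To obtain $a_1$ and $a_2$, I would differentiate the matrix identity once and then twice with respect to $\delta$, evaluate at $\delta = 0$ using $\frac{d\mathbf{R}_S}{d\phi} = \mathbf{R}_S \hat{\Omega}_S$ and the closed forms for $\mathbf{R}_R(r,\pi)$ and for the product $\mathbf{R}_R(r,\pi)\mathbf{R}_L(r,\pi)$ (derived from the formulas in \cite{free_terminal_sphere}). As in Lemma~\ref{lemma: non-optimality_CCpiC}, the first-order comparison is expected to yield a single linear relation between $a_1$ and $a_2$ (the analogue of \eqref{eq: constraint_a1_a2_GRG_path}), while the second-order comparison should pin down $a_2^2$ — anticipated to be proportional to $3 - 4 r^2$, so that the required root is real exactly when $r \le \frac{\sqrt{3}}{2}$ — and force a relation of the form $2 r b_1 + b_2 = 0$. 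Choosing the branch $a_2 \le 0$ (the one for which $\phi_1 \ge 0$ and $\phi_2 \le 0$, making the alternate path admissible), solving for $a_1$ from the first-order relation, and substituting into $2r - 2a_1 - 2 r a_2$ should produce an expression that is manifestly positive on $0 < r \le \frac{\sqrt{3}}{2}$, proving non-optimality. One should also verify, as in the Remark following Lemma~\ref{lemma: non-optimality_CCpiC}, that the remaining entries of the second-order matrix equation are automatically consistent with the chosen $(a_1, a_2, b_1, b_2)$.

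The main obstacle is the bookkeeping: the left-hand side is now a product of four rotation matrices, two of them through arcs of angle $\pi$, so the closed forms feeding the first- and second-derivative comparisons are considerably heavier than in Lemma~\ref{lemma: non-optimality_CCpiC}, and the matching must be checked for genuine consistency within the symmetric submanifold. The more delicate conceptual point is the choice of root for $a_2$: the wrong root forces $\phi_1$ or $\phi_2$ to violate its sign constraint for small $\delta$, so the shortcut would not be a valid path; it is precisely this sign analysis — rather than a direct spherical-convexity picture, which does not expose the sharp dependence on $r$ — that delivers the threshold $r = \frac{\sqrt{3}}{2}$.
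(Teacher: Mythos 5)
Your plan is sound and, as far as I can check, the computations it calls for do go through; but it is a genuinely different construction from the paper's, and one step is less automatic than you anticipate. The paper shortcuts the $L_\delta R_\pi L_\pi R_\delta$ subpath with a \emph{three}-segment path $R_{\pi+\phi_1}G_{\phi_2}L_{\pi+\phi_1}$ (great circle in the middle, the two boundary turns lengthened by $\phi_1\le 0$), whereas you use the four-segment $G_{\phi_1}R_{\pi+\phi_2}L_{\pi+\phi_2}G_{\phi_1}$ with the great-circle arcs on the outside. Both ansätze respect the reversal-plus-$L\leftrightarrow R$ symmetry of the subpath, and for both the two first-order tangent directions are parallel (to the same matrix appearing in the paper's first-derivative equation), so the first-order comparison collapses to a single linear relation — for yours it is $(2r^2-1)a_1+ra_2=r(4r^2-3)$ rather than the paper's \eqref{eq: relation_a1_a2_CCpiCpiC_nonoptimality}. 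Here is the one place your expectations diverge from what actually happens: with the paper's ansatz the length decrease $2r-2ra_1-a_2$ is exactly $2r$ minus the left-hand side of the first-order relation, so $\Delta'(0)=8r(1-r^2)>0$ falls out \emph{before} $a_1,a_2$ are separately known, and the threshold $\frac{\sqrt{3}}{2}$ enters only through the existence of a real root $a_2^2=8r^2(3-4r^2)$. With your ansatz, $2r-2a_1-2ra_2$ is \emph{not} proportional to $(2r^2-1)a_1+ra_2$, so positivity is not "manifest" after substituting the first-order relation; you must actually solve the second-order system. Doing so gives $a_2^2=3-4r^2$ (your anticipated proportionality to $3-4r^2$ is exactly right, so the threshold appears in the same way), and on the feasible branch $a_1=\frac{2r\sqrt{3-4r^2}}{1+\sqrt{3-4r^2}}\ge 0$, $a_2=-\sqrt{3-4r^2}\le 0$, whence
\begin{align*}
\frac{d\Delta}{d\delta}\Big|_{\delta=0}=2r-2a_1-2ra_2=\frac{8r\left(1-r^2\right)}{1+\sqrt{3-4r^2}}>0 .
\end{align*}
So the argument closes, and it even buys you something the paper's does not: it covers all $r\le\frac{\sqrt{3}}{2}$ uniformly, without falling back on Lemma~\ref{lemma: non-optimality_CCpiC} for $r\le\frac{1}{\sqrt{2}}$ (you should, however, note that at $r=\frac{1}{\sqrt{2}}$ the quadratic for $a_1$ degenerates to a linear equation with root $a_1=r$, and that for $r<\frac{1}{\sqrt{2}}$ the rejected branch is sign-feasible but lengthens the path, so the branch selection must be justified by the sign of $\Delta'(0)$ there rather than by infeasibility of $\phi_1$). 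Finally, as in the paper, a formal second-order Taylor match plus consistency of the remaining matrix entries is the level of rigor being used on both sides, since the parametrization is rank-one at $\delta=0$; your closing paragraph correctly identifies that this verification is required.
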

\begin{proof}
    Consider a non-trivial $L_\alpha R_\pi L_\pi R_\gamma$ path, where $\alpha, \gamma > 0.$ Consider an angle $\delta$ such that $0 < \delta < \min(\alpha, \gamma),$ and $\delta$ is close to zero. It is sufficient to show that this path is non-optimal for $r \in \left(\frac{1}{\sqrt{2}}, \frac{\sqrt{3}}{2} \right]$, since for $r \leq \frac{1}{\sqrt{2}}$, the considered path is non-optimal using Lemma~\ref{lemma: non-optimality_CCpiC}. It is claimed that there exists an alternate $R_{\pi + \phi_1 (\delta)} G_{\phi_2 (\delta)} L_{\pi + \phi_1 (\delta)}$ path such that $\phi_1 (\delta) \leq 0, \phi_2 (\delta) \geq 0$ for $0 < \delta << 1$ that can connect the same configurations as the $L_\delta R_\pi L_\pi R_\delta$ path at a lower path length for $r \in \left(\frac{1}{\sqrt{2}}, \frac{\sqrt{3}}{2} \right]$. Hence, the alternate path must satisfy
    \begin{align} \label{eq: LRLR_path_matrix_equation}
    \begin{split}
        &\mathbf{R}_L (r, \delta) \mathbf{R}_R (r, \pi) \mathbf{R}_L (r, \pi) \mathbf{R}_R (r, \delta) \\
        &= \mathbf{R}_R (r, \pi + \phi_1 (\delta)) \mathbf{R}_G (\phi_2 (\delta)) \mathbf{R}_L (r, \pi + \phi_1 (\delta)).
    \end{split}
    \end{align}

    A depiction of the initial $L_\delta R_\pi L_\pi R_\delta$ path and an alternate $RGL$ path is illustrated for $r = 0.865$ and $\delta = 20^\circ$ in Fig.~\ref{fig: RGL_path_shortcut_LRLR}.\footnotemark\footnotetext{In this figure, closed-form expressions for the arc angles for the $RGL$ path were obtained by solving \eqref{eq: LRLR_path_matrix_equation}. A discussion regarding path construction is provided in Section~\ref{sect: results}.}
    \begin{figure}[htb!]
        \centering
        \includegraphics[width = 0.8\linewidth]{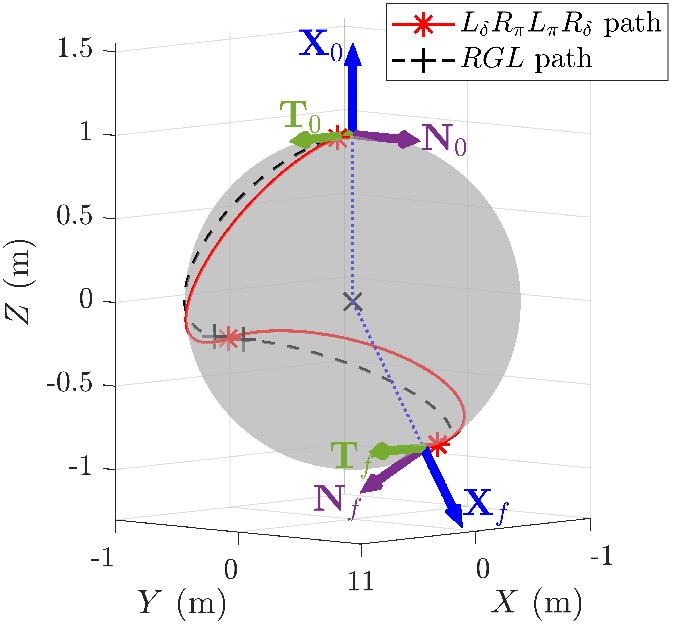}    \caption{An $RGL$ path connecting the same configurations as the $L_\delta R_\pi L_\pi R_\delta$ path for $r = 0.865,$ $\delta = 20^\circ$. Animations of a vehicle moving along the initial and alternate paths are available on our GitHub page.}
        \label{fig: RGL_path_shortcut_LRLR}
    \end{figure}
    
    It should be noted here that when $\delta = 0$, it is desired for $\phi_1$ and $\phi_2$ to be zero for \eqref{eq: LRLR_path_matrix_equation} to be satisfied. In the vicinity of $\delta = 0,$ $\phi_1$ and $\phi_2$ can be approximated using Taylor's series expansion as $\phi_1 (\delta) = a_1 \delta + \frac{1}{2} b_1 \delta^2,$ $\phi_2 (\delta) = a_2 \delta + \frac{1}{2} b_2 \delta^2.$
    
    The considered $LRLR$ path will not be optimal if the difference in the path length between the $LRLR$ path and the alternate $RGL$ path is greater than zero (refer to Fig.~\ref{fig: RGL_path_shortcut_LRLR}). The difference in the path length is given by 
    \begin{align} \label{eq: cost_difference_RGL_path}
        \Delta (\delta) &= 
        2 r \delta - 2 r \phi_1 (\delta) - \phi_2 (\delta).
    \end{align}
    Noting that $\Delta (\delta = 0) = 0,$ it suffices to show that $\frac{d \Delta (\delta)}{d \delta} |_{\delta = 0} > 0.$ The expression for $\frac{d \Delta (\delta)}{d \delta} |_{\delta = 0}$ is given by
    \begin{align} \label{eq: derivative_cost_difference_RGL_path}
        \frac{d \Delta (\delta)}{d \delta} \bigg{|}_{\delta = 0}
        = 2 r - 2 r a_1 - a_2.
    \end{align}    
    
    To prove the above claim, $a_1$ and $a_2$ are to be calculated. For this purpose, \eqref{eq: LRLR_path_matrix_equation} is differentiated and evaluated for $\delta = 0.$ Furthermore, the expression for the derivative of the rotation matrices derived in the proof of Lemma~\ref{lemma: non-optimality_CCpiC}, given by $\frac{d \mathbf{R}_S}{d \delta} = \mathbf{R}_S \hat{\Omega}_S,$ where $S \in \{L, R, G\},$ is used.
    Using the closed-form expression for the rotation matrices (given in Appendix~\ref{appsubsect: rotation_matrices}) and the skew-symmetric matrices corresponding to the segments (introduced in the proof of Lemma~\ref{lemma: non-optimality_CCpiC}), 
    the equation obtained is given by
    \begin{align*}
        &2 r \left(4 r^2 - 3 \right) \begin{pmatrix}
            0 & 1 - 2 r^2 & 0 \\
            -\left(1 - 2 r^2 \right) & 0 & -2 r \sqrt{1 - r^2}\\
            0 & -2 r \sqrt{1 - r^2} & 0
        \end{pmatrix} \\
        &= \left(2 r a_1 + a_2 \right) \begin{pmatrix}
            0 & \left(1 - 2 r^2 \right) & 0 \\
            \left(2 r^2 - 1 \right) & 0 & - 2 r \sqrt{1 - r^2} \\
            0 & - 2 r \sqrt{1 - r^2} & 0
        \end{pmatrix}.
    \end{align*}
    In the above equation, the matrices are non-zero since $r > \frac{1}{\sqrt{2}}$, which implies $1 - 2 r^2 > 0.$ Hence, from the above equation, it follows that
    \begin{align} \label{eq: relation_a1_a2_CCpiCpiC_nonoptimality}
        2 r a_1 + a_2 = 2 r \left(4 r^2 - 3 \right).
    \end{align}
    Substituting the above equation in \eqref{eq: derivative_cost_difference_RGL_path}, $\frac{d \Delta (\delta)}{d \delta} \big{|}_{\delta = 0}$ is obtained as
    \begin{align*}
        \frac{d \Delta (\delta)}{d \delta}|_{\delta = 0} = 2 r - 2 r \left(4 r^2 - 3 \right) = 8 r \left(1 - r^2 \right),
    \end{align*}
    which is greater than zero for all $0 < r < 1.$ However, this does not imply that the initial $L_\delta R_\pi L_\pi R_\delta$ can be replaced by an $RGL$ path for all $r$, since existence of a real solution for $a_1$ and $a_2$ should be guaranteed. Hence, \eqref{eq: LRLR_path_matrix_equation} is differentiated twice, evaluated at $\delta = 0,$ and simplified using the expressions for the rotation matrices (given in Appendix~\ref{appsubsect: rotation_matrices}) and skew-symmetric matrices (given in the proof of Lemma~\ref{lemma: non-optimality_CCpiC}) to obtain
    \begin{align} \label{eq: second_order_equation_RGL_path}
    \begin{split}
        &\begin{pmatrix}
            \epsilon_{11} & 0 & \epsilon_{13} \\
            0 & -4 r^2 \left(3 - 4 r^2 \right)^2 & 0 \\
            -\epsilon_{13} & 0 & \epsilon_{33}
        \end{pmatrix} \\
        &= \left(2 r b_1 + b_2 \right) \begin{pmatrix}
            0 & \left(1 - 2 r^2 \right) & 0 \\
            \left(2 r^2 - 1 \right) & 0 & - 2 r \sqrt{1 - r^2} \\
            0 & - 2 r \sqrt{1 - r^2} & 0
        \end{pmatrix} \\
        & \quad\, + \begin{pmatrix}
            \gamma_{11} & 0 & \gamma_{13} \\
            0 & - \left(2 r a_1 + a_2 \right)^2 & 0 \\
            -\gamma_{13} & 0 & \gamma_{33}
        \end{pmatrix},
    \end{split}
    \end{align}
    where 
    \begin{align*}
        \epsilon_{11} &= -4r^2 \left(8 r^4 - 10 r^2 + 3 \right), \,\, \epsilon_{13} = 2 r \left(4 r^2 - 3 \right) \sqrt{1 - r^2}, \\
        \epsilon_{33} &= 8 r^2 \left(4 r^2 - 3 \right) \left(r^2 - 1 \right), \\
        \gamma_{11} &= \left(1 - 2 r^2 \right) \left(4 r \left(r a_1^2 + a_1 a_2 \right) - \left(1 - 2 r^2 \right) a_2^2 \right), \\
        \gamma_{13} &= 2 \sqrt{1 - r^2} \left(\left(4 r^2 - 1 \right) \left(r a_1^2 + a_1 a_2 \right) + r \left(2 r^2 - 1 \right) a_2^2 \right), \\
        \gamma_{33} &= 4 r \left(1 - r^2 \right) \left(2 r a_1^2 + 2 a_1 a_2 + r a_2^2 \right).
    \end{align*}
    From \eqref{eq: second_order_equation_RGL_path}, it follows that $2 r b_1 + b_2 = 0.$ Further, noting that $\gamma_{33}$ must equal $\epsilon_{33},$ it follows that
    \begin{align*}
        2 r a_1^2 + 2 a_1 a_2 + r a_2^2 = 2 r \left(3 - 4 r^2 \right).
    \end{align*}
    Substituting for $a_1$ from \eqref{eq: relation_a1_a2_CCpiCpiC_nonoptimality} in the above equation and simplifying, the equation for $a_2$ is obtained as
    \begin{align*}
        \frac{2 r^2 - 1}{2 r} a_2^2 = 4 r \left(3 - 4 r^2 \right) \left(2 r^2 -1 \right).
    \end{align*}
    Since $r > \frac{1}{\sqrt{2}}$ is considered, $2 r^2 - 1 \neq 0.$ Therefore, $a_2^2 = 8 r^2 \left(3 - 4 r^2 \right).$ For $\frac{1}{\sqrt{2}} < r \leq \frac{\sqrt{3}}{2},$ a solution for $a_2$ can be obtained. The positive solution is chosen, since it is desired for $\frac{d \phi_2 (\delta)}{d \delta}\big{|}_{\delta = 0} = a_2 \geq 0$ to be positive. This is because $\phi_2 (\delta)|_{\delta = 0} = 0,$ and for a path to be feasible, all angles must be non-negative. Therefore, the solution for $a_2$, and the corresponding solution for $a_1$ obtained using \eqref{eq: relation_a1_a2_CCpiCpiC_nonoptimality}, are chosen to be $a_2 = 2 \sqrt{2} r \sqrt{3 - 4 r^2}, a_1 = 4 r^2 - 3 - \sqrt{2} \sqrt{3 - 4 r^2}.$
    It follows that $a_1 \leq 0$ from the obtained expression for $\frac{1}{\sqrt{2}} < r \leq \frac{\sqrt{3}}{2}$. 
    \begin{remark}
        Using the obtained expressions for $a_1$ and $a_2,$ the following equations can be easily verified:
        \begin{align*}
            \gamma_{11} &= \epsilon_{11}, \,\,
            \gamma_{13} = \epsilon_{13}, \,\,
            4 r^2 \left(3 - 4 r^2 \right)^2 = \left(2 r a_1 + a_2 \right)^2.
        \end{align*}
        Hence, \eqref{eq: second_order_equation_RGL_path} is satisfied for the chosen solution for $a_1$ and $a_2,$ with $b_1$ and $b_2$ satisfying $2 r b_1 + b_2 = 0.$
    \end{remark}
    
    Since $\frac{d \Delta (\delta)}{d \delta}|_{\delta = 0}$ was shown to be greater than zero for all $r$, and real solutions for $a_1$ and $a_2$ were obtained for $\frac{1}{\sqrt{2}} < r \leq \frac{\sqrt{3}}{2},$ it follows that the constructed alternate $RGL$ path connects the same initial and final configurations as the $L_\delta R_\pi L_\pi R_\delta$ path at a lower path length. Hence, the $L_\delta R_\pi L_\pi R_\delta$ path is non-optimal for $r \leq \frac{\sqrt{3}}{2}$ (using Lemma~\ref{lemma: non-optimality_CCpiC} for $r \leq \frac{1}{\sqrt{2}}$). Using a similar proof for the $R_\delta L_\pi R_\pi L_\delta$ path, it follows that a $C C_\pi C_\pi C$ path is non-optimal for $r \leq \frac{\sqrt{3}}{2}$.
\end{proof}

\section{Candidate optimal paths for Case 2: $\lambda = 1$} \label{sect: Case_2}

Having characterized the optimal path for $\lambda = 0$ (abnormal controls) in the previous section, we now consider the case $\lambda = 1$, corresponding to normal controls (see Fig.~\ref{fig: overview_cases_results}). This is the second case needed to complete the characterization of the optimal path. Recall that for $\lambda = 1$, we identified three subcases depending on the value of $\lambda_{H_{12}}$ (refer to Section~\ref{sect: optimal_paths}).

For $\lambda = 1$, the evolution of $H_{12} (s)$ given in \eqref{eq: phase_portrait} reduces to
\begin{align} \label{eq: ellipse_equations} 
\begin{split}
    f &:= \left(|H_{12} (s)| - \frac{U_{max}}{1 + U_{max}^2} \right)^2 + \left(\frac{1}{\sqrt{1 + U_{max}^2}} \frac{d H_{12} (s)}{ds} \right)^2 \\
    &= \lambda_{H_{12}}^2.
\end{split}
\end{align}

The solutions for $H_{12} (s)$ and $\frac{dH_{12} (s)}{ds}$ 
can be obtained using the expression of $\kappa$ from \eqref{eq: optimal_control_inputs} as
\begin{align}
    H_{12} (s) &= \begin{cases}
        \lambda_{H_{12}} \sin{\left(\frac{s}{r} - \phi_{H_{12}}' \right)} + \frac{U_{max}}{1 + U_{max}^2}, & H_{12} (s) \geq 0 \\
        \lambda_{H_{12}} \sin{\left(\frac{s}{r} - \phi_{H_{12}} \right)} - \frac{U_{max}}{1 + U_{max}^2}, & H_{12} (s) < 0
    \end{cases}, \label{eq: parameterized_expression_H12} \\
    \frac{dH_{12} (s)}{ds} &= \begin{cases}
        \frac{\lambda_{H_{12}}}{r} \cos{\left(\frac{s}{r} - \phi_{H_{12}}' \right)}, & H_{12} (s) \geq 0 \\
        \frac{\lambda_{H_{12}}}{r} \cos{\left(\frac{s}{r} - \phi_{H_{12}} \right)}, & H_{12} (s) < 0
    \end{cases}, \label{eq: parameterized_expression_dH12_ds}
\end{align}
where the relationship between $\phi_{H_{12}}$ and $\phi_{H_{12}}'$ can be obtained leveraging continuity of $H_{12}$ and $\frac{dH_{12}}{ds}$ at $H_{12} = 0.$ In the above equations, $r = \frac{1}{\sqrt{1 + U_{max}^2}}$ was used.\footnotemark\footnotetext{If $\lambda_{H_{12}} \leq 0$, $\lambda_{H_{12}}$ can be equivalently reparameterized such that $\lambda_{H_{12}} \geq 0$ by offsetting the phase $\phi_{H_{12}}'$ and $\phi_{H_{12}}$ by $\pi$ radians. Hence, $\lambda_{H_{12}} \geq 0$ can be considered without loss of generality.}

\begin{remark}
    The derived relationship in~\eqref{eq: ellipse_equations} between $H_{12}(s)$ and $\frac{d H_{12}(s)}{ds}$ is independent of the value of $\lambda_{H_{12}}$, and is therefore applicable to Cases~2.1, 2.2, and 2.3 discussed in subsequent subsections.
\end{remark}

We now derive the candidate optimal paths for the three subcases, depending on the value of $\lambda_{H_{12}}$ (see Fig.~\ref{fig: overview_cases_results}), in the following subsections.

\subsection{Case 2.1: $\lambda = 1,$ $\lambda_{H_{12}} < \frac{U_{max}}{1 + U_{max}^2}$}

In the following, we address the first subcase, namely $\lambda_{H_{12}} < U_{\max}/\left(1 + U_{\max}^2 \right)$.
In this case, the optimal path is claimed to be of type $C$. To this end, the following lemma is crucial to construct the phase portrait of $H_{12},$ using which this result is obtained.
\begin{lemma} \label{lemma: case_lambda_1_lambdaH12_less_than}
    For $\lambda = 1, \lambda_{H_{12}} < U_{max}/\left(1 + U_{max}^2\right),$ if $H_{12} (0) < 0,$ then $H_{12} (s) < 0 \,\forall\, s > 0.$ Similarly, if $H_{12} (0) > 0,$ then $H_{12} (s) > 0 \, \forall \, s$. Furthermore, $H_{12} (s) \not\equiv 0.$ 
\end{lemma}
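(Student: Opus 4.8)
The plan is to exploit the fact that the quantity $f$ appearing in \eqref{eq: ellipse_equations} is a \emph{pointwise} invariant of the motion, which confines $|H_{12}(s)|$ to an annulus that stays strictly away from $0$ precisely when $\lambda_{H_{12}} < \frac{U_{max}}{1 + U_{max}^2}$. Concretely, the identity $f = \lambda_{H_{12}}^2$ holds for every $s$, not merely on an interval of constant sign of $H_{12}$: it was obtained from the constancy of $J := h_1^2 + h_2^2 + H_{12}^2$ along \eqref{eq: adjoint_equations} together with the algebraic relation $h_1(s) = \lambda - U_{max}|H_{12}(s)|$, which itself follows from $H \equiv 0$ in \eqref{eq: Hamiltonian_rewritten} and $\kappa(s) = -U_{max}\sgn(H_{12}(s))$ in \eqref{eq: optimal_control_inputs}, and remains valid even at instants where $H_{12}$ vanishes.

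First I would read off, from $f = \lambda_{H_{12}}^2$ and nonnegativity of $\bigl(\frac{1}{\sqrt{1+U_{max}^2}}\frac{dH_{12}(s)}{ds}\bigr)^2$, that $\bigl|\,|H_{12}(s)| - \frac{U_{max}}{1+U_{max}^2}\,\bigr| \le \lambda_{H_{12}}$ for all $s$, hence $|H_{12}(s)| \ge \frac{U_{max}}{1+U_{max}^2} - \lambda_{H_{12}}$. Under the standing hypothesis this lower bound is strictly positive, so $H_{12}(s) \ne 0$ for every $s$. Since $H_{12}$ is continuous (recall $\frac{dH_{12}}{ds} = -h_2$ is continuous from \eqref{eq: adjoint_equations}) and nowhere zero, the intermediate value theorem forces $H_{12}$ to keep the sign of $H_{12}(0)$ on all of $s$, which is exactly the first two assertions. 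The final assertion $H_{12}(s) \not\equiv 0$ is immediate from the same strictly positive lower bound (equivalently: $H_{12}\equiv 0$ would give $\frac{dH_{12}}{ds}\equiv 0$ and then \eqref{eq: ellipse_equations} would force $\lambda_{H_{12}} = \frac{U_{max}}{1+U_{max}^2}$, contradicting the hypothesis; Lemma~\ref{lemma: optimal_control_actions} already rules out $H_{12}\equiv 0$ in the $\lambda=0$ branch).

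An equivalent, more phase-portrait-flavored route avoids the invariant entirely: if $H_{12}(0) < 0$, then on the maximal interval $[0, s^\ast)$ on which $H_{12} < 0$ one has $\kappa \equiv U_{max}$, so \eqref{eq: parameterized_expression_H12} gives $H_{12}(s) = \lambda_{H_{12}}\sin\!\left(\frac{s}{r} - \phi_{H_{12}}\right) - \frac{U_{max}}{1+U_{max}^2} \le \lambda_{H_{12}} - \frac{U_{max}}{1+U_{max}^2} < 0$; were $s^\ast$ finite, continuity would give $H_{12}(s^\ast) = 0$, contradicting this strict negativity, so $s^\ast = \infty$, and symmetrically when $H_{12}(0) > 0$ using $\kappa \equiv -U_{max}$ and the first case of \eqref{eq: parameterized_expression_H12}. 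I expect no genuine obstacle; the only point needing care is the bookkeeping at $H_{12} = 0$ — ensuring that either the invariant $f = \lambda_{H_{12}}^2$ or the closed-form solution is applied legitimately across a potential sign change — and the pointwise validity of $f = \lambda_{H_{12}}^2$ is exactly what settles it. This then feeds directly into the phase portrait used for Case~2.1, showing each trajectory lies on a single loop in one half-plane and hence the optimal path is a single $C$ segment.
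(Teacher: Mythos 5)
Your proposal is correct and takes essentially the same approach as the paper: both arguments rest on the pointwise invariant \eqref{eq: ellipse_equations}, from which the paper derives a contradiction by evaluating at a hypothetical zero of $H_{12}$ (obtaining $\lambda_{H_{12}} \geq \frac{U_{max}}{1 + U_{max}^2}$), while you equivalently extract the uniform bound $|H_{12}(s)| \geq \frac{U_{max}}{1 + U_{max}^2} - \lambda_{H_{12}} > 0$ and invoke continuity to preserve the sign. The only addition is your explicit remark that the invariant holds across instants where $H_{12}$ vanishes, which the paper leaves implicit.
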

\begin{proof}
    Suppose $H_{12} (0) < 0.$ If $H_{12} (s) = 0$ for some $s,$ then from \eqref{eq: ellipse_equations}, 
    \begin{align*}
        \frac{U_{max}^2}{\left(1 + U_{max}^2 \right)^2} &\leq \frac{U_{max}^2}{\left(1 + U_{max}^2 \right)^2} + \left(\frac{1}{\sqrt{1 + U_{max}^2}} \frac{dH_{12} (s)}{ds} \right)^2 \\
        & = \lambda_{H_{12}}^2 < \frac{U_{max}^2}{\left(1 + U_{max}^2 \right)^2},
    \end{align*}
    which is a contradiction. Similarly, it can be proved that if $H_{12} (0) > 0,$ $H_{12} (s) > 0 \, \forall \, s$. 
    Furthermore, from \eqref{eq: ellipse_equations}, it follows immediately that if $H_{12} \equiv 0,$ then $\lambda_{H_{12}} = U_{max}/\left(1 + U_{max}^2\right),$ which leads to a contradiction. Hence, the lemma is proved.
\end{proof}

The phase portrait obtained for $\lambda = 1, \lambda_{H_{12}} < U_{max}/\left(1 + U_{max}^2 \right)$ using \eqref{eq: ellipse_equations} and the previous observations is shown in Fig.~\ref{fig: phase_portrait_H12_lambda_1_lambdaH12_less_than}. Using Lemma~\ref{lemma: case_lambda_1_lambdaH12_less_than} and Fig.~\ref{fig: phase_portrait_H12_lambda_1_lambdaH12_less_than}, it can be observed that the candidate optimal path is $L, R,$ for $\lambda = 1, \lambda_{H_{12}} < U_{max}/\left(1 + U_{max}^2 \right)$.

\begin{figure}[htb!]
    \centering
    \includegraphics[width = 0.8\linewidth]{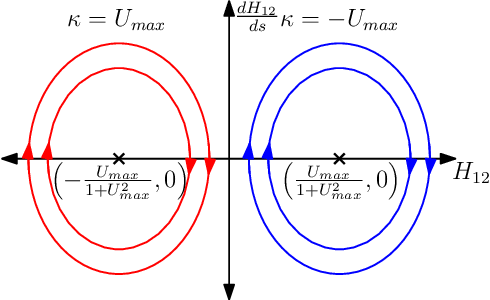}
    \caption{Phase portrait of $H_{12}$ for $\lambda = 1$ and $\lambda_{H_{12}} < \frac{U_{max}}{1 + U_{max}^2}$}
    \label{fig: phase_portrait_H12_lambda_1_lambdaH12_less_than}
\end{figure}

\subsection{Case 2.2: $\lambda = 1,$ $\lambda_{H_{12}} = \frac{U_{max}}{1 + U_{max}^2}$}

The equation corresponding to $H_{12} (s) < 0$ and $H_{12} (s) \geq 0$, given in \eqref{eq: ellipse_equations}, are ellipses, whose origin lies on the $H_{12}-$axis. The intersection of the ellipse corresponding to $H_{12} (s) < 0$ with the $H_{12}-$axis are at $\pm \lambda_{H_{12}} - \left(U_{max}/\left(1 + U_{max}^2 \right)\right),$ whereas for the ellipse corresponding to $H_{12} (s) \geq 0,$ the intersection points are at $\pm \lambda_{H_{12}} + \left(U_{max}/\left(1 + U_{max}^2 \right)\right).$ For $\lambda_{H_{12}} = U_{max}/\left(1 + U_{max}^2 \right),$ the intersection points of the ellipse corresponding to $H_{12} (s) < 0$ are $0$ and $-2U_{max}/\left(1 + U_{max}^2 \right),$ whereas for the ellipse corresponding to $H_{12} (s) \geq 0,$ the intersection points are $0$ and $2U_{max}/\left(1 + U_{max}^2 \right)$. Using these observations and \eqref{eq: ellipse_equations}, the phase portrait of $H_{12}$ is obtained as shown in Fig.~\ref{fig: phase_portrait_H12_lambda_1_lambdaH12_equal}. It should be noted that in this case, $H_{12}$ can be identically zero since \eqref{eq: ellipse_equations} will be satisfied. Hence, a $G$ segment can be part of the optimal path.

The candidate paths for this case can be obtained using the following lemma.

\begin{figure}[htb!]
    \centering
    \includegraphics[width = 0.8\linewidth]{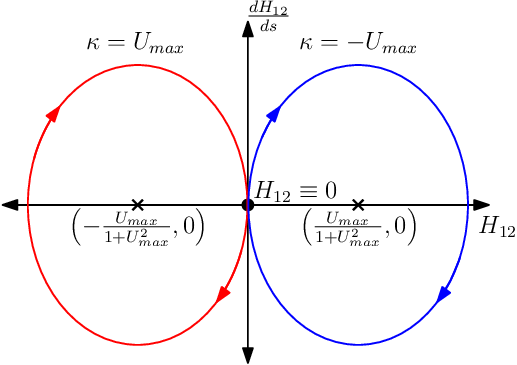}
    \caption{Phase portrait of $H_{12}$ for $\lambda = 1$ and $\lambda_{H_{12}} = \frac{U_{max}}{1 + U_{max}^2}$}
    \label{fig: phase_portrait_H12_lambda_1_lambdaH12_equal}
\end{figure}

\begin{lemma}
    For $\lambda = 1,$ $\lambda_{H_{12}} = \frac{U_{max}}{1 + U_{max}^2},$ the optimal path is $LGL, LGR, RGL, RGR,$ or a degenerate path of one of the four paths.
\end{lemma}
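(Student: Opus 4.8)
The plan is to read the candidate optimal path directly from the phase portrait in Fig.~\ref{fig: phase_portrait_H12_lambda_1_lambdaH12_equal}, much as in Case~2.1. In this case $\lambda_{H_{12}} = \frac{U_{max}}{1+U_{max}^2}$, so the two ellipses (one governing the region $H_{12}\ge 0$, one governing $H_{12}<0$) are tangent to the line $H_{12}=0$: the ellipse for $H_{12}\ge 0$ passes through the origin with a horizontal tangent there, and likewise the ellipse for $H_{12}<0$. First I would establish the structure of the trajectory in the phase plane. Starting on the $H_{12}>0$ branch, the trajectory follows the upper ellipse clockwise (since $\dot H_{12} = -h_2$ and the dynamics in \eqref{eq: adjoint_equations} fix the orientation) until it reaches the origin $(0,0)$; at that point $\dot H_{12}=0$ as well, so $(H_{12},\dot H_{12})=(0,0)$ is an equilibrium of the governing ODE (this is exactly the degenerate ``$G$'' case noted just before the lemma, where $H_{12}\equiv 0$ satisfies \eqref{eq: ellipse_equations}). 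Hence once the trajectory reaches the origin it can either remain there for some arc length — a $G$ segment — or it can leave along either ellipse. The key observation is that at the origin the trajectory has a genuine choice of which half-plane to enter next, because both ellipses are tangent there; this is what allows the turn direction to switch across the $G$ segment, giving all four combinations $LGL, LGR, RGL, RGR$.

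The main steps, in order, would be: (i) show that any trajectory starting with $H_{12}(0)\neq 0$ stays in the same half-plane until it first hits $H_{12}=0$, and that when it hits it does so with $\dot H_{12}=0$ — this follows directly from \eqref{eq: ellipse_equations} with $\lambda_{H_{12}}=\frac{U_{max}}{1+U_{max}^2}$, since $H_{12}=0$ forces $\dot H_{12}=0$; (ii) show that the point $(0,0)$ is an equilibrium, so the trajectory can dwell there (producing a $G$ arc of arbitrary length, possibly zero), using the ODE \eqref{eq: evolution_H12} / \eqref{eq: adjoint_equations} and $h_1 = \lambda = 1$ there; (iii) show the trajectory can then re-enter either half-plane, again reaching $H_{12}=0$ only with $\dot H_{12}=0$; and (iv) argue that the trajectory cannot visit $H_{12}=0$ transversally and cannot chain together more than one ``$C$ then $G$ then $C$'' pattern — i.e.\ after leaving the origin along an ellipse, the trajectory returns to the origin only after a full half-loop of that ellipse, and an optimal (minimal-length) path will not contain a completed full ellipse loop (this is the place where one invokes that a completely traversed $C$ segment here would be a $C_{2\pi}$-type or longer arc that can be shortcut, or more simply that the portrait has no mechanism to produce a further switch without passing through the origin, and each passage through the origin is where the single allowed $G$ sits). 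Combining (i)--(iv), the sequence of segments is at most $C$, then $G$, then $C$, with each $C \in \{L,R\}$ chosen independently, and every shorter concatenation ($C$, $CG$, $GC$, $CGC$ with one $C$ absent, or just $G$) is a degenerate case; this yields exactly $LGL, LGR, RGL, RGR$ or a degenerate path of one of these.

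The step I expect to be the main obstacle is (iv): ruling out longer concatenations such as $CGCGC$ or $CGC C$. One must argue that an optimal trajectory passes through the origin of the phase plane at most once with a nontrivial dwell, and that it does not make two separate excursions onto the ellipses separated by $G$ arcs. I would handle this by noting that leaving the origin onto, say, the upper ellipse and returning to the origin requires traversing a full closed half of that ellipse, which corresponds to a $C$ segment of arc angle $2\pi$ (a completely traversed $C$ in the sense of the second Remark would be $C_\pi$ only for the $\lambda=0$ portrait; here the relevant closed excursion is larger), and such a segment is immediately non-optimal — either by a direct shortcut argument on the sphere, or by appeal to the general fact that an optimal path cannot contain a $C$ arc that returns the phase point to the same state, since deleting the loop leaves boundary conditions and the adjoint unchanged while strictly decreasing length. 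Once at most one origin-dwell and at most one excursion on each side of it is permitted, the list $\{LGL, LGR, RGL, RGR\}$ together with their degeneracies is forced.
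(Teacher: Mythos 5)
Your proposal is correct and takes essentially the same approach as the paper's proof: both rest on the observation that the two ellipses in the phase portrait meet the line $H_{12}=0$ only at the origin, where $\frac{dH_{12}}{ds}=0$ as well, so any intermediate $C$ segment (one that begins and ends at an inflection) corresponds to a complete circuit of an ellipse and hence has arc angle $2\pi$ (the paper obtains $2n\pi$ from the explicit trigonometric parameterization), making it a removable closed loop on the sphere --- exactly how the paper rules out $GCG$, $GCC$, and $CCC$ subpaths. The only quibble is wording in your step (iv): the returning excursion traverses the \emph{full} ellipse, not a ``full half-loop,'' though your stated conclusion (arc angle $2\pi$) is the right one.
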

\begin{proof}
    To prove the claim in the lemma, it suffices to show that non-trivial paths of type $GCG,$ $GCC,$ $CCG,$ or $CCC$ are non-optimal. Using non-optimality of such paths, from Fig.~\ref{fig: phase_portrait_H12_lambda_1_lambdaH12_equal}, the lemma immediately follows. For this purpose, it is claimed that in the $GCG,$ $GCC,$ $CCG,$ and $CCC$ paths, the angle of the middle $C$ segment is $2 \pi$. The argument for the same will rely on the observation from the phase portrait given in Fig.~\ref{fig: phase_portrait_H12_lambda_1_lambdaH12_equal} that the inflection point in an optimal path is $H_{12} (s) = 0, \frac{dH_{12} (s)}{ds} = 0.$ Suppose the middle $C$ segment is $L$, and the arc length corresponding to the inflection points of the path be denoted by $s_1$ and $s_2$. Hence, $H_{12} (s_1) = H_{12} (s_2) = 0$ and $\frac{dH_{12}}{ds} (s_1) = \frac{dH_{12}}{ds} (s_2) = 0.$ Further, $H_{12} (s) < 0$ for all $s \in (s_1, s_2).$ Using the parameterized expressions for $H_{12}$ and $\frac{dH_{12}}{ds}$ given in \eqref{eq: parameterized_expression_H12} and \eqref{eq: parameterized_expression_dH12_ds}, respectively, and noting that $H_{12} (s)$ and $\frac{dH_{12} (s)}{ds}$ are continuous,
    \begin{align*}
        \sin{\left(\frac{s_i}{r} - \phi_{H_{12}} \right)} &= \frac{U_{max}}{1 + U_{max}^2} \frac{1}{\lambda_{H_{12}}} = 1, \\
        \cos{\left(\frac{s_i}{r} - \phi_{H_{12}} \right)} &= 0,
    \end{align*}
    for $i = 1, 2.$ Hence, $\frac{1}{r} \left(s_2 - s_1 \right),$ which denotes the arc angle of the $L$ segment, is equal to $2 n \pi,$ where $n = 1, 2, \cdots$ ($n \neq 0$ since path is non-trivial). 
    Hence, $GLG,$ $GLR,$ $RLG,$ and $RLR$ paths are non-optimal. Using a similar proof, $GRG,$ $GRL,$ $LRG,$ and $LRL$ paths can be shown to be non-optimal. Hence, using Fig.~\ref{fig: phase_portrait_H12_lambda_1_lambdaH12_equal} and non-optimality of $GCG,$ $GCC,$ $CCG,$ and $CCC$ paths, the optimal path is of type $CGC$ or a degenerate path of the same.
\end{proof}

\subsection{Case 2.3: $\lambda = 1,$ $\lambda_{H_{12}} > \frac{U_{max}}{1 + U_{max}^2}$}

As the intersection points of the ellipse corresponding to $H_{12} (s) < 0$ from \eqref{eq: ellipse_equations} with the $H_{12}-$axis are at $\pm \lambda_{H_{12}} - U_{max}/\left(1 + U_{max}^2 \right),$ one of the intersection points lies to the right of the line $H_{12} = 0$ for $\lambda_{H_{12}} > U_{max}/\left(1 + U_{max}^2 \right)$. Therefore, the ellipse corresponding to $H_{12} (s) < 0$ intersects the line $H_{12} (s) = 0$ at two points. The coordinates for these points can be obtained from \eqref{eq: ellipse_equations} as $\left(0, \pm \sqrt{1 + U_{max}^2} \sqrt{\lambda_{H_{12}}^2 - U_{max}^2/\left(1 + U_{max}^2 \right)^2} \right)$. Similarly, the ellipse corresponding to $H_{12} (s) > 0$ from \eqref{eq: ellipse_equations} can be observed to intersect the $H_{12} = 0$ line at two points, whose coordinates are obtained as $\left(0, \pm \sqrt{1 + U_{max}^2} \sqrt{\lambda_{H_{12}}^2 - U_{max}^2/\left(1 + U_{max}^2 \right)^2} \right).$ Further, if $H_{12} \equiv 0,$ then \eqref{eq: ellipse_equations} is not satisfied; hence, a $G$ segment is not part of the optimal path. Using these observations, the phase portrait for $\lambda_{H_{12}} > U_{max}/\left(1 + U_{max}^2 \right)$ can be obtained as shown in Fig.~\ref{fig: phase_portrait_H12_lambda_1_lambdaH12_greater}.
Furthermore, the following proposition follows using these observations and Fig.~\ref{fig: phase_portrait_H12_lambda_1_lambdaH12_greater}.

\begin{figure}[htb!]
    \centering
    \includegraphics[width = 0.8\linewidth]{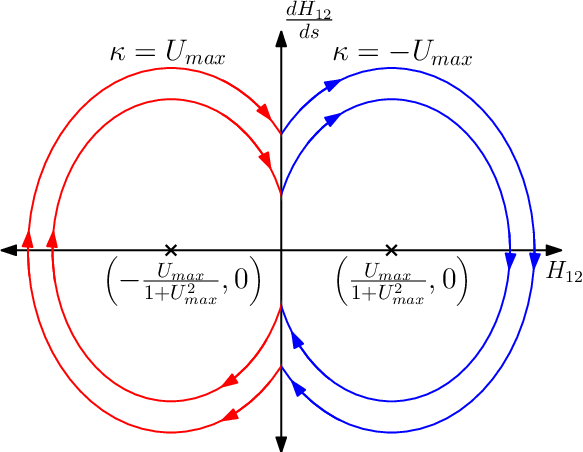}
    \caption{Phase portrait of $H_{12}$ for $\lambda = 1$ and $\lambda_{H_{12}} > \frac{U_{max}}{1 + U_{max}^2}$}
    \label{fig: phase_portrait_H12_lambda_1_lambdaH12_greater}
\end{figure}

\begin{proposition}
    For $\lambda = 1, \lambda_{H_{12}} > \frac{U_{max}}{1 + U_{max}^2},$ the optimal path is a concatenation of $C$ segments.
\end{proposition}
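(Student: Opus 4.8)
The plan is to show that the optimal trajectory cannot contain a $G$ segment, after which the claim follows from \eqref{eq: optimal_control_inputs} together with the phase portrait in Fig.~\ref{fig: phase_portrait_H12_lambda_1_lambdaH12_greater}: the only optimal control actions are $\kappa = 0, \pm U_{max}$, and $\kappa = 0$ is precisely the action that produces a great-circular arc. By \eqref{eq: optimal_control_inputs}, $\kappa \equiv 0$ on a sub-arc requires $H_{12} \equiv 0$ there, which in turn forces $\frac{dH_{12}}{ds} = -h_2 \equiv 0$ on that sub-arc by \eqref{eq: adjoint_equations}. Substituting $H_{12} = 0$ and $\frac{dH_{12}}{ds} = 0$ into the conserved relation \eqref{eq: ellipse_equations} would give $\lambda_{H_{12}}^2 = \frac{U_{max}^2}{(1+U_{max}^2)^2}$, contradicting the standing assumption $\lambda_{H_{12}} > \frac{U_{max}}{1+U_{max}^2}$. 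Hence no portion of the optimal path is a great-circular arc.

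It then remains to see that the path is assembled from $L$ and $R$ arcs joined at the zeros of $H_{12}$. Since $H_{12}$ and $\frac{dH_{12}}{ds}$ are continuous, at any $s$ with $H_{12}(s) = 0$ relation \eqref{eq: ellipse_equations} gives $\left(\frac{dH_{12}}{ds}(s)\right)^2 = (1+U_{max}^2)\left(\lambda_{H_{12}}^2 - \frac{U_{max}^2}{(1+U_{max}^2)^2}\right) > 0$, so every zero of $H_{12}$ is simple: $H_{12}$ changes sign there and, by \eqref{eq: optimal_control_inputs}, $\kappa$ jumps between $+U_{max}$ and $-U_{max}$, i.e., the path switches between an $R$ arc and an $L$ arc. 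On each open interval between consecutive zeros $H_{12}$ has constant sign, so $\kappa$ is a constant $\pm U_{max}$ and the corresponding sub-arc is a single $C$ segment. This is exactly the behaviour of the closed orbits encircling the origin in Fig.~\ref{fig: phase_portrait_H12_lambda_1_lambdaH12_greater}, each loop about the origin contributing one $C$ segment, which establishes that the optimal path is a concatenation of $C$ segments.

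I do not anticipate a substantive obstacle; the argument is essentially a one-line application of the conserved quantity \eqref{eq: ellipse_equations}. The only care needed is in the bookkeeping: one should note that the first and last $C$ segments need not be completely traversed, so the proposition constrains only the segment \emph{types}, consistent with the convention used throughout Section~\ref{sect: optimal_paths}; a sharper statement about the intermediary $C$ segments (that their arc angles exceed $\pi$) would be argued separately, in the same spirit as the $\lambda = 0$ case.
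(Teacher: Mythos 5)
Your proposal is correct and follows essentially the same route as the paper: the paper likewise rules out $G$ segments by noting that $H_{12} \equiv 0$ would violate the conserved relation \eqref{eq: ellipse_equations} when $\lambda_{H_{12}} > \frac{U_{max}}{1 + U_{max}^2}$, and reads off from the phase portrait that the ellipse arcs cross $H_{12} = 0$ with nonzero $\frac{dH_{12}}{ds}$, so the control only switches between $\pm U_{max}$. Your write-up simply makes explicit the computation of the crossing velocity $\pm\sqrt{1+U_{max}^2}\sqrt{\lambda_{H_{12}}^2 - \frac{U_{max}^2}{\left(1+U_{max}^2\right)^2}}$ that the paper states in the paragraph preceding the proposition.
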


For the optimal path, which is a concatenation of $C$ segments, the following claim is made regarding the angle of the middle $C$ segments. It should be noted here that while the following result has been shown in \cite{3D_Dubins_sphere}, a simpler proof is shown here utilizing the phase portrait for deriving the result.
\begin{lemma} \label{lemma: lambda_1_CCC_middle_arc_angle}
    For $\lambda = 1$ and $\lambda_{H_{12}} > \frac{U_{max}}{1 + U_{max}^2},$ the angle of the middle arcs of a non-trivial optimal path with concatenation of $C$ segments is $\pi + \beta,$ where $\beta \in (0, \pi),$ i.e., the optimal path is of type $C_\alpha C_{\pi + \beta} C_{\pi + \beta} \cdots C_{\pi + \beta} C_\gamma,$ where $0 \leq \alpha, \gamma \leq \pi + \beta.$
\end{lemma}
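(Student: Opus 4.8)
The plan is to read the result straight off the phase portrait in Fig.~\ref{fig: phase_portrait_H12_lambda_1_lambdaH12_greater} together with the closed-form expressions \eqref{eq: parameterized_expression_H12}--\eqref{eq: parameterized_expression_dH12_ds} for $H_{12}$ and $\frac{dH_{12}}{ds}$ on this branch. Recall that here the two ellipses cross the line $H_{12}=0$ transversally (at such a crossing $\frac{dH_{12}}{ds}=\pm\sqrt{1+U_{max}^2}\sqrt{\lambda_{H_{12}}^2-\frac{U_{max}^2}{(1+U_{max}^2)^2}}\neq 0$), so each switch between an $L$ and an $R$ segment is an isolated transversal zero of $H_{12}$; within one $C$ segment $H_{12}$ keeps a constant sign, so it has no interior zero, and a ``middle'' $C$ segment (one flanked by $C$ segments on both sides) is completely traversed in the sense of Section~\ref{sect: optimal_paths}: it runs from a zero of $H_{12}$ to the next zero while $H_{12}$ stays negative (for $C=L$) or positive (for $C=R$) on its open interior.

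First I would fix a middle segment, say $C=L$ over $s\in[s_1,s_2]$, so $\kappa\equiv U_{max}$ and by \eqref{eq: parameterized_expression_H12} one has $H_{12}(s)=\lambda_{H_{12}}\sin\!\big(\tfrac{s}{r}-\phi_{H_{12}}\big)-\tfrac{U_{max}}{1+U_{max}^2}$ on this interval. Put $\psi(s):=\tfrac{s}{r}-\phi_{H_{12}}$, which increases strictly with $s$, and $\theta_0:=\arcsin\!\big(\tfrac{U_{max}}{(1+U_{max}^2)\lambda_{H_{12}}}\big)$; since $U_{max}>0$ and $\lambda_{H_{12}}>\tfrac{U_{max}}{1+U_{max}^2}$, we get $\theta_0\in\big(0,\tfrac{\pi}{2}\big)$. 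The endpoint conditions $H_{12}(s_1)=H_{12}(s_2)=0$ force $\sin\psi(s_i)=\sin\theta_0$, while the sign conditions $\frac{dH_{12}}{ds}(s_1)<0$, $\frac{dH_{12}}{ds}(s_2)>0$ (the trajectory enters the region $H_{12}<0$ at $s_1$ and leaves it at $s_2$), read off from \eqref{eq: parameterized_expression_dH12_ds}, force $\cos\psi(s_1)<0$ and $\cos\psi(s_2)>0$. Taking $\psi(s_1)=\pi-\theta_0$ without loss of generality, the next zero of $H_{12}$ reached as $\psi$ increases is at $\psi=2\pi+\theta_0$: any intermediate value with $\sin\psi=\sin\theta_0$ would produce an interior zero of $H_{12}$, which is excluded. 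Hence $\psi$ increases by exactly $\pi+2\theta_0$ across the segment, and since the arc length equals $r$ times the arc angle, the arc angle of the $L$ segment is $\psi(s_2)-\psi(s_1)=\pi+2\theta_0$. Writing $\beta:=2\theta_0\in(0,\pi)$ settles this segment; the same computation with the other branch of \eqref{eq: parameterized_expression_H12}--\eqref{eq: parameterized_expression_dH12_ds} handles a middle $R$ segment, and because $J$ and $\lambda$ (hence $\lambda_{H_{12}}$, hence $\theta_0$) are constant along the trajectory, the same $\beta$ is obtained for every middle segment.

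To conclude I would observe that the first and last segments terminate at (respectively emanate from) a zero crossing of $H_{12}$ but need not start (end) at one, so the same bookkeeping bounds their arc angles by $\pi+\beta$; this yields the stated form $C_\alpha C_{\pi+\beta}\cdots C_{\pi+\beta}C_\gamma$ with $0\le\alpha,\gamma\le\pi+\beta$, and in particular covers the two-segment case $C_\alpha C_\gamma$ (no middle arc). The step I expect to require the most care is the modulo-$2\pi$ bookkeeping in the middle paragraph: one must argue that $\psi$ realizes exactly the arc from $\pi-\theta_0$ to $2\pi+\theta_0$ and not an additional full turn, which rests precisely on the transversality of the zero crossings; everything else is substitution into formulas already derived and a reading of Fig.~\ref{fig: phase_portrait_H12_lambda_1_lambdaH12_greater}.
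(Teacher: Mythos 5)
Your proposal is correct and follows essentially the same route as the paper: both read the inflection-point values of $H_{12}$ and $\frac{dH_{12}}{ds}$ off the phase portrait of Fig.~\ref{fig: phase_portrait_H12_lambda_1_lambdaH12_greater}, substitute into \eqref{eq: parameterized_expression_H12}--\eqref{eq: parameterized_expression_dH12_ds}, and deduce the arc angle $\pi+2\theta_0$ with $\theta_0=\arcsin\bigl(\tfrac{U_{max}}{(1+U_{max}^2)\lambda_{H_{12}}}\bigr)$, which coincides with the paper's $\pi+2\tan^{-1}\bigl(\tfrac{U_{max}}{\sqrt{\lambda_{H_{12}}^2(1+U_{max}^2)^2-U_{max}^2}}\bigr)$. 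Your explicit modulo-$2\pi$ bookkeeping and the remark on the first/last segments make the argument slightly more self-contained than the paper's appeal to Fig.~\ref{fig: angle_L_segment_completely_traversed}, but the underlying idea is identical.
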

\begin{proof}
    Consider an $L$ segment that is part of a non-trivial optimal path such that the considered segment is an intermediary segment in the path, i.e., not the first or the last segment of the path. Hence, this segment is completely traversed. Let arc lengths $s_1$ and $s_2$ represent the inflection points before and after the $L$ segment, respectively. Hence, from Fig.~\ref{fig: phase_portrait_H12_lambda_1_lambdaH12_greater} and the previously derived expressions for the inflection point, it follows that $H_{12} (s_1) = H_{12} (s_2) = 0,$ and $\frac{dH_{12} (s_1)}{ds} = - \sqrt{1 + U_{max}^2} \sqrt{\lambda_{H_{12}}^2 - U_{max}^2/\left(1 + U_{max}^2 \right)^2},$ $\frac{dH_{12} (s_2)}{ds} = \sqrt{1 + U_{max}^2} \sqrt{\lambda_{H_{12}}^2 - U_{max}^2/\left(1 + U_{max}^2 \right)^2}.$ Using the obtained values for $H_{12}$ and $\frac{dH_{12}}{ds}$ at the inflection points, and the closed-form expressions for $H_{12}$ and $\frac{dH_{12}}{ds}$ for the $L$ segment from \eqref{eq: parameterized_expression_H12} and \eqref{eq: parameterized_expression_dH12_ds} which are evaluated at the inflection points (due to continuity), 
    it follows that 
    \begin{align*}
        \sin{\left(\frac{s_i}{r} - \phi_{H_{12}} \right)} &= \frac{U_{max}}{1 + U_{max}^2} \frac{1}{\lambda_{H_{12}}}, \\
        \cos{\left(\frac{s_i}{r} - \phi_{H_{12}} \right)} &= \frac{(-1)^i}{\lambda_{H_{12}}} \sqrt{\lambda_{H_{12}}^2 - \frac{U_{max}^2}{\left(1 + U_{max}^2 \right)^2}}.
    \end{align*}
    Here, $i = 1, 2.$ Further, $\sqrt{1 + U_{max}^2} = \frac{1}{r}$ was used.
    The obtained conditions can be represented as shown in Fig.~\ref{fig: angle_L_segment_completely_traversed}. It should be noted that the expression for the arc angle of the $L$ segment, denoted by $\phi_L$, is desired to be obtained. The arc angle $\phi_L$ is related to $s_1$ and $s_2$ by $\phi_L = \left(s_2 - s_1\right)/r.$
    From Fig.~\ref{fig: angle_L_segment_completely_traversed}, it follows that $\phi_L = \pi + 2 \tan^{-1} \left(U_{max}/\sqrt{\lambda_{H_{12}}^2 \left(1 + U_{max}^2 \right)^2 - U_{max}^2} \right) = \pi + \beta,$ where $\beta \in (0, \pi)$ since $U_{max} > 0$, and $\lambda_{H_{12}}^2 \left(1 + U_{max}^2 \right)^2 - U_{max}^2 > 0$ since $\lambda_{H_{12}} > U_{max}/\left(1 + U_{max}^2 \right)$. Using a similar argument for an intermediary $R$ segment, it follows that the expression for the arc angle of the $R$ segment $\phi_R$ is the same as that of $\phi_L.$ Hence, the claim in the lemma follows.
    \begin{figure}[htb!]
        \centering
        \includegraphics[width = 0.7\linewidth]{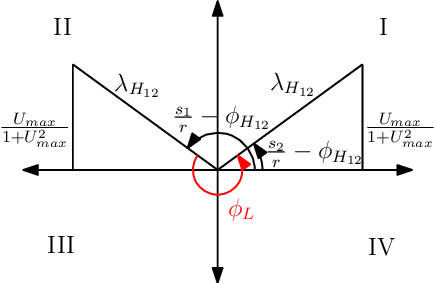}
        \caption{Angle of $L$ segment that is completely traversed for $\lambda_{H_{12}} > \frac{U_{max}}{1 + U_{max}^2}$}
        \label{fig: angle_L_segment_completely_traversed}
    \end{figure}
\end{proof}

Having provided a structure for the optimal path, an infinite list of candidate paths with concatenation of $C$ segments still exists. It is desired to obtain a finite list of candidate optimal paths. In \cite{3D_Dubins_sphere}, it was shown that a $C_\alpha C_{\pi + \beta} C_{\pi + \beta} C_\gamma$ path was non-optimal for $r \leq \frac{1}{2}$. While the authors conjectured that the candidate list of paths would contain more than three $C$ segments for $r > \frac{1}{2},$ the sufficient list of candidate paths for $r > \frac{1}{2}$ is unknown. To this end, the following claims are made:
\begin{itemize}
    \item A $C_\alpha C_{\pi + \beta} C_{\pi + \beta} C_{\pi + \beta} C_\gamma$ is non-optimal for $r \leq \frac{1}{\sqrt{2}}.$ Hence, for $\lambda_{H_{12}} > U_{max}/\left(1 + U_{max}^2 \right)$ and $r \leq \frac{1}{\sqrt{2}},$ the optimal path is of type $C_\alpha C_{\pi + \beta} C_{\pi + \beta} C_\gamma$ or a degenerate path of the same.
    \item A $C_\alpha C_{\pi + \beta} C_{\pi + \beta} C_{\pi + \beta} C_{\pi + \beta} C_\gamma$ is non-optimal for $r \leq \frac{\sqrt{3}}{2}.$ Hence, for $\lambda_{H_{12}} > U_{max}/\left(1 + U_{max}^2 \right)$ and $r \leq \frac{\sqrt{3}}{2}$, the optimal path is of type $C_\alpha C_{\pi + \beta} C_{\pi + \beta} C_{\pi + \beta} C_\gamma$ or a degenerate path of the same.
\end{itemize}
The proofs for these two claims are provided by Lemmas \ref{lemma: non-optimality_CCCCC} and \ref{lemma: claim2}, respectively.

\begin{lemma} \label{lemma: non-optimality_CCCCC}
    A non-trivial $CCCCC$ path is non-optimal for $r \leq \frac{1}{\sqrt{2}}$.
\end{lemma}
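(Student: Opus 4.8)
The plan is to run the perturbation-and-shortcut argument of Lemmas~\ref{lemma: non-optimality_CCpiC} and~\ref{lemma: nonoptimality_CCpiCpiC} after first paring the problem down to a single case. A non-trivial $CCCCC$ path cannot occur in Cases~2.1 or~2.2 (which produce only $C$ and $CGC$ paths), so it must come either from $\lambda=0$ or from $\lambda=1$ with $\lambda_{H_{12}}>\frac{U_{max}}{1+U_{max}^2}$. If $\lambda=0$, the proposition for that case forces the three interior, completely traversed turns to have angle $\pi$, so the path is $C_\alpha C_\pi C_\pi C_\pi C_\gamma$ with $\alpha,\gamma\in(0,\pi)$; then $C_\alpha C_\pi C_\pi$ together with a short initial arc $C_\delta$ of the third interior turn is a non-trivial $CC_\pi C_\pi C$ sub-path, non-optimal by Lemma~\ref{lemma: nonoptimality_CCpiCpiC} (since $\tfrac{1}{\sqrt{2}}\le\tfrac{\sqrt{3}}{2}$), contradicting optimality. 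If $\lambda=1$ with $\lambda_{H_{12}}>\frac{U_{max}}{1+U_{max}^2}$, Lemma~\ref{lemma: lambda_1_CCC_middle_arc_angle} forces the path to be $C_\alpha C_{\pi+\beta}C_{\pi+\beta}C_{\pi+\beta}C_\gamma$ with $\beta\in(0,\pi)$; applying the $CCCC$-non-optimality result of~\cite{3D_Dubins_sphere} to the sub-path $C_\alpha C_{\pi+\beta}C_{\pi+\beta}C_\delta$ disposes of $r\le\tfrac{1}{2}$, so it remains only to treat $r\in(\tfrac{1}{2},\tfrac{1}{\sqrt{2}}]$ directly.

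For that case, suppose for contradiction that $L_\alpha R_{\pi+\beta}L_{\pi+\beta}R_{\pi+\beta}L_\gamma$ (the case $C=L$; the other is symmetric) is optimal with $\alpha,\gamma>0$, and fix $0<\delta<\min(\alpha,\gamma)$. Optimality forces the sub-path $L_\delta R_{\pi+\beta}L_{\pi+\beta}R_{\pi+\beta}L_\delta$ to be a shortest path between its endpoints, so it suffices to exhibit a competitor $P(\delta)$ that equals it at $\delta=0$ and is strictly shorter for $0<\delta\ll1$. Mirroring the $GRG$ and $RGL$ shortcuts, $P(\delta)$ is obtained either by replacing the two outer tight turns with great-circle arcs or by absorbing them and splicing great-circle arcs among the interior turns; its arc angles $\phi_i(\delta)$ all vanish at $\delta=0$ and are expanded as $\phi_i(\delta)=a_i\delta+\tfrac12 b_i\delta^2$. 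The number of free angles is chosen to match the number of independent scalar equations in the $SO(3)$ identity $\mathbf{R}_{\mathrm{sub}}(\delta)=\mathbf{R}_{P}(\delta)$, the palindromic symmetry of both sides cutting that count down, exactly as happened in~\eqref{eq: LRL_path_matrix_equation} and~\eqref{eq: LRLR_path_matrix_equation}.

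With $\Delta(\delta)$ denoting the length of the sub-path minus that of $P(\delta)$, one has $\Delta(0)=0$, so it is enough to show $\frac{d\Delta}{d\delta}\big|_{\delta=0}>0$; this derivative is a linear combination of the $a_i$ with coefficients $\pm r$ and $\pm 1$, the analogue of~\eqref{eq: derivative_cost_difference_GRG_path} and~\eqref{eq: derivative_cost_difference_RGL_path}. To evaluate it I would differentiate $\mathbf{R}_{\mathrm{sub}}(\delta)=\mathbf{R}_{P}(\delta)$ once and twice at $\delta=0$, using $\frac{d\mathbf{R}_S}{d\delta}=\mathbf{R}_S\hat{\Omega}_S$ from the proof of Lemma~\ref{lemma: non-optimality_CCpiC}, the closed forms for $\mathbf{R}_L,\mathbf{R}_R,\mathbf{R}_G$ from~\cite{free_terminal_sphere}, and the generators $\hat{\Omega}_L,\hat{\Omega}_R,\hat{\Omega}_G$; comparing entries yields first-order relations (the analogue of~\eqref{eq: constraint_a1_a2_GRG_path}) and second-order relations among the $a_i$ and $b_i$, now carrying the extra parameter $\beta$. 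Solving these, picking the branch for which every perturbed angle of $P(\delta)$ stays non-negative (feasibility), and substituting back gives $\frac{d\Delta}{d\delta}\big|_{\delta=0}$ as an explicit function of $r$ and $\beta$, which must then be shown to be strictly positive for all $\beta\in(0,\pi)$ and all $r\in(\tfrac{1}{2},\tfrac{1}{\sqrt{2}}]$.

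The hard part, compared with the abnormal-control lemmas, is the free parameter $\beta$: one must (i) show that the chosen branch of $(a_i,b_i)$ is \emph{real and feasible} over the whole range $\beta\in(0,\pi)$ precisely when $r\le\tfrac{1}{\sqrt{2}}$ --- I expect the threshold to appear through a non-negativity/discriminant condition on a square, just as $a_2^2=4(1-2r^2)\ge0$ produced $r\le\tfrac{1}{\sqrt{2}}$ in Lemma~\ref{lemma: non-optimality_CCpiC} --- and (ii) prove the positivity of $\frac{d\Delta}{d\delta}\big|_{\delta=0}$ uniformly in $\beta$. Keeping the algebra consistent, so that (as in the Remarks of the earlier proofs) the remaining second-order entries are automatically satisfied and only a relation such as $2rb_1+b_2=0$ survives on the $b_i$, is the delicate bookkeeping step; the symmetric argument with $C=R$ then completes the lemma.
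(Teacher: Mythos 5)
Your reduction to the normal-control case with middle arcs $\pi+\beta$ is fine (and matches the paper's setup, which invokes Lemma~\ref{lemma: lambda_1_CCC_middle_arc_angle} to write the path as $R_\alpha L_{\pi+\beta}R_{\pi+\beta}L_{\pi+\beta}R_\gamma$), but from that point on your proposal is a plan rather than a proof: you never commit to a specific competitor family $P(\delta)$, never derive the first- and second-order relations among the $a_i$, and explicitly defer both the positivity of $\frac{d\Delta}{d\delta}\big|_{\delta=0}$ and the emergence of the threshold $r\le\frac{1}{\sqrt{2}}$ to computations ``to be shown.'' Those computations are the entire content of the lemma --- the extra parameter $\beta$ is exactly what makes this case harder than Lemmas~\ref{lemma: non-optimality_CCpiC} and~\ref{lemma: nonoptimality_CCpiCpiC} --- so as written the argument has a genuine gap: there is no verification that a feasible, strictly shorter competitor actually exists for all $\beta\in(0,\pi)$ and $r\in(\tfrac12,\tfrac{1}{\sqrt 2}]$.

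It is also worth noting that the paper avoids the perturbative machinery entirely here. It excises the subpath $L_\pi R_{\pi+\beta}L_\pi$ from the \emph{interior} block $L_{\pi+\beta}R_{\pi+\beta}L_{\pi+\beta}$ (so the outer angles $\alpha,\gamma$ and the small parameter $\delta$ play no role) and replaces it by an exact alternate $L_\phi R_{\pi-\beta}L_\phi$ satisfying
$\mathbf{R}_L(\phi)\mathbf{R}_R(\pi-\beta)\mathbf{R}_L(\phi)=\mathbf{R}_L(\pi)\mathbf{R}_R(\pi+\beta)\mathbf{R}_L(\pi)$.
Comparing the $(2,2)$ entries gives $A\sin\phi+B\cos\phi=0$ with an explicit solution $\phi=\pi-\theta\in[0,\pi]$; the condition $r\le\frac{1}{\sqrt{2}}$ enters through $B\ge0$ (which places $\theta\in[0,\pi]$) and $A^2+B^2\neq0$, and the remaining matrix entries are verified in the appendix. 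Shorter length is then immediate from $\phi\le\pi$ and $\pi-\beta<\pi+\beta$, with no derivative of a length difference needed. If you want to salvage your perturbative route you would have to carry out all of the omitted algebra, including a uniform-in-$\beta$ positivity argument; the paper's exact construction is the cleaner path and is the one you should follow.
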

\begin{proof}
    For a non-trivial optimal $RLRLR$ path, all intermediary $C$ segments are of the same angle that is greater than $\pi$ from Lemma~\ref{lemma: lambda_1_CCC_middle_arc_angle}. Hence, consider a non-trivial optimal $R_{\alpha} L_{\pi + \beta} R_{\pi + \beta} L_{\pi + \beta} R_{\gamma}$ path, where $\beta \in (0, \pi), \alpha > 0, \gamma > 0$.     
    It is claimed that the considered path is non-optimal for $r \leq \frac{1}{\sqrt{2}}$ as the $L_{\pi} R_{\pi + \beta} L_{\pi}$ subpath can be replaced with an alternate $L_\phi R_{\pi - \beta} L_\phi$ path, where $\phi \leq \pi.$ It should be noted that since $\beta \in (0, \pi) \implies \pi - \beta < \pi + \beta,$ and $\phi \leq \pi$, showing that such an alternate path exists and connects the same initial and final configurations as the initial path is sufficient to show that the alternate path is of a lower length. 
    An illustration of the initial $L_{\pi} R_{\pi + \beta} L_{\pi}$ subpath and the alternative $L_\phi R_{\pi - \beta} L_\phi$ path is shown in Fig.~\ref{fig: Alternate_LRL_path_CCCCC_nonoptimality}.

    \begin{figure}[htb!]
        \centering
        \includegraphics[width = 0.8\linewidth]{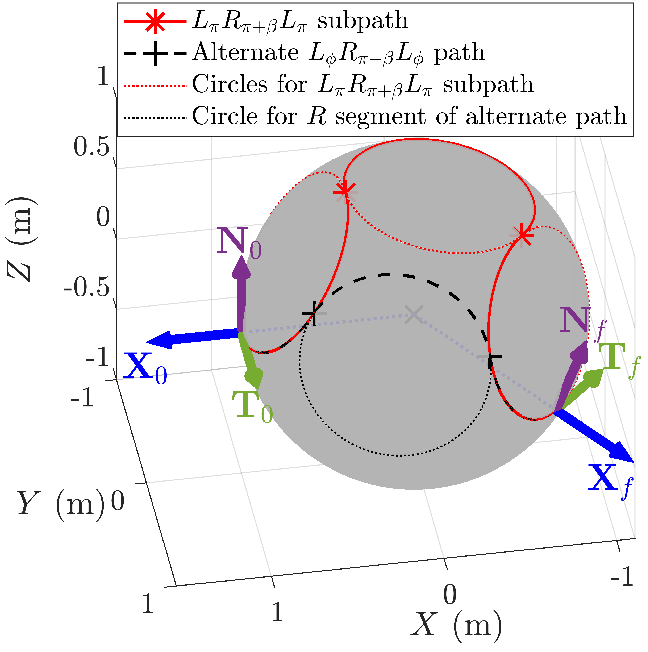}
        \caption{A $L_\phi R_{\pi - \beta} L_\phi$ path connecting the same configurations as the $L_{\pi} R_{\pi + \beta} L_{\pi}$ path for $r = 0.55, \beta = 40^\circ$. Animations of a vehicle moving along the initial and alternate paths are available on our GitHub page.}
        \label{fig: Alternate_LRL_path_CCCCC_nonoptimality}
    \end{figure}

    As it is claimed that the $L_\pi R_{\pi + \beta} L_\pi$ subpath can be replaced with an $L_\phi R_{\pi - \beta} L_\phi$ path, the net rotation matrix corresponding to the initial path and the alternate path should be equal. Hence,
    \begin{equation} \label{eq: matrix_equation_CCCCC_nonoptimality}
        \mathbf{R}_L (\phi) \mathbf{R}_R (\pi - \beta) \mathbf{R}_L (\phi) = \mathbf{R}_L (\pi) \mathbf{R}_R (\pi + \beta) \mathbf{R}_L (\pi).
    \end{equation}
    A solution for $\phi$ is obtained by comparing the entries in the second row and second column in the matrix in the LHS (denoted by $\alpha_{22}$) and RHS (denoted by $\beta_{22}$).
    Equating the expressions for $\alpha_{22}$ and $\beta_{22},$ the equation obtained is given by
    \begin{align} \label{eq: solving_phi_CCCCC_nonoptimality}
    \begin{split}
        &s^2_{\phi} \underbrace{\left(4 r^2 (r^2 - 1) + c_{\beta} \left(1 + (1 - 2 r^2)^2 \right) \right)}_{A} \\
        &+ s_{\phi} c_{\phi} \underbrace{\left(2 s_{\beta} (1 - 2 r^2) \right)}_{B} = 0,
    \end{split}
    \end{align}
    where $s_{\phi} := \sin{\phi}, c_{\phi} := \cos{\phi}.$
    A trivial solution to the above equation is $s_{\phi} = 0,$ which is discarded. Another solution to the above equation can be obtained by solving $A \sin{\phi} + B \cos{\phi} = 0,$
    which is the solution of interest. As $\beta \in (0, \pi)$ and since $r \leq \frac{1}{\sqrt{2}}$ is considered, $B \geq 0.$ It should be noted that when $r < \frac{1}{\sqrt{2}},$ $B > 0$. Hence, a solution for $\phi$ can always be obtained. For $r = \frac{1}{\sqrt{2}},$ $B = 0.$ However, the expression of $A$ for $r = \frac{1}{\sqrt{2}}$ is given by $A = 4 \left(\frac{1}{2} \right) \left(-\frac{1}{2} \right) + c_{\beta} = -1 + c_{\beta},$ which cannot equal zero as $0 < \beta < \pi.$ Hence, a solution for $\phi$ can always be obtained from the above equation, as $A^2 + B^2 \neq 0$ for $r \leq \frac{1}{\sqrt{2}}$.

    Let $s_{\theta} := \frac{B}{\sqrt{A^2 + B^2}}$ and $c_{\theta} := \frac{A}{\sqrt{A^2 + B^2}}$. Hence, the desired equation to be solved can be rewritten as $\sin{\left(\phi + \theta \right)} = 0$.
    As $B \geq 0,$ $s_{\theta} = \frac{B}{\sqrt{A^2 + B^2}} \geq 0.$ Hence, $\theta \in [0, \pi].$ Therefore, one of the solutions for the above equation is
    \begin{align} \label{eq: solution_phi_nonoptimality_CCCCC_path}
        \phi &= \pi - \theta,
    \end{align}
    which ensures that $\phi \in [0, \pi]$. Hence, if the alternate $L_\phi R_{\pi - \beta} L_\phi$ path connects the same initial and final configurations as the initial $L_\pi R_{\pi + \beta} L_\pi$ subpath, it is immediate that the $L_\pi R_{\pi + \beta} L_\pi$ subpath is non-optimal.

    To ensure that the constructed alternate path connects the same initial and final configurations as the initial path, all terms in the matrix equation in \eqref{eq: matrix_equation_CCCCC_nonoptimality} are compared using the obtained solution in Appendix~\ref{appsubsect: CCCCC_non_optimality_proof}. The constructed alternate path is shown to connect the same initial and final configurations as the initial path in Appendix~\ref{appsubsect: CCCCC_non_optimality_proof} since all terms in the matrix equation are shown to be equal using the obtained solution. Therefore, the $L_\pi R_{\pi + \beta} L_{\pi}$ subpath is non-optimal for $r \leq \frac{1}{\sqrt{2}},$ which implies that the considered $RLRLR$ path is non-optimal for $r \leq \frac{1}{\sqrt{2}}$. Using a similar proof for an $LRLRL$ path, it can be observed that a non-trivial $CCCCC$ path is not optimal for $r \leq \frac{1}{\sqrt{2}}.$
\end{proof}



The construction of an alternate path to show non-optimality of a $CCCCCC$ path in the lemma that follows is similar to the path construction used in the previous proof.

\begin{lemma} \label{lemma: claim2}
    A non-trivial $CCCCCC$ path is non-optimal for $r \leq \frac{\sqrt{3}}{2}$.
\end{lemma}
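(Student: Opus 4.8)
The plan is to push the argument of Lemma~\ref{lemma: non-optimality_CCCCC} one segment further. By the $L\!\leftrightarrow\!R$ mirror symmetry it suffices to rule out a non-trivial optimal $LRLRLR$ path, and by Lemma~\ref{lemma: lambda_1_CCC_middle_arc_angle} its four interior arcs all carry the common angle $\pi+\beta$ with $\beta\in(0,\pi)$, so the path has the form $L_\alpha R_{\pi+\beta} L_{\pi+\beta} R_{\pi+\beta} L_{\pi+\beta} R_\gamma$ with $\alpha,\gamma>0$. Since Lemma~\ref{lemma: non-optimality_CCCCC} already excludes five---hence six---$C$ segments whenever $r\le\tfrac{1}{\sqrt2}$, it is enough to treat $r\in\bigl(\tfrac{1}{\sqrt2},\tfrac{\sqrt3}{2}\bigr]$.

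I would then shortcut an interior four-arc block. Using $\mathbf{R}_R(\pi+\beta)=\mathbf{R}_R(\beta)\mathbf{R}_R(\pi)$ and $\mathbf{R}_L(\pi+\beta)=\mathbf{R}_L(\pi)\mathbf{R}_L(\beta)$, peel residual arcs $R_\beta$ and $L_\beta$ off the second and fifth segments and isolate the block $R_\pi L_{\pi+\beta} R_{\pi+\beta} L_\pi$ between them; I look for a symmetric three-arc block $R_{\phi_1} L_{\phi_2} R_{\phi_1}$ with the same endpoints, i.e.
\begin{equation*}
\mathbf{R}_R(\phi_1)\mathbf{R}_L(\phi_2)\mathbf{R}_R(\phi_1)=\mathbf{R}_R(\pi)\mathbf{R}_L(\pi+\beta)\mathbf{R}_R(\pi+\beta)\mathbf{R}_L(\pi).
\end{equation*}
Expanding both sides with the closed-form rotation matrices of \cite{free_terminal_sphere} (and $\mathbf{R}_L(\theta)=D\,\mathbf{R}_R(\theta)\,D$ with $D=\mathrm{diag}(1,1,-1)$, which is what makes the right-hand side lie in the image of a symmetric $RLR$ block), I would extract from the symmetric $(2,2)$ entry---exactly as in Lemma~\ref{lemma: non-optimality_CCCCC}---an equation $\widetilde A\sin\phi_1+\widetilde B\cos\phi_1=0$ after discarding the spurious root $\sin\phi_1=0$, solve it for $\phi_1$, and recover $\phi_2$ from a further entry. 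Reglued, the path becomes $L_\alpha R_{\beta+\phi_1} L_{\phi_2} R_{\phi_1} L_\beta R_\gamma$, of total arc length $r(\alpha+\gamma+2\beta+2\phi_1+\phi_2)$ against $r(\alpha+\gamma+4\pi+4\beta)$ for the original; since $\beta>0$, this is strictly shorter once $\phi_1\le\pi$ and $\phi_2<2\pi$, proving the original $CCCCCC$ path non-optimal.

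The main obstacle is the feasibility analysis. One must verify that $\widetilde A^2+\widetilde B^2\neq0$ so that a root exists, that it can be taken with $\phi_1\in[0,\pi]$ and $\phi_2\ge0$, and---this is where the quoted radius enters---that all of this holds precisely for $r\le\tfrac{\sqrt3}{2}$. As in Lemmas~\ref{lemma: nonoptimality_CCpiCpiC} and~\ref{lemma: non-optimality_CCCCC}, I expect the obstruction to reduce to the sign of a single discriminant-type coefficient (the analogue of the condition $3-4r^2\ge0$ in Lemma~\ref{lemma: nonoptimality_CCpiCpiC}), vanishing at $r=\tfrac{\sqrt3}{2}$, so that for $r>\tfrac{\sqrt3}{2}$ this particular shortcut is no longer admissible---which is exactly why the threshold is $\tfrac{\sqrt3}{2}$ rather than something larger. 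The remaining, purely computational, check that the chosen $(\phi_1,\phi_2)$ makes every entry of the matrix identity agree (not just the two used to solve it) I would relegate to an appendix, as was done for Lemma~\ref{lemma: non-optimality_CCCCC}; the symmetric computation on an $RLRLRL$ path, together with Lemma~\ref{lemma: non-optimality_CCCCC} for $r\le\tfrac{1}{\sqrt2}$, then gives the statement for every $CCCCCC$ path and all $r\le\tfrac{\sqrt3}{2}$.
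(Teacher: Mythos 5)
Your overall strategy (reduce to $r\in\bigl(\tfrac{1}{\sqrt2},\tfrac{\sqrt3}{2}\bigr]$ via Lemma~\ref{lemma: non-optimality_CCCCC}, fix the interior angles at $\pi+\beta$ via Lemma~\ref{lemma: lambda_1_CCC_middle_arc_angle}, peel off residual $\beta$-arcs, and shortcut the interior block through a matrix identity) is exactly the paper's. The gap is in the shape of the replacement block, and it is fatal as stated: the equation $\mathbf{R}_R(\phi_1)\mathbf{R}_L(\phi_2)\mathbf{R}_R(\phi_1)=\mathbf{R}_R(\pi)\mathbf{R}_L(\pi+\beta)\mathbf{R}_R(\pi+\beta)\mathbf{R}_L(\pi)$ has no solution for generic $(r,\beta)$. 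With $E=\mathrm{diag}(1,-1,1)$ one checks $E\hat{\Omega}_S E=-\hat{\Omega}_S$, hence $E\,\mathbf{R}_S(\theta)\,E=\mathbf{R}_S(\theta)^{T}$, and therefore $EM^{T}E$ equals the product taken in reverse order. A palindromic word is its own reversal, so your candidate $M=\mathbf{R}_R(\phi_1)\mathbf{R}_L(\phi_2)\mathbf{R}_R(\phi_1)$ obeys $M=EM^{T}E$, i.e.\ $M_{31}=M_{13}$ and $M_{32}=-M_{23}$ (this is the structure of the $\alpha_{ij}$ matrix in the appendix). Your target block is instead \emph{anti}-palindromic---its reversal is its $L\leftrightarrow R$ mirror, which by $\mathbf{R}_L(\theta)=D\mathbf{R}_R(\theta)D$ returns the original word---so it obeys the other symmetry $M_{31}=-M_{13}$, $M_{32}=+M_{23}$, exactly the structure of the $\bar\beta_{ij}$ matrix in \eqref{eq: CCCCCC_nonoptimality_LRLR_net_matrix}. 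A matrix in both classes must have all four off-block entries $M_{13},M_{31},M_{23},M_{32}$ equal to zero, and $\bar\beta_{13},\bar\beta_{23}$ do not vanish for generic $(r,\beta)$ in the range of interest. So the system is over-determined: solving the $(2,2)$ entry and ``recovering $\phi_2$ from a further entry'' still leaves entries that cannot agree. The mirror identity you invoke does not place the target in the image of a symmetric $RLR$ block; it is precisely what places it in the opposite symmetry class.

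The repair, and what the paper actually does, is to replace the four-arc block by another \emph{anti-palindromic four-arc} block: $L_\pi R_{\pi+\beta}L_{\pi+\beta}R_\pi$ is exchanged for $L_\phi R_{\pi-\beta}L_{\pi-\beta}R_\phi$, which lies in the same symmetry class, leaving a single unknown $\phi$ determined by the single scalar equation $C\cos2\phi+D\sin2\phi=C$ from the $(2,2)$ entry. The threshold enters where you guessed a discriminant would, but through the sign of $D=2\sin\beta\,\bigl(2(r^2-1)+(2r^2-1)\cos\beta\bigr)$: this is strictly negative precisely because $2r^2-1>0$ and $4r^2-3\le 0$, i.e.\ for $r\in\bigl(\tfrac{1}{\sqrt2},\tfrac{\sqrt3}{2}\bigr]$, which yields a root $\phi\in(0,\pi)$. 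The length then drops because the two middle arcs shrink from $\pi+\beta$ to $\pi-\beta$ and the outer ones from $\pi$ to $\phi<\pi$; the entry-by-entry consistency check is relegated to the appendix, as you anticipated.
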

\begin{proof}
    For a non-trivial optimal $RLRLRL$ path, all intermediary $C$ segments are of the same angle that is greater than $\pi$ from Lemma~\ref{lemma: lambda_1_CCC_middle_arc_angle}. Hence, consider a non-trivial optimal $R_\alpha L_{\pi + \beta} R_{\pi + \beta} L_{\pi + \beta} R_{\pi + \beta} L_{\gamma}$ path, where $\beta \in (0, \pi), \alpha > 0, \gamma > 0.$ It should be noted that the considered path is non-optimal for $r \leq \frac{1}{\sqrt{2}}$, since the $L_{\pi + \beta} R_{\pi + \beta} L_{\pi + \beta}$ subpath is shown to be non-optimal using Lemma~\ref{lemma: non-optimality_CCCCC}. Hence, it is sufficient to show non-optimality for $r \in \left(\frac{1}{\sqrt{2}}, \frac{\sqrt{3}}{2} \right].$

    It is claimed that the considered path is non-optimal for $r \in \left(\frac{1}{\sqrt{2}}, \frac{\sqrt{3}}{2} \right]$ as the $L_\pi R_{\pi + \beta} L_{\pi + \beta} R_\pi$ subpath can be replaced with an alternate $L_\phi R_{\pi - \beta} L_{\pi - \beta} R_\phi$ path, where $\phi \leq \pi.$ Similar to the argument in the previous lemma, showing that the alternate path exists and connects the same initial and final configurations as the initial path is sufficient to show that the alternate path is of lower length. An illustration of the initial $L_{\pi} R_{\pi + \beta} L_{\pi + \beta} R_{\pi}$ subpath and the alternative $L_\phi R_{\pi - \beta} L_{\pi - \beta} R_\phi$ path is shown in Fig.~\ref{fig: Alternate_LRLR_path_CCCCCC_nonoptimality}.

    \begin{figure}[htb!]
        \centering
        \includegraphics[width = 0.8\linewidth]{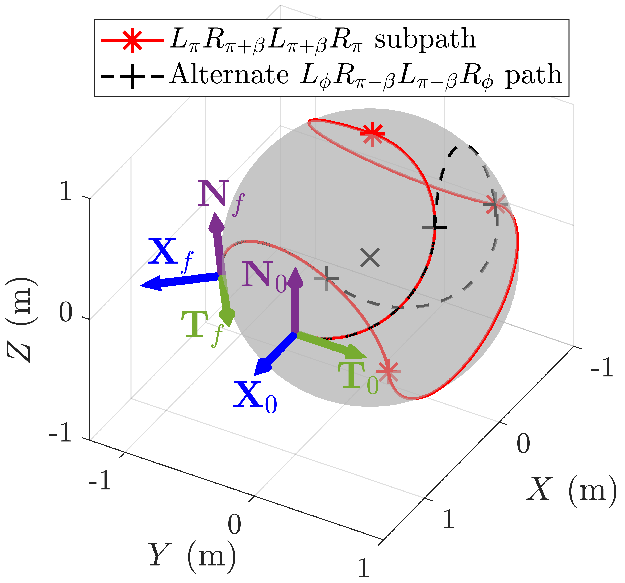}
        \caption{A $L_\phi R_{\pi - \beta} L_{\pi - \beta} R_\phi$ path connecting the same configurations as the $L_{\pi} R_{\pi + \beta} L_{\pi + \beta} R_{\pi}$ path for $r = 0.72, \beta = 40^\circ$. Animations of a vehicle moving along the initial and alternate paths are available on our GitHub page.}
        \label{fig: Alternate_LRLR_path_CCCCCC_nonoptimality}
    \end{figure}

    As it is claimed that the $L_\pi R_{\pi + \beta} L_{\pi + \beta} R_\pi$ subpath can be replaced with an $L_\phi R_{\pi - \beta} L_{\pi - \beta} R_\phi$ path, the net rotation matrix corresponding to the initial path and the alternate path should be equal. Hence,
    \begin{align} \label{eq: matrix_equation_CCCCCC_nonoptimality}
    \begin{split}
        &\mathbf{R}_L (\phi) \mathbf{R}_R (\pi - \beta) \mathbf{R}_L (\pi - \beta) \mathbf{R}_R (\phi) \\
        &= \mathbf{R}_L (\pi) \mathbf{R}_R (\pi + \beta) \mathbf{R}_L (\pi + \beta) \mathbf{R}_R (\pi).
    \end{split}
    \end{align}
    A solution for $\phi$ is obtained by comparing the entries $\eta_{22}$ and $\bar{\beta}_{22}$ in the LHS and RHS, respectively.
    It should be noted here that $\eta_{22}$ and $\bar{\beta}_{22}$ are terms in the second row and second column in the matrix in the LHS and RHS, respectively. 
    The equation obtained by setting $\eta_{22} = \bar{\beta}_{22}$ and simplifying is given by
    \begin{align} \label{eq: solving_phi_CCCCCC_nonoptimality}
        C \cos{2 \phi} + D \sin{2 \phi} = C,
    \end{align}
    where $r^2$ was canceled in the LHS and RHS. Here,
    \begin{align*}
        C &= 2 \left(3 r^2 - 2 \right) \left(r^2 - 1 \right) + 4 \left(2 r^2 - 1 \right) \left(r^2 - 1 \right) \cos{\beta} \\
        &\quad\, + \left(2 r^4 - 2 r^2 + 1 \right) \cos{2 \beta}, \\
        D &= 2 \sin{\beta} \left(2 \left(r^2 - 1 \right) + \left(2 r^2 - 1 \right) \cos{\beta} \right).
    \end{align*}
    It is claimed that \eqref{eq: solving_phi_CCCCCC_nonoptimality} can be used to solve for $\phi$ for $r \in \left(\frac{1}{\sqrt{2}}, \frac{\sqrt{3}}{2} \right]$. Noting that $\beta \in \left(0, \pi \right) \implies \sin{\beta} > 0$, and $2 r^2 - 1 > 0$ for $r$ in the chosen range, $D < 0$ for the considered range of $r$ since
    \begin{align*}
        2 \left(r^2 - 1 \right) + \left(2 r^2 - 1 \right) \cos{\beta} < 2 \left(r^2 - 1 \right) + \left(2 r^2 - 1 \right) \leq 0.
    \end{align*}
    Therefore, \eqref{eq: solving_phi_CCCCCC_nonoptimality} can be used to solve for $\phi$.

    Dividing both sides of \eqref{eq: solving_phi_CCCCCC_nonoptimality} by $\sqrt{C^2 + D^2},$ which is greater than zero since $D < 0,$ and defining an angle $\delta$ such that $c_{\delta} := C/\sqrt{C^2 + D^2}, s_{\delta} := D/\sqrt{C^2 + D^2},$ \eqref{eq: solving_phi_CCCCCC_nonoptimality} can be rewritten as
    \begin{align*}
        \cos{\left(2 \phi - \delta \right)} = \frac{C}{\sqrt{C^2 + D^2}}.
    \end{align*}
    A solution for $\phi$ given by 
    \begin{align} \label{eq: solution_phi_nonoptimality_CCCCCC_path}
        \phi = \delta - \pi,    
    \end{align}
    solves this equation, as $\cos{\left(2 \phi - \delta \right)} = \cos{\left(2 \delta - 2 \pi - \delta \right)} = \cos{\delta},$
    which, by definition of $\delta$, equals $C/\sqrt{C^2 + D^2}$. It should also be noted that since $D < 0,$ $\delta \in \left(\pi, 2 \pi \right)$ as $s_{\delta} < 0.$ Hence, for this choice of $\phi,$ $\phi \in \left(0, \pi \right).$ It is claimed that this solution of $\phi$ ensures that the alternate $L_\phi R_{\pi - \beta} L_{\pi - \beta} R_\phi$ path constructed connects the same initial and final configurations as the initial $L_\pi R_{\pi + \beta} L_{\pi + \beta} R_\pi$ path, i.e., \eqref{eq: matrix_equation_CCCCCC_nonoptimality} is satisfied.

    Using the solution obtained for $\phi,$ the other terms in the matrix equation in \eqref{eq: matrix_equation_CCCCCC_nonoptimality} are desired to be verified. The verification of the same is provided in Appendix~\ref{appsubsect: CCCCCC_non_optimality_proof}. Hence, the constructed alternate path connects the same initial and final configurations as the initial path. Further, the alternate path is shorter than the initial path as $0 < \phi < \pi$. Hence, the initially considered $RLRLRL$ path is non-optimal for $r \in \left(\frac{1}{\sqrt{2}}, \frac{\sqrt{3}}{2} \right].$ Using a similar proof for an $LRLRLR$ path, it can be observed that a non-trivial $CCCCCC$ path is not optimal for $r \leq \frac{\sqrt{3}}{2}.$
\end{proof}



Using the results shown in this section, which are summarized in Fig.~\ref{fig: overview_cases_results}, and the results in \cite{monroy, 3D_Dubins_sphere}, the theorem below follows.
\begin{theorem}
    The optimal path types for a Dubins vehicle on a sphere for turning radius $r$ up to $\frac{\sqrt{3}}{2}$ is given in Table~\ref{tab: theorem_candidate_paths}.
\end{theorem}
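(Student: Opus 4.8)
The plan is to regard this theorem as an assembly step: every substantive fact has already been established in the case analysis of Section~\ref{sect: optimal_paths}, so the proof amounts to organizing those facts by the adjoint value $\lambda$ and then reconciling the resulting candidate lists across the three regimes $r \le \tfrac{1}{2}$, $\tfrac{1}{2} < r \le \tfrac{1}{\sqrt{2}}$, and $\tfrac{1}{\sqrt{2}} < r \le \tfrac{\sqrt{3}}{2}$, taking care of which abnormal-control paths are already subsumed as degenerate paths of normal-control paths and which are not.

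First I would dispatch the abnormal branch $\lambda = 0$. The Proposition of Case~1 says an optimal abnormal path is a concatenation of $C$ segments in which every completely traversed segment has arc angle exactly $\pi$; Lemma~\ref{lemma: non-optimality_CCpiC} then rules out a non-trivial $CC_\pi C$ for $r \le \tfrac{1}{\sqrt{2}}$, so in that range the abnormal candidates reduce to $CC$ and its degenerate paths, while Lemma~\ref{lemma: nonoptimality_CCpiCpiC} rules out $CC_\pi C_\pi C$ for $r \le \tfrac{\sqrt{3}}{2}$, leaving $CC_\pi C$ and its degenerate paths for $\tfrac{1}{\sqrt{2}} < r \le \tfrac{\sqrt{3}}{2}$. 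Next I would dispatch the normal branch $\lambda = 1$ through Cases~2.1--2.3: Case~2.1 gives only $C$ via Lemma~\ref{lemma: case_lambda_1_lambdaH12_less_than} and its phase portrait, Case~2.2 gives $CGC$ or a degenerate path of it via the corresponding lemma, and Case~2.3 gives concatenations of $C$ segments whose interior arcs share the common angle $\pi + \beta$ with $\beta \in (0,\pi)$ by Lemma~\ref{lemma: lambda_1_CCC_middle_arc_angle}; the segment count is then bounded by Lemma~\ref{lemma: non-optimality_CCCCC} ($CCCCC$ non-optimal for $r \le \tfrac{1}{\sqrt{2}}$) and Lemma~\ref{lemma: claim2} ($CCCCCC$ non-optimal for $r \le \tfrac{\sqrt{3}}{2}$). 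This yields normal candidates $CGC$ and $CCCC$ for $r \le \tfrac{1}{\sqrt{2}}$, and $CGC$ and $CCCCC$ for $\tfrac{1}{\sqrt{2}} < r \le \tfrac{\sqrt{3}}{2}$; for $r \le \tfrac{1}{2}$ the sharper non-optimality of $C_\alpha C_{\pi+\beta} C_{\pi+\beta} C_\gamma$ from \cite{3D_Dubins_sphere}, re-derived here via the phase portrait, collapses this further to $CGC$ and $CCC$.

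Finally I would merge the two branches. For $r \le \tfrac{1}{\sqrt{2}}$ the abnormal path $CC$ is a degenerate path of $CCC$ (respectively $CCCC$), so it contributes nothing new, and the lists become $\{CGC, CCC\}$ for $r \le \tfrac{1}{2}$ and $\{CGC, CCCC\}$ for $\tfrac{1}{2} < r \le \tfrac{1}{\sqrt{2}}$ up to degenerate paths, recovering in particular the $r = \tfrac{1}{\sqrt{2}}$ synthesis of \cite{monroy}. For $\tfrac{1}{\sqrt{2}} < r \le \tfrac{\sqrt{3}}{2}$, however, $CC_\pi C$ is \emph{not} a degenerate path of $CGC$ nor of $CCCCC$, since the interior arc of $CCCCC$ has angle $\pi + \beta > \pi$ while the middle arc of $CC_\pi C$ has angle exactly $\pi$; hence $CC_\pi C$ must be retained as a genuinely new candidate, giving $\{CGC, CC_\pi C, CCCCC\}$ up to degenerate paths. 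Tabulating these three regimes is exactly Table~\ref{tab: theorem_candidate_paths}.

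The main obstacle is not any single calculation — those live in the individual lemmas — but the bookkeeping in this last merging step: establishing precisely which members of the abnormal list are already captured by the degenerate-path closure of the normal list, and making the distinction between a $\pi$-arc and a $(\pi+\beta)$-arc sharp enough to justify that $CC_\pi C$ genuinely survives for $r > \tfrac{1}{\sqrt{2}}$ while being absorbed for $r \le \tfrac{1}{\sqrt{2}}$. Everything else is a careful transcription of results already proved above together with those of \cite{monroy} and \cite{3D_Dubins_sphere}.
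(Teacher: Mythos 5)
Your proposal is correct and takes essentially the same route as the paper: the paper gives no separate argument for the theorem beyond stating that it follows from the results summarized in Fig.~\ref{fig: overview_cases_results} together with \cite{monroy} and \cite{3D_Dubins_sphere}, and your assembly---splitting by $\lambda$, bounding the number of $C$ segments via Lemmas~\ref{lemma: non-optimality_CCpiC}, \ref{lemma: nonoptimality_CCpiCpiC}, \ref{lemma: non-optimality_CCCCC}, and \ref{lemma: claim2}, and observing that the abnormal $CC$ is subsumed while $CC_\pi C$ is not---is precisely that bookkeeping made explicit. The only detail you gloss over is that Table~\ref{tab: theorem_candidate_paths} carves out $r = \tfrac{1}{\sqrt{2}}$ as its own row, where \cite{monroy}'s non-optimality of the $CC_{\pi+\beta}C_{\pi+\beta}$ subpath removes even the four-segment path from the list, whereas you fold that case into $r \le \tfrac{1}{\sqrt{2}}$; this only makes your candidate list a (still valid) superset at that single value of $r$.
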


\begin{remark}
    For $r = \frac{1}{\sqrt{2}},$ a $CC_{\pi + \beta}C_{\pi + \beta}$ path is non-optimal for $\beta \in (0, \pi)$, as showed in \cite{monroy}. Hence, a $CC_{\pi + \beta}C_{\pi + \beta}C$ path is non-optimal.
\end{remark}

\section{Path Construction and Results} \label{sect: results}

Using the derived candidate optimal paths connecting a given initial and final configurations, it is now desired to construct the paths. Though the derived results corresponded to a unit sphere, we desire to derive the paths connecting a given initial configuration $\mathbf{R}_i = \begin{bmatrix}
    \mathbf{X}_i & \mathbf{T}_i & \mathbf{N}_i
\end{bmatrix}$ and final configuration $\mathbf{R}_f = \begin{bmatrix}
    \mathbf{X}_f & \mathbf{T}_f & \mathbf{N}_f
\end{bmatrix}$ on a sphere centered at the origin with radius $R.$ To plan the motion on the considered sphere, the sphere and the locations can be scaled to lie on a unit sphere. Hence, $\bar{\mathbf{R}}_i = \begin{bmatrix}
    \frac{\mathbf{X}_i}{R} & \mathbf{T}_i & \mathbf{N}_i
\end{bmatrix}$ and $\bar{\mathbf{R}}_f = \begin{bmatrix}
    \frac{\mathbf{X}_f}{R} & \mathbf{T}_f & \mathbf{N}_f
\end{bmatrix}.$ The scaled matrices $\bar{\mathbf{R}}_i$ and $\bar{\mathbf{R}}_f$ are rotation matrices. Furthermore, the minimum turning radius $r$ on the unit sphere will be the minimum turning radius on the initial sphere divided by $R.$

After scaling the configurations to lie on a unit sphere, the rotation matrix corresponding to each of the $L, R,$ and $G$ segments can be derived using the Sabban frame (summarized in Appendix~A). The arc angle for each segment in a path can then be obtained using these rotation matrices.\footnotemark\,  %
Using the implemented paths, we now provide numerical results regarding optimality of the identified path candidates.
\footnotetext{The detailed derivation for each path, along with the implementation for the path construction, are available at \cite{kumar2025generationpathsmotionplanning} and \protect\url{https://github.com/DeepakPrakashKumar/Motion-planning-on-sphere.git}, respectively.}

We remark here that for all numerical results presented in this section (and in our path construction code), all candidate paths, including both non-degenerate and degenerate variants, were considered.

\subsection{Optimality of $CC_\pi C$ and $CCCC$ paths}

An example instance where an $RL_\pi R$ path is optimal is shown in Fig.~\ref{subfig: RLpiR_path} for $r = 0.71$ (which is marginally larger than $\frac{1}{\sqrt{2}}$). Here, the arc angle of both the $R$ segments is $0.7$ radians. In this instance, the length of the $RL_\pi R$ path is $3.2245$ units. On the other hand, among the list of other paths that connect the same configuration, the shortest path is an $LRL$ path with length $6.6964.$ This in turn indicates the impact of consideration of a $CC_\pi C$ path for $r > \frac{1}{\sqrt{2}}.$ This example clarifies the role of candidate $CC_\pi C$ paths for $\frac{1}{\sqrt{2}} < r \leq \frac{\sqrt{3}}{2}$, a corollary of Lemma~\ref{lemma: nonoptimality_CCpiCpiC}.

On the other hand, an example instance in which an $RLRL$ path is optimal, wherein the arc angle of the middle $L$ and $R$ segments are equal and greater than $\pi$ radians, is shown in Fig.~\ref{subfig: RLRL_path}. In this instance, $r = 0.55,$ with the arc angle of the middle segments equaling $3.5458$ radians, and the arc angle of the first and last segments equaling $0.35$ radians. The length of the $RLRL$ path was $4.2853$ units, whereas the shortest path among the other paths connecting the same configurations was an $LRL$ path with $4.3643$ units.
This example supports the findings in Lemma~\ref{lemma: non-optimality_CCCCC}, which indicates that a $CCCC$ path can be optimal for $r > \frac{1}{2}$ (as a corollary).

\begin{figure}[htb!]
    \centering
    \subfloat[$RL_\pi R$ path for $r = 0.71$]{\includegraphics[width = 0.6\linewidth]{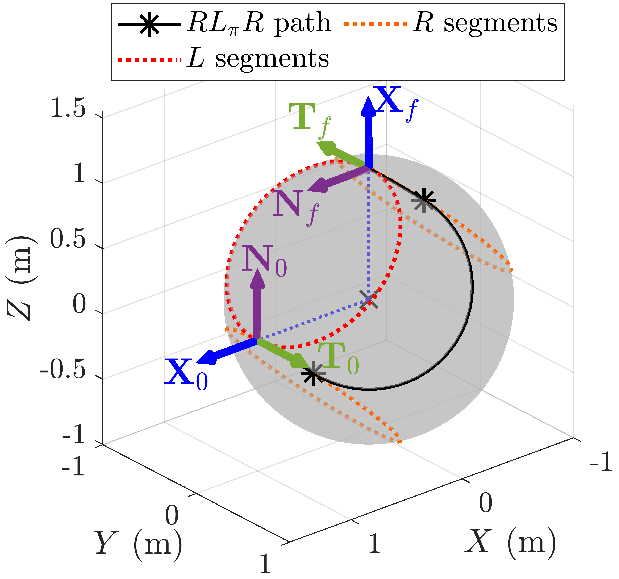}\label{subfig: RLpiR_path}}
    \hfil
    \subfloat[$RLRL$ path for $r = 0.55$]{\includegraphics[width = 0.6\linewidth]{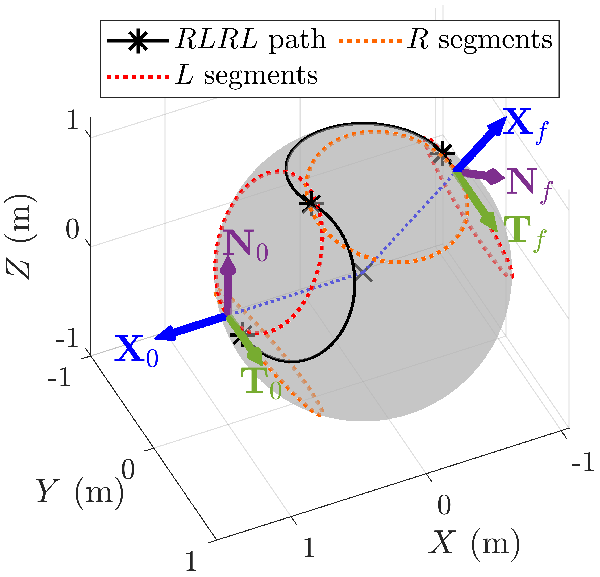}\label{subfig: RLRL_path}}
    \caption{Instances in which $CC_\pi C$ paths and $CCCC$ paths are optimal. Animations of these paths are available on our GitHub page.}
    \label{fig: numerical_results}
\end{figure}

\subsection{Optimality of $CGC$ and $CCC$ paths}

Example instances wherein $CGC$ and $CCC$ paths are optimal are shown in Fig.~\ref{fig: numerical_results_CGC_CCC}. Here, we consider $r = 0.4$ for illustration. For the $LGL, LGR, RGL, RGR$ paths, we pick $\phi_1 = 1.2$ radians, $\phi_2 = 0.6$ radians, and $\phi_3 = 1.4$ radians. For the $LRL$ and $RLR$ paths, we pick $\phi_1 = 1.5$ radians, $\phi_2 = \frac{3\pi}{2}$ radians, and $\phi_3 = 1.4$ radians. For each corresponding final configuration, the chosen path is indeed the shortest path among other candidate optimal paths.
\begin{figure*}[htb!]
    \centering
    \subfloat[$LGL$ path]{\includegraphics[width = 0.3\linewidth]{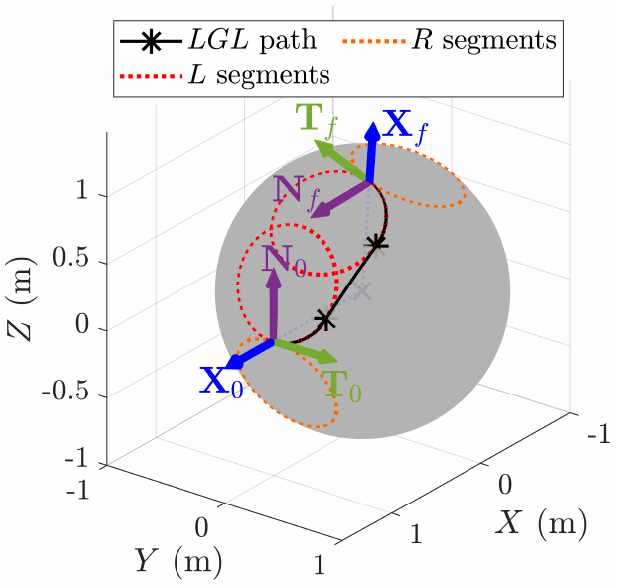}\label{subfig: LGL_path}}
    \hfil
    \subfloat[$LGR$ path]{\includegraphics[width = 0.32\linewidth]{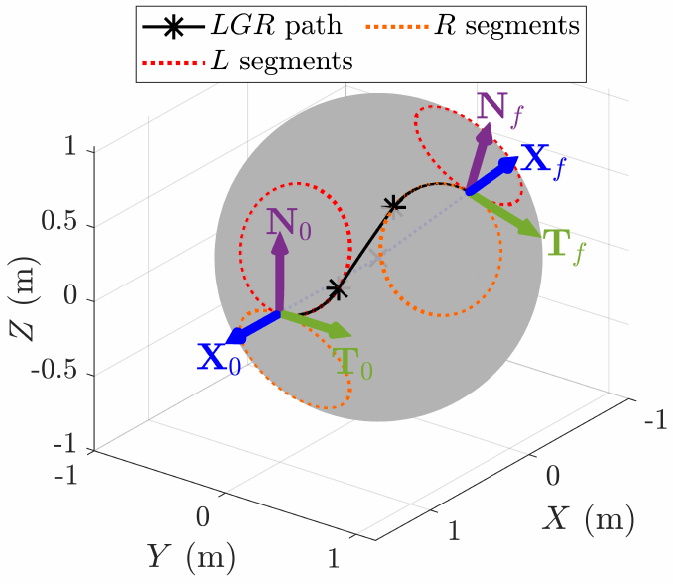}\label{subfig: LGR_path}}
    \hfil
    \subfloat[$LRL$ path]{\includegraphics[width = 0.34\linewidth]{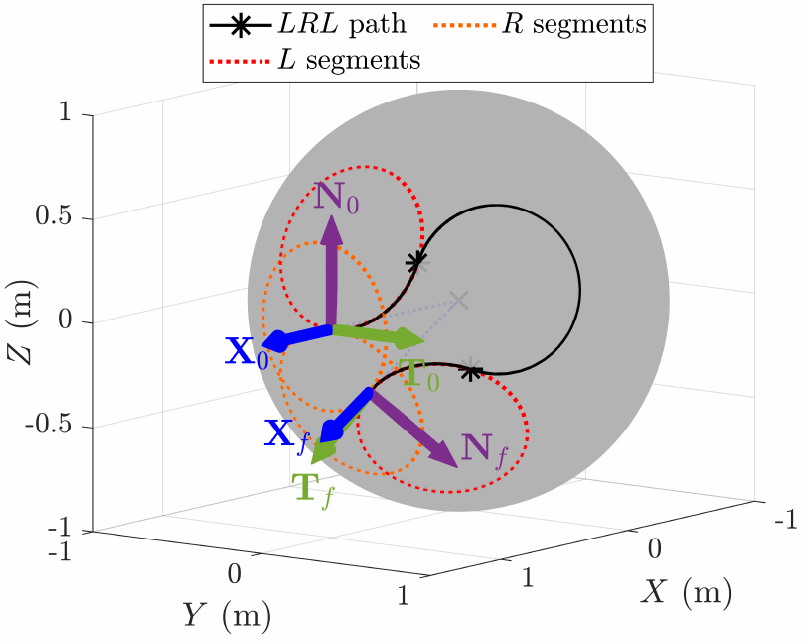}\label{subfig: LRL_path}}
    \caption{Instances in which $CGC$ and $CCC$ paths paths are optimal for $r = 0.4$. Instances for optimality of $RGR$, $RGL$, and $RLR$ paths can be obtained by reflecting the final configurations of the $LGL$, $LGR$, and $LRL$ cases, respectively, about the equator. Animations illustrating all six paths are available on our GitHub page.}
    \label{fig: numerical_results_CGC_CCC}
\end{figure*}

\subsection{Analysis of optimality of $CCCCC$ path}

Extensive numerical simulations were conducted to investigate the optimality of $CCCCC$ paths for $r \in (\frac{1}{\sqrt{2}}, \frac{\sqrt{3}}{2}]$. In our simulations, we discretized $r$ into 100 values uniformly sampled in $[\frac{1}{\sqrt{2}}+0.0001, \frac{\sqrt{3}}{2}]$, $\phi_1$ and $\phi_3$ into 20 values each uniformly sampled over $[0.0001, 0.2]$ radians, and $\phi_2$ into 100 values over $[\pi+0.0001, 2\pi-0.0001]$. This resulted in over 4 million parameter combinations. We remark here that we chose a smaller interval and finer discretization for $\phi_1$ and $\phi_3$ because, although a general $CCCCC$ path would allow $\phi_1, \phi_3 \in [0,2\pi]$, such subpaths are sufficient to capture the relevant cases for our analysis. If the original $CCCCC$ path is optimal, then the corresponding $CCCCC$ subpath with $\phi_1, \phi_3 \leq 0.2$ must also be optimal. No instance was found where a $CCCCC$ path was optimal, suggesting that such paths may not be a candidate optimal path for $r \in \left(\frac{1}{\sqrt{2}}, \frac{\sqrt{3}}{2} \right]$. Nevertheless, as a general proof of non-optimality for $CCCCC$ paths is still unknown, their inclusion in the sufficient candidate set remains valid.

\subsection{Varying final configuration and $r$ value}

Finally, we analyzed how often each path type is optimal over the space of final configurations. The final configuration on the sphere was parameterized by latitude $L \in [0,2\pi)$, longitude $l \in [-\pi/2,\pi/2]$, and heading angle $\psi \in [0,2\pi)$. Using the spherical-coordinate model in \cite{kumar2024equivalencedubinspathsphere}, $\mathbf{X}_f$, $\mathbf{T}_f$, and $\mathbf{N}_f$ are uniquely determined by $(l,L,\psi)$.\footnotemark\footnotetext{While the expression for $\mathbf{X}_f$ can easily be obtained as $\mathbf{X}_f = (\cos{l}\,\cos{L},\; \sin{l}\,\cos{L},\; \sin{L})^T$, the expression for $\mathbf{T}_f$ is given by
\[
    \mathbf{T}_f = \mathbf{R}_z(l)\, \mathbf{R}_y(-L - \pi/2)\, \mathbf{R}_z(\psi)\, (1,\; 0,\; 0)^T.
\]
Here, $\mathbf{R}_z$ and $\mathbf{R}_y$ denote standard elementary rotation matrices. The first two rotations align the coordinate axes (initially the columns of the identity matrix) with the $e_n$, $e_{lat}$, and $e_r$ axes as described in~\cite{kumar2024equivalencedubinspathsphere}. The final rotation by $\psi$ sets the desired orientation of the $x$ and $y$ axes to match the navigation frame. $\mathbf{N}_f = \mathbf{X}_f \times \mathbf{T}_f$ can finally be obtained.} Each of these three angles was discretized into $50$ values. The radius $r$ was discretized over three intervals, $[0.01,0.5]$, $[0.51,0.7]$, and $[0.71,0.86]$, with a step size of $0.01$ in each interval. The percentage of final configurations for which each path type is optimal is shown in Fig.~\ref{fig: percentage_configuration}. Degenerate two-segment and one-segment paths are grouped with the $CGC$ and $CCC$ classes.

From Fig.~\ref{fig: percentage_configuration}, we observe that for $r \leq 1/\sqrt{2}$, $CGC$ is predominantly optimal, whereas for larger $r$, the $LGR$, $RGL$, and $CCC$ paths dominate, highlighting the influence of $r$ on vehicle maneuverability. For $r > 1/2$, we also find configurations for which $CCCC$ paths are optimal. A few instances where the $CC_\pi C$ path is optimal arise for $r > 1/\sqrt{2}$. Finally, the $CCCCC$ path does not appear; this reinforces the hypothesis from the previous subsection that it is non-optimal.

\begin{figure}[htb!]
    \centering
    \includegraphics[width=\linewidth]{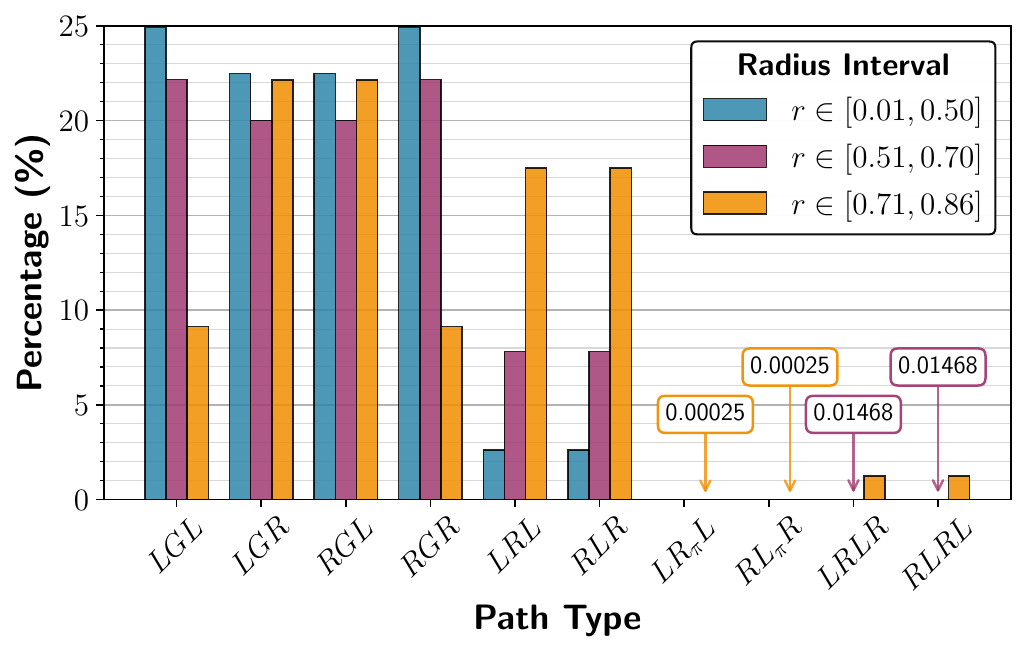}
    \caption{Percentage of instances in which each path type is optimal. Color coding indicate the corresponding radius intervals; for example, the value $0.00025$ corresponds to the $LR_\pi L$ path for $r \in [0.71,0.86]$.}
    \label{fig: percentage_configuration}
\end{figure}

Our analysis also revealed that, for the same final configuration, the optimal path type can change as the turning radius $r$ varies. For example, Fig.~\ref{fig: transition} shows a final configuration ($l = -90^\circ$, $L = 7.20^\circ,$ and $\psi = 230.40^\circ$) where the $RLR$ path is optimal for $r = 0.79$, but the $RLRL$ path becomes optimal for $r = 0.8$. This illustrates how the turning radius fundamentally influences the structure of the optimal path.

\begin{figure}[htb!]
    \centering
    \includegraphics[width=0.7\linewidth]{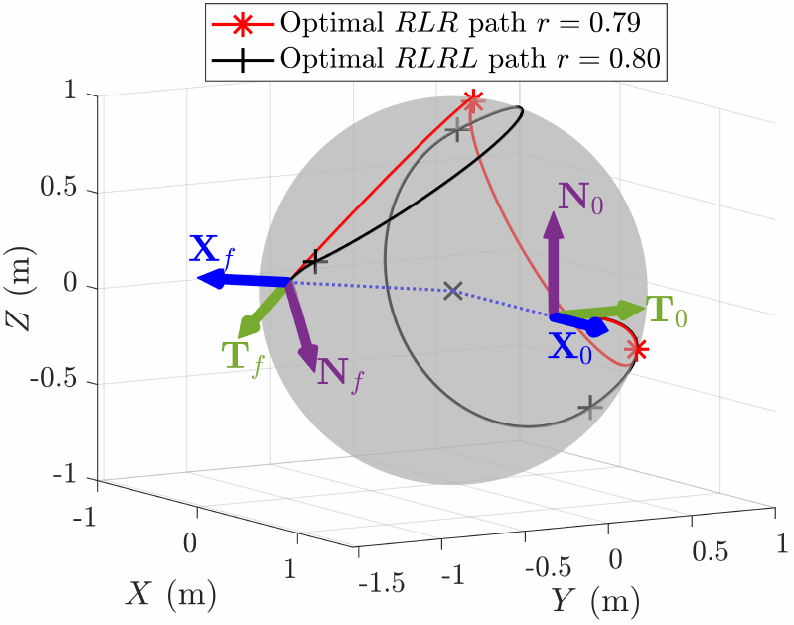}
    \caption{Transition from $CCC$ to $CCCC$ path with changing $r$. Animations of these paths are available on our GitHub page.}
    \label{fig: transition}
\end{figure}

\section{Conclusion}
\label{sec:conclusion}

In this article, a new model for motion planning in 3D was proposed, wherein the complete configuration description was considered. The motion constraints considered in this regard correspond to pitch rate and yaw rate constraints for the vehicle. As a step towards addressing this difficult problem, motion planning for a Dubins vehicle on a sphere was considered, which arises as an intermediary problem to be solved. The motion planning problem on a sphere is also a problem of independent interest, arising for planning the motion of high-speed aircraft moving at a constant altitude over the Earth. A model based on the Sabban frame was described to study the motion planning problem on a unit sphere, which has been established in \cite{3D_Dubins_sphere, monroy}. However, the results in \cite{3D_Dubins_sphere} were restricted to normal controls, i.e., when the coefficient of the adjoint variable corresponding to the integrand in the Hamiltonian is non-zero, and considered $r \leq \frac{1}{2}$. In \cite{monroy}, the results for the problem were derived for $r = \frac{1}{\sqrt{2}}$. Hence, existing literature has only considered $r \leq \frac{1}{2}$ and $r = \frac{1}{\sqrt{2}}$ for the sphere problem. In this article, the candidate optimal path results for abnormal controls and normal controls for $r \leq \frac{\sqrt{3}}{2}$ were derived. In particular, the optimal path is shown to be of type $CGC, CCCC,$ or a degenerate path of the same for $r \leq \frac{1}{\sqrt{2}}$ and $CGC, CC_\pi C, CCCCC,$ or a degenerate path of the same for $r \leq \frac{\sqrt{3}}{2}$. These results signify the importance of the curvature bound (or minimum turning radius), which is a parameter of the vehicle, on the optimal path, a result that has not been seen to the best of our knowledge in motion planning problems. Such a result indicates that the solution to the general 3D motion planning problem will be dependent on the yaw rate and pitch rate bounds, which deviates from the current literature for the 3D problem. 

\begin{remark}
The sufficient list of candidate optimal paths presented in this work are valid only for $r \leq \frac{\sqrt{3}}{2}$. For $r > \frac{\sqrt{3}}{2}$, identifying the finite list of candidate optimal paths remains an open problem. However, we hypothesize that as $r \to 1$, as vehicle maneuverability becomes increasingly restricted, the number of necessary path concatenations will tend to infinity. This phenomenon could arise as $r \rightarrow 1,$ where the $L$ and $R$ segments converge to the $G$ segment. Consequently, an increasing number of concatenations is required to reach the desired final configuration.
\end{remark}

\begin{remark}
    In practice, for $r > \frac{\sqrt{3}}{2}$, the candidate paths derived in this paper may be used as heuristic solutions to obtain feasible paths; however, there will be instances where a feasible solution cannot be obtained from this list. In such cases, practitioners can derive analytical expressions or numerically solve for paths with concatenated $C$ segments: $C_\alpha C_\pi C_\pi \cdots C_\pi C_\gamma$ for abnormal controls (with two unknown angles), and $C_\alpha C_{\pi + \beta} C_{\pi + \beta} \cdots C_{\pi + \beta} C_{\gamma}$ for normal controls (with three unknown angles). For analytically determining the expressions for these angles for paths with more $C$ segment concatenations than considered in this paper, practitioners may extend our path construction methodology as detailed in~\cite{kumar2025generationpathsmotionplanning}.
\end{remark}

\bibliographystyle{IEEEtran}
\bibliography{cite}

\appendix

\subsection{Expressions for rotation matrices} \label{appsubsect: rotation_matrices}

The expression for the rotation matrices $\mathbf{R}_L, \mathbf{R}_R$ and $\mathbf{R}_G,$ which were derived in \cite{free_terminal_sphere}, is given below for completeness:
\begin{align*} 
    \mathbf{R}_G (\phi) &= \begin{pmatrix}
        c_\phi & - s_\phi & 0 \\
        s_\phi & c_\phi & 0 \\
        0 & 0 & 1
    \end{pmatrix}, \\
    \mathbf{R}_L (r, \phi) &= \begin{pmatrix}
        \rho_{11} & - r s_\phi & \rho_{13} \\
        r s_\phi & c_\phi & - \rho_{23} \\
        \rho_{13} & \rho_{23} & \rho_{33}
    \end{pmatrix}, \\
    \mathbf{R}_R (r, \phi) &= \begin{pmatrix}
        \rho_{11} & - r s_\phi & -\rho_{13} \\
        r s_\phi & c_\phi & \rho_{23} \\
        -\rho_{13} & -\rho_{23} & \rho_{33}
    \end{pmatrix}.
\end{align*}
In the above equation, $c_\phi := \cos{\phi},$ $s_\phi := \sin{\phi},$ and
\begin{align*}
    \rho_{11} &= 1 - (1 - c_\phi) r^2, \quad \rho_{13} = (1 - c_\phi) r \sqrt{1 - r^2}, \\
    \rho_{23} &= s_\phi \sqrt{1 - r^2}, \quad \rho_{33} = c_\phi + (1 - c_\phi) r^2.
\end{align*}

\subsection{Intermediary calculations for deriving \eqref{eq: first_order_equation_GRG_path}} \label{appsubsect: calculations}

Differentiating \eqref{eq: LRL_path_matrix_equation} with respect to $\delta,$ and using product rule and chain rule for differentiation, the equation obtained is given by
\begin{align}
\begin{split}
    &\frac{d \mathbf{R}_L (r, \delta)}{d\delta} \mathbf{R}_R (r, \pi) \mathbf{R}_L (r, \delta) + \mathbf{R}_L (r, \delta) \mathbf{R}_R (r, \pi) \frac{d \mathbf{R}_L (r, \delta)}{d\delta} \\
    &= \frac{d\mathbf{R}_G (\phi_1 (\delta))}{d\phi_1} \frac{d\phi_1(\delta)}{d \delta} \mathbf{R}_R (r, \pi + \phi_2 (\delta)) \mathbf{R}_G (\phi_1 (\delta)) \\
    & \,\,\,\, + \mathbf{R}_G (\phi_1 (\delta)) \frac{d\mathbf{R}_R (r, \pi + \phi_2 (\delta))}{d\left(\pi + \phi_2 \right)} \frac{d \left(\pi + \phi_2 (\delta) \right)}{d\delta} \mathbf{R}_G (\phi_1 (\delta)) \\
    & \,\,\,\, + \mathbf{R}_G (\phi_1 (\delta)) \mathbf{R}_R (r, \pi + \phi_2 (\delta)) \frac{d\mathbf{R}_G (\phi_1 (\delta))}{d\phi_1} \frac{d\phi_1 (\delta)}{d\delta}.
\end{split}
\end{align}

Noting that $\frac{d \mathbf{R}_S (\phi)}{d \phi} = \mathbf{R}_S (\phi) \hat{\Omega}_S,$ where $S \in \{L, R, G\}$ and $\phi$ is the arc angle of the segment (which can be $\phi_1,$ $\phi_2,$ or $\delta$ in the above equation), the equation obtained is given by
\begin{align} \label{eq: LRL_path_matrix_equation_derivative}
\begin{split}
    &\mathbf{R}_L (r, \delta) \hat{\Omega}_L \mathbf{R}_R (r, \pi) \mathbf{R}_L (r, \delta) + \mathbf{R}_L (r, \delta) \mathbf{R}_R (r, \pi) \mathbf{R}_L (r, \delta) \hat{\Omega}_L \\
    &= \frac{d \phi_1 (\delta)}{d \delta} \bigg(\mathbf{R}_G (\phi_1 (\delta)) \hat{\Omega}_G \mathbf{R}_R (r, \pi + \phi_2 (\delta)) \mathbf{R}_G (\phi_1 (\delta))\\
    &\hspace{1.8cm}+ \mathbf{R}_G (\phi_1 (\delta)) \mathbf{R}_R (r, \pi + \phi_2 (\delta)) \mathbf{R}_G (\phi_1 (\delta)) \hat{\Omega}_G \bigg) \\
    & \quad\, + \frac{d \phi_2 (\delta)}{d \delta} \mathbf{R}_G (\phi_1 (\delta)) \mathbf{R}_R (r, \pi + \phi_2 (\delta)) \hat{\Omega}_R \mathbf{R}_G (\phi_1 (\delta)).
\end{split}
\end{align}
Note that $\frac{d\phi_1 (\delta)}{d\delta}$ and $\frac{d\phi_2 (\delta)}{d\delta}$ can be pulled out since it is a scalar term.

Evaluating the above equation for $\delta = 0,$ the equation can be simplified in terms of the Taylor coefficients $a_1$ and $a_2$ as obtained in \eqref{eq: first_order_equation_GRG_path}. This is because when $\delta = 0,$ $\phi_1 (\delta = 0) = \phi_2 (\delta = 0) = 0$; moreover, $\mathbf{R}_L (0),$ $\mathbf{R}_R (0),$ and $\mathbf{R}_G (0)$ reduce to the identity matrix, since the vehicle is not changing its configuration when it goes through such a rotation. Hence, we obtain the above equation.

\subsection{Computations for non-optimality of $RLRLR$ path for $r \leq \frac{1}{\sqrt{2}}$ in Lemma~\ref{lemma: non-optimality_CCCCC}} \label{appsubsect: CCCCC_non_optimality_proof}

The equation to be satisfied by the alternately constructed $LRL$ path to connect the same initial and final configurations as the considered $L_\pi R_{\pi + \beta} L_\pi$ subpath of the $RLRLR$ path is given in \eqref{eq: matrix_equation_CCCCC_nonoptimality}. The expression for the net rotation matrix corresponding to the $L_\pi R_{\pi + \beta} L_\pi$ is given by
\begin{align} \label{eq: CCCCC_nonoptimality_LRL_net_matrix}
\begin{split}
    \mathbf{R}_L (\pi) \mathbf{R}_R (\pi + \beta) \mathbf{R}_L (\pi) &= 
    \begin{pmatrix}
        \beta_{11} & \beta_{12} & \beta_{13} \\
        -\beta_{12} & \beta_{22} & \beta_{23} \\
        \beta_{13} & -\beta_{23} & \beta_{33}
    \end{pmatrix},
\end{split}
\end{align}
where
\begin{align} \label{appeq: beta_expressions}
\begin{split}
    \beta_{11} &= \left(1 - 4 r^2 \right)^2 \left(1 - r^2 \right) - r^2 \left(3 - 4 r^2 \right)^2 \cos{\beta}, \\
    \beta_{12} &= r \left(4 r^2 - 3 \right) \sin{\beta}, \quad\, \beta_{22} = -\cos{\beta}, \\
    \beta_{13} &= r \sqrt{1 - r^2} \left(4 r^2 - 1 \right) \left(4 r^2 - 3 \right) \left(1 + \cos{\beta} \right), \\
    \beta_{23} &= \sqrt{1 - r^2} \left(4 r^2 - 1 \right) \sin{\beta}, \\
    \beta_{33} &= r^2 \left(3 - 4 r^2 \right)^2 - \left(4 r^2 - 1 \right)^2 \left(1 - r^2 \right) \cos{\beta}.
\end{split}
\end{align}
The expression for the net rotation matrix corresponding to the $L_\phi R_{\pi - \beta} L_\phi$ path is given by 
\begin{align} \label{eq: CCCCC_nonoptimality_LRL_net_matrix_alt}
    &\mathbf{R}_L (\phi) \mathbf{R}_R (\pi - \beta) \mathbf{R}_L (\phi) = \begin{pmatrix}
        \alpha_{11} & \alpha_{12} & \alpha_{13} \\
        -\alpha_{12} & \alpha_{22} & \alpha_{23} \\
        \alpha_{13} & -\alpha_{23} & \alpha_{33}
    \end{pmatrix},
\end{align}
where
{\allowdisplaybreaks
\begin{align}
\begin{split} \label{appeq: alpha11}
    \alpha_{11} &= \left(2 r^2 - 1 \right)^2 \left(1 - r^2 \right) - 4 r^2 \left(1 - r^2 \right)^2 c_{\beta} \\
    & \quad\, + 4 r^2 \left(1 - r^2 \right) \left(1 - 2 r^2 \right) \left(1 + c_{\beta} \right) c_{\phi} \\
    & \quad\, + 4 r^2 \left(r^2 - 1 \right) s_{\beta} s_{\phi} + r^2 c_{\beta} s^2_{\phi} \\
    & \quad\, + \left(4 r^4 \left(1 - r^2 \right) - r^2 \left(2 r^2 - 1 \right)^2 c_{\beta} \right) c^2_{\phi} \\
    & \quad\, + 2 r^2 \left(1 - 2 r^2 \right) s_{\beta} s_{\phi} c_{\phi},
\end{split} \\
\begin{split} \label{appeq: alpha12}
    \alpha_{12} &= 2 r \left(1 - r^2 \right) \left(2 r^2 - 1 \right) \left(1 + c_{\beta} \right) s_{\phi} + 2 r \left(r^2 - 1 \right) s_{\beta} c_{\phi} \\
    & \quad\, + r \left(2 r^2 - 1 \right) s_{\beta} s^2_{\phi} - r s_{\beta} \left(2 r^2 - 1 \right) c^2_{\phi} \\
    & \quad\, + 2 \left(-2 r^3 \left(1 - r^2 \right) + r c_{\beta} \left(2 r^4 - 2 r^2 + 1 \right) \right) s_{\phi} c_{\phi},
\end{split} \\
\begin{split} \label{appeq: alpha13}
    \alpha_{13} &= \left(2 r^2 - 1 \right)^2 r \sqrt{1 - r^2} - 4 r^3 \left(1 - r^2 \right)^{\frac{3}{2}} c_{\beta} \\
    & \quad\, - 2 r \sqrt{1 - r^2} \left(2 r^2 - 1 \right)^2 \left(1 + c_{\beta} \right) c_{\phi} \\
    & \quad\, - 2 r \sqrt{1 - r^2} \left(2 r^2 - 1 \right) s_{\beta} s_{\phi} \\
    & \quad\, + 2 r \sqrt{1 - r^2} \left(2 r^2 - 1 \right) s_{\beta} s_{\phi} c_{\phi} - r \sqrt{1 - r^2} c_{\beta} s^2_{\phi} \\
    & \quad\, + \left(\left(2 r^2 - 1 \right)^2 r \sqrt{1 - r^2} c_{\beta} - 4 r^3 \left(1 - r^2 \right)^{\frac{3}{2}} \right) c^2_{\phi},
\end{split} \\
\begin{split} \label{appeq: alpha22}
    \alpha_{22} &= 4 r^2 s^2_{\phi} (r^2 - 1) + c_{\beta} \left(- c^2_{\phi} + (1 - 2 r^2)^2 s^2_{\phi} \right) \\
    & \quad\, + 2 s_{\beta} s_{\phi} c_{\phi} (1 - 2 r^2),
\end{split} \\
\begin{split} \label{appeq: alpha23}
    \alpha_{23} &= 2 r^2 \sqrt{1 - r^2} \left(\left(1 - 2 r^2 \right) \left(1 + c_{\beta} \right) s_{\phi} + s_{\beta} c_{\phi} \right) \\
    & \quad- \left(1 - 2 r^2 \right) \sqrt{1 - r^2} s_{\beta} \left(s^2_{\phi} - c^2_{\phi} \right) \\
    & \quad\, + \sqrt{1 - r^2} \Big(-4 r^2 \left(1 - r^2 \right) \\
    & \hspace{2.2cm} + \left(1 + \left(1 - 2 r^2 \right)^2 \right) c_{\beta} \Big) s_{\phi} c_{\phi},
\end{split} \\
\begin{split} \label{appeq: alpha33}
    \alpha_{33} &= r^2 \left(2 r^2 - 1 \right)^2 + 4 r^4 \left(r^2 - 1 \right) c_{\beta} \\
    & \quad\, + 4 r^2 \left(1 - r^2 \right) \left(2 r^2 - 1 \right) \left(1 + c_{\beta} \right) c_{\phi} \\
    & \quad\, + 4 r^2 \left(1 - r^2 \right) s_{\beta} s_{\phi} \\
    & \quad\, + \left(1 - r^2 \right) \left(4 r^2 \left(1 - r^2 \right) - \left(2 r^2 - 1 \right)^2 c_{\beta} \right) c^2_{\phi} \\
    & \quad\, + \left(1 - r^2 \right) c_{\beta} s^2_{\phi} + 2 s_{\beta} \left(1 - r^2 \right) \left(1 - 2 r^2 \right) s_{\phi} c_{\phi}.
\end{split}
\end{align}}
Now, consider the solution identified for $\phi$, which is given in ~\eqref{eq: solution_phi_nonoptimality_CCCCC_path}. For this solution, noting that $s_{\phi} = s_{\pi - \theta} = s_{\theta},$ $c_{\phi} = c_{\pi - \theta} = - c_{\theta},$ it is desired to shown that the two net rotation matrices given in \eqref{eq: CCCCC_nonoptimality_LRL_net_matrix} and \eqref{eq: CCCCC_nonoptimality_LRL_net_matrix_alt} are equal. Noting that the relation $\alpha_{22} = \beta_{22}$ was used to derive the expression for $\phi$, the following five relations must be shown using the considered solution for $\phi$ to prove that the two net rotation matrices given in \eqref{eq: CCCCC_nonoptimality_LRL_net_matrix} and \eqref{eq: CCCCC_nonoptimality_LRL_net_matrix_alt} are equal:
\begin{enumerate}
    \item $\alpha_{11} = \beta_{11}$
    \item $\alpha_{12} = \beta_{12}$
    \item $\alpha_{13} = \beta_{13}$
    \item $\alpha_{23} = \beta_{23}$
    \item $\alpha_{33} = \beta_{33}$
\end{enumerate}
To obtain the expression for $s_{\phi}$ and $c_{\phi}$, the definition of $s_{\theta}$ and $c_{\theta}$ in \eqref{eq: solving_phi_CCCCC_nonoptimality} in terms of $A$ and $B$, which are defined in the same equation, is desired to be used. First, the expression for $A^2 + B^2$ can be obtained as
\begin{align*}
    A^2 + B^2 &= 4 \left(1 - 2 r^2 \left(1 - r^2 \right) \left(1 + c_{\beta} \right) \right)^2.
\end{align*}
Since $\beta \in (0, \pi),$ $\left(1 - 2 r^2 \left(1 - r^2 \right) \left(1 + c_{\beta} \right) \right) > 1 - 4 r^2 (1 - r^2) = (2 r^2 - 1)^2 \geq 0.$ Hence, the expression for $\sqrt{A^2 + B^2}$ is given by $2 \left(1 - 2 r^2 \left(1 - r^2 \right) \left(1 + c_{\beta} \right) \right).$ Therefore, using the definition of $s_{\theta}$ and $c_{\theta}$ in \eqref{eq: solving_phi_CCCCC_nonoptimality},
\begin{align}
    s_{\phi} &= s_{\theta} = \frac{s_{\beta} \left(1 - 2 r^2 \right)}{d}, \label{eq: expression_sphi_alt_path_CCCCC} \\
    c_{\phi} &= - c_{\theta} = -\frac{4 r^2 (r^2 - 1) + c_{\beta} \left(1 + (1 - 2 r^2)^2 \right)}{2 d}. \label{eq: expression_cphi_alt_path_CCCCC}
\end{align}
Here, $d := 1 - 2 r^2 \left(1 - r^2 \right) \left(1 + c_{\beta} \right).$

\textbf{Claim 1:} $\alpha_{11} = \beta_{11}$ for $s_{\phi}$ and $c_{\phi}$ as given in \eqref{eq: expression_sphi_alt_path_CCCCC} and \eqref{eq: expression_cphi_alt_path_CCCCC}, respectively.

\begin{proof}
    Substituting the considered expressions for $s_{\phi}$ and $c_{\phi}$ in the expression for $\alpha_{11}$ in \eqref{appeq: alpha11} and simplifying,
    \begin{align} \label{eq: expression_alpha_11_CCCCC_path_soln}
    \begin{split}
        \alpha_{11} &= \left(2 r^2 - 1 \right)^2 \left(1 - r^2 \right) - 4 r^2 \left(1 - r^2 \right)^2 c_{\beta} \\
        & \quad\, + \frac{t_{2, \alpha_{11}}}{d} + \frac{t_{3, \alpha_{11}}}{d^2}
    \end{split}
    \end{align}
    where $t_{2, \alpha_{11}}$ is contributed to by $s_{\phi}$ and $c_{\phi}$ terms in \eqref{appeq: alpha11}, and $t_{3, \alpha_{11}}$ is contributed to by $s^2_{\phi},$ $c^2_{\phi},$ and $s_{\phi} c_{\phi}$ terms in \eqref{appeq: alpha11}. These terms can be obtained as
    \begin{align*}
        t_{2, \alpha_{11}} &= 4 r^2 \left(r^2 - 1 \right) \left(1 - 2 r^2 \right) \left(1 + c_{\beta} \right) d, \\
        t_{3, \alpha_{11}} &= \left(4 r^4 \left(1 - r^2 \right) - r^2 \left(2 r^2 - 1 \right)^2 c_{\beta} \right) d^2.
    \end{align*}
    Substituting the obtained expressions for $t_{2, \alpha_{11}}$ and $t_{3, \alpha_{11}}$ in \eqref{eq: expression_alpha_11_CCCCC_path_soln} and simplifying, it follows that
    \begin{align*}
        \alpha_{11} &= \left(1 - 4 r^2 \right)^2 \left(1 - r^2 \right) - r^2 \left(3 - 4 r^2 \right)^2 \cos{\beta} = \beta_{11}.
    \end{align*}
\end{proof}

\textbf{Claim 2:} $\alpha_{12} = \beta_{12}$ for $s_{\phi}$ and $c_{\phi}$ as given in \eqref{eq: expression_sphi_alt_path_CCCCC} and \eqref{eq: expression_cphi_alt_path_CCCCC}, respectively.

\begin{proof}
    Substituting the considered expression for $s_{\phi}$ and $c_{\phi}$ in the expression for $\alpha_{12}$ in \eqref{appeq: alpha12} and simplifying,
    \begin{align} \label{eq: expression_alpha_12_CCCCC_path_soln}
        \alpha_{12} &= \frac{t_{1, \alpha_{12}}}{d} + \frac{t_{2, \alpha_{12}}}{d^2},
    \end{align}
    where $t_{1, \alpha_{12}}$ is contributed to by $s_{\phi}$ and $c_{\phi}$ terms in \eqref{appeq: alpha12}, and $t_{2, \alpha_{12}}$ is contributed to by $s^2_{\phi},$ $c^2_{\phi},$ and $s_{\phi} c_{\phi}$ terms in \eqref{appeq: alpha12}. These terms can be obtained as
    \begin{align*}
        t_{1, \alpha_{12}} &= 2 r \left(r^2 - 1 \right) s_{\beta} d, \quad\,
        t_{2, \alpha_{12}} = r \left(2 r^2 - 1 \right) s_{\beta} d^2.
    \end{align*}
    Substituting the obtained expressions for $t_{1, \alpha_{12}}$ and $t_{2, \alpha_{12}}$ in \eqref{eq: expression_alpha_12_CCCCC_path_soln} and simplifying,
    \begin{align*}
        \alpha_{12} &= \left(4 r^3 - 3 r \right) s_{\beta} = \beta_{12}.
    \end{align*}
\end{proof}

\textbf{Claim 3:} $\alpha_{13} = \beta_{13}$ for $s_{\phi}$ and $c_{\phi}$ as given in \eqref{eq: expression_sphi_alt_path_CCCCC} and \eqref{eq: expression_cphi_alt_path_CCCCC}, respectively.

\begin{proof}
    Substituting the considered expression for $s_{\phi}$ and $c_{\phi}$ in the expression for $\alpha_{13}$ in \eqref{appeq: alpha13} and simplifying,
    \begin{align} \label{eq: expression_alpha_13_CCCCC_path_soln}
    \begin{split}
        \alpha_{13} &= \left(2 r^2 - 1 \right)^2 r \sqrt{1 - r^2} - 4 r^3 \left(1 - r^2 \right)^{\frac{3}{2}} c_{\beta} \\
        & \quad\, + \frac{t_{2, \alpha_{13}}}{d} + \frac{t_{3, \alpha_{13}}}{d^2},
    \end{split}
    \end{align}
    where $t_{2, \alpha_{13}}$ is contributed to by $s_{\phi}$ and $c_{\phi}$ terms in \eqref{appeq: alpha13}, and $t_{3, \alpha_{13}}$ is contributed to by $s^2_{\phi},$ $c^2_{\phi},$ and $s_{\phi} c_{\phi}$ terms in \eqref{appeq: alpha13}. These terms can be obtained as
    \begin{align*}
    t_{2, \alpha_{13}} &= 2 r \sqrt{1 - r^2} \left(2 r^2 - 1 \right)^2 \left(1 + c_{\beta} \right) d, \\
    t_{3, \alpha_{13}} &= r \sqrt{1 - r^2} \left(\left(2 r^2 - 1 \right)^2 c_{\beta} - 4 r^2 \left(1 - r^2 \right) \right) d^2.
    \end{align*}
    Substituting the obtained expressions for $t_{2, \alpha_{13}}$ and $t_{3, \alpha_{13}}$ in \eqref{eq: expression_alpha_13_CCCCC_path_soln} and simplifying, it follows that
    \begin{align*}
    \alpha_{13} &= r \sqrt{1 - r^2} \left(1 + c_{\beta} \right) \left(4 r^2 - 1 \right) \left(4 r^2 - 3 \right) = \beta_{13}.
    \end{align*}
\end{proof}

\textbf{Claim 4:} $\alpha_{23} = \beta_{23}$ for $s_{\phi}$ and $c_{\phi}$ as given in \eqref{eq: expression_sphi_alt_path_CCCCC} and \eqref{eq: expression_cphi_alt_path_CCCCC}, respectively.

\begin{proof}
    Substituting the considered expression for $s_{\phi}$ and $c_{\phi}$ in the expression for $\alpha_{23}$ in \eqref{appeq: alpha23} and simplifying,
    \begin{align} \label{eq: expression_alpha_23_CCCCC_path_soln}
        \alpha_{23} &= \frac{t_{1, \alpha_{23}}}{d} + \frac{t_{2, \alpha_{23}}}{d^2},
    \end{align}
    where $t_{1, \alpha_{23}}$ is contributed to by $s_{\phi}$ and $c_{\phi}$ terms in \eqref{appeq: alpha23}, and $t_{2, \alpha_{23}}$ is contributed to by $s^2_{\phi},$ $c^2_{\phi},$ and $s_{\phi} c_{\phi}$ terms in \eqref{appeq: alpha23}. These terms can be obtained as
    \begin{align*}
        t_{1, \alpha_{23}} &= 2 r^2 \sqrt{1 - r^2} s_{\beta} d, \\
        t_{2, \alpha_{23}} &= \left(2 r^2 - 1 \right) \sqrt{1 - r^2} s_{\beta} d^2.
    \end{align*}
    Substituting the obtained expressions for $t_{1, \alpha_{23}}$ and $t_{2, \alpha_{23}}$ in \eqref{eq: expression_alpha_23_CCCCC_path_soln} and simplifying, it follows that
    \begin{align*}
    	\alpha_{23} &= \left(4 r^2 - 1 \right) \sqrt{1 - r^2} s_{\beta} = \beta_{23}.
    \end{align*}
\end{proof}

\textbf{Claim 5:} $\alpha_{33} = \beta_{33}$ for $s_{\phi}$ and $c_{\phi}$ as given in \eqref{eq: expression_sphi_alt_path_CCCCC} and \eqref{eq: expression_cphi_alt_path_CCCCC}, respectively.

\begin{proof}
    Substituting the considered expression for $s_{\phi}$ and $c_{\phi}$ in the expression for $\alpha_{33}$ in \eqref{appeq: alpha33} and simplifying,
    \begin{align} \label{eq: expression_alpha_33_CCCCC_path_soln}
        \alpha_{33} &= r^2 \left(2 r^2 - 1 \right)^2 + 4 r^4 \left(r^2 - 1 \right) c_{\beta} + \frac{t_{2, \alpha_{33}}}{d} + \frac{t_{3, \alpha_{33}}}{d^2},
    \end{align}
    where $t_{2, \alpha_{33}}$ is contributed to by $s_{\phi}$ and $c_{\phi}$ terms in \eqref{appeq: alpha33}, and $t_{3, \alpha_{33}}$ is contributed to by $s^2_{\phi},$ $c^2_{\phi},$ and $s_{\phi} c_{\phi}$ terms in \eqref{appeq: alpha33}. These terms can be obtained as
    \begin{align*}
        t_{2, \alpha_{33}} &= 4 r^2 \left(1 - r^2 \right) \left(1 - 2 r^2 \right) \left(1 + c_{\beta} \right) d, \\
        t_{3, \alpha_{33}} &= \left(1 - r^2 \right) \left(4 r^2 \left(1 - r^2 \right) - \left(2 r^2 - 1 \right)^2 c_{\beta} \right) d^2.
    \end{align*}
    Substituting the obtained expressions for $t_{2, \alpha_{33}}$ and $t_{3, \alpha_{33}}$ in \eqref{eq: expression_alpha_33_CCCCC_path_soln} and simplifying, it follows that
    \begin{align*}
        \alpha_{33} &= r^2 \left(4 r^2 - 3 \right)^2 - \left(1 - r^2 \right) \left(4 r^2 - 1 \right)^2 c_{\beta} = \beta_{33}.
    \end{align*}
\end{proof}

\subsection{Computations for non-optimality of $RLRLRL$ path for $r \leq \frac{\sqrt{3}}{2}$  in Lemma~\ref{lemma: claim2}} \label{appsubsect: CCCCCC_non_optimality_proof}

The equation to be satisfied by the alternately constructed $LRLR$ path to connect the same initial and final configurations as the considered $L_\pi R_{\pi + \beta} L_{\pi + \beta} R_\pi$ subpath of the $RLRLRL$ path is given in \eqref{eq: matrix_equation_CCCCCC_nonoptimality}. The expression for the net rotation matrix corresponding to the $L_\pi R_{\pi + \beta} L_{\pi + \beta} R_\pi$ is given by
\begin{align} \label{eq: CCCCCC_nonoptimality_LRLR_net_matrix}
\begin{split}
    &\mathbf{R}_L (\pi) \mathbf{R}_R (\pi + \beta) \mathbf{R}_L (\pi + \beta) \mathbf{R}_R (\pi) \\
    &= \begin{pmatrix}
        \bar{\beta}_{11} & \bar{\beta}_{12} & \bar{\beta}_{13} \\
        -\bar{\beta}_{12} & \bar{\beta}_{22} & \bar{\beta}_{23} \\
        -\bar{\beta}_{13} & \bar{\beta}_{23} & \bar{\beta}_{33}
    \end{pmatrix},
\end{split}
\end{align}
where 
{\allowdisplaybreaks
\begin{align*}
    \bar{\beta}_{11} &= 1 - 20 r^2 + 75 r^4 - 104 r^6 + 48 r^8 \\
    & \quad\, + 4 r^2 \left(16 r^6 - 32 r^4 + 19 r^2 - 3 \right) c_{\beta} \\
    & \quad\, + r^4 \left(3 - 4 r^2 \right)^2 \cos{\left(2 \beta \right)}, 
    \\
    \bar{\beta}_{12} &= -2 r \left(1 - 5 r^2 + 4 r^4 + r^2 \left(-3 + 4 r^2 \right) c_{\beta} \right) s_{\beta}, \\
    \bar{\beta}_{13} &= r \sqrt{1 - r^2} \Big(-6 + 41 r^2 - 80 r^4 + 48 r^6 \\
    & \hspace{2cm} + \left(-2 + 36 r^2 - 96 r^4 + 64 r^6 \right) c_{\beta} \\
    & \hspace{2cm} + r^2 \left(3 - 16 r^2 + 16 r^4 \right) \cos{(2 \beta)} \Big), \\
    \bar{\beta}_{22} &= 1 - r^2 + r^2 \cos{\left(2 \beta \right)}, \\
    \bar{\beta}_{23} &= 2 r^2 \sqrt{1 - r^2} \left(4 r^2 - 3 + \left(4 r^2 - 1 \right) c_{\beta} \right) s_{\beta}, \\
    \bar{\beta}_{33} &= 1 - 19 r^2 + 75 r^4 - 104 r^6 + 48 r^8 \\
    & \quad\, + 4 r^2 \left(-3 + 19 r^2 - 32 r^4 + 16 r^6 \right) c_{\beta} \\
    & \quad\, + r^2 \left(1 - 4 r^2 \right)^2 \left(r^2 - 1 \right) \cos{(2 \beta)}.
\end{align*}}

The expression for the net rotation matrix corresponding to the $L_\phi R_{\pi - \beta} L_{\pi - \beta} R_\phi$ path is given by
\begin{align*}
    &\mathbf{R}_L (\phi) \mathbf{R}_R (\pi - \beta) \mathbf{R}_L (\pi - \beta) \mathbf{R}_R (\phi) \\
    &= \begin{pmatrix}
        \eta_{11} & \eta_{12} & \eta_{13} \\
        -\eta_{12} & \eta_{22} & \eta_{23} \\
        -\eta_{13} & \eta_{23} & \eta_{33}
    \end{pmatrix},
\end{align*}
where
\begin{align}
\begin{split} \label{appeq: eta11}
    \eta_{11} &= 1 - 12 r^2 + 35 r^4 - 42 r^6 + 18 r^8 \\
    & \quad\, + 4 r^2 \left(-2 + 9 r^2 - 13 r^4 + 6 r^6 \right) c_{\beta} \\
    & \quad\, + 2 r^4 \left(3 r^2 - 2 \right) \left(r^2 - 1 \right) \cos{(2 \beta)} \\
    & \quad\, - 4 r^2 \left(r^2 - 1 \right) s_{\beta} \left[\left(2 r^2 - 1 \right) + 2 r^2 c_{\beta} \right] s_{\phi} \\
    & \quad\, - 4 r^2 \left(r^2 - 1 \right) \Big[\left(2 r^2 - 1 \right) \left(\left(3 r^2 - 2 \right) + r^2 \cos{(2 \beta)} \right) \\
    & \hspace{2.8cm} + \left(8 r^4 - 8 r^2 + 1 \right) c_{\beta} \Big] c_{\phi} \\
    & \quad\, + r^4 \Big[2 \left(3 r^2 - 2 \right) \left(r^2 - 1 \right) + 4 \left(2 r^2 - 1 \right) \left(r^2 - 1 \right) c_{\beta} \\ 
    & \hspace{1.2cm} + \left(1 - 2 r^2 + 2 r^4 \right) \cos{(2 \beta)} \Big] \left(c^2_{\phi} - s^2_{\phi} \right) \\
    & \quad\, + 4 r^4 s_{\beta} \left[2 \left(r^2 - 1 \right) + \left(2 r^2 - 1 \right) c_{\beta} \right] s_{\phi} c_{\phi},
\end{split}
\end{align}
\begin{align}
\begin{split} \label{appeq: eta12}
    \eta_{12} &= 2 r \Big(-2 + 9 r^2 - 13 r^4 + 6 r^6 \\
    & \hspace{0.9cm} + \left(-1 + 9 r^2 - 16 r^4 + 8 r^6 \right) c_{\beta} \\
    & \hspace{0.9cm} + r^2 \left(1 - 3 r^2 + 2 r^4 \right) \cos{(2 \beta)} \Big) s_{\phi} \\
    & \quad\, + 2 r \left(-1 + 3 r^2 - 2 r^4 + 2 r^2 \left(1 - r^2 \right) c_{\beta} \right) s_{\beta} c_{\phi} \\
    & \quad\, + 2 r^3 s_{\beta} \left(2 \left(r^2 - 1 \right) + \left(2 r^2 - 1 \right) c_{\beta} \right) \left(c^2_{\phi} - s^2_{\phi} \right) \\
    & \quad\, + 2 r^3 \Big(-4 + 10 r^2 - 6 r^4 + \left(- 4 + 12 r^2 - 8 r^4 \right) c_{\beta} \\
    & \hspace{1.5cm} + \left(-1 + 2 r^2 - 2 r^4 \right) \cos{(2 \beta)} \Big) s_{\phi} c_{\phi},
\end{split}
\end{align}
\begin{align}
\begin{split} \label{appeq: eta13}
    \eta_{13} &= r \sqrt{1 - r^2} \Big(-2 + 15 r^2 - 30 r^4 + 18 r^6 \\
    & \hspace{2cm} + 12 r^2 \left(1 - 3 r^2 + 2 r^4 \right) c_{\beta} \\
    & \hspace{2cm} + 6 r^4 \left(r^2 - 1 \right) \cos{(2 \beta)} \Big) \\
    & \quad\, + 2 r \sqrt{1 - r^2} \Big(-1 + 4 r^2 - 4 r^4 \\
    & \hspace{2.6cm} + 2 r^2 \left(1 - 2 r^2 \right) c_{\beta} \Big) s_{\beta} s_{\phi} \\
    & \quad\, + 2 r \sqrt{1 - r^2} \Big(2 - 11 r^2 + 20 r^4 - 12 r^6 \\
    & \hspace{2.6cm} + \left(1 - 10 r^2 + 24 r^4 - 16 r^6 \right) c_{\beta} \\
    & \hspace{2.6cm} + r^2 \left(-1 + 4 r^2 - 4 r^4 \right) \cos{(2 \beta)} \Big) c_{\phi} \\
    & \quad\, + r^3 \sqrt{1 - r^2} \cos{(2 \phi)} \Big(4 - 10 r^2 + 6 r^4 \\
    & \hspace{3.8cm} + 4 \left(1 - 3 r^2 + 2 r^4 \right) c_{\beta} \\
    & \hspace{3.8cm} + \left(1 - 2 r^2 + 2 r^4 \right) \cos{(2 \beta)} \Big) \\
    & \quad\, + 4 r^3 \sqrt{1 - r^2} \left(2 \left(r^2 - 1 \right) + \left(2 r^2 - 1 \right) c_{\beta} \right) s_{\beta} s_{\phi} c_{\phi},
\end{split}
\end{align}
\begin{align}
\begin{split} \label{appeq: eta22}
    \eta_{22} &= 1 - 5 r^2 + 10 r^4 - 6 r^6 - 4 r^2 \left(2 r^2 - 1 \right) \left(r^2 - 1 \right) c_{\beta} \\
    & \quad\, + 2 r^4 (1 - r^2) \cos{2 \beta} + 4 r^4 \left(r^2 - 1 \right) \cos{\left(\beta - 2 \phi \right)} \\
    & \quad\, + 2 r^2 \left(3 r^2 - 2 \right) \left(r^2 - 1 \right) \cos{(2 \phi)} + r^6 \cos{(2 \beta - 2 \phi)} \\
    & \quad\, + r^2 \left(r^2 - 1 \right)^2 \cos{\left(2 \beta + 2 \phi \right)} \\
    & \quad\, + 4 r^2 \left(r^2 - 1 \right)^2 \cos{\left(\beta + 2 \phi \right)},
\end{split}
\end{align}
\begin{align}
\begin{split} \label{appeq: eta23}
    \eta_{23} &= 2 r^2 \sqrt{1 - r^2} \left(2 r^2 - 1 + 2 r^2 c_{\beta} \right) s_{\beta} c_{\phi} \\
    & \quad\, + 2 r^2 \sqrt{1 - r^2} s_{\phi} \Big(-2 + 7 r^2 - 6 r^4 \\
    & \hspace{3cm} + \left(-1 + 8 r^2 - 8 r^4 \right) c_{\beta} \\
    & \hspace{3cm} + r^2 \left(1 - 2 r^2 \right) \cos{(2 \beta)} \Big) \\
    & \quad\, + 2 r^2 \sqrt{1 - r^2} \left(c^2_{\phi} - s^2_{\phi} \right) s_{\beta} \Big(2 \left(1 - r^2 \right) \\
    & \hspace{4.8cm} + \left(1 - 2 r^2 \right) c_{\beta} \Big) \\
    & \quad\, + 2 r^2 \sqrt{1 - r^2} s_{\phi} c_{\phi} \Big(4 - 10 r^2 + 6 r^4 \\
    & \hspace{3.5cm} + \left(4 - 12 r^2 + 8 r^4 \right) c_{\beta} \\
    & \hspace{3.5cm} + \left(1 - 2 r^2 + 2 r^4 \right) \cos{(2 \beta)} \Big),
\end{split}
\end{align}
\begin{align}
\begin{split} \label{appeq: eta33}
    \eta_{33} &= 1 - 7 r^2 + 25 r^4 - 36 r^6 + 18 r^8 \\
    & \quad\, + 4 r^2 \left(-1 + 6 r^2 - 11 r^4 + 6 r^6 \right) c_{\beta} \\
    & \quad\, + 2 r^4 \left(1 - 4 r^2 + 3 r^4 \right) \cos{(2 \beta)} \\
    & \quad\, + 4 r^2 \left(-1 + 3 r^2 - 2 r^4 + 2 r^2 \left(1 - r^2 \right) c_{\beta} \right) s_{\beta} s_{\phi} \\
    & \quad\, + 4 r^2 \Big(2 - 9 r^2 + 13 r^4 - 6 r^6 \\
    & \hspace{1.5cm} + \left(1 - 9 r^2 + 16 r^4 - 8 r^6 \right) c_{\beta} \\
    & \hspace{1.5cm} + r^2 \left(-1 + 3 r^2 - 2 r^4 \right) \cos{(2 \beta)} \Big) c_{\phi} \\
    & \quad\, + r^2 \bigg(-4 + 14 r^2 - 16 r^4 + 6 r^6 \\
    & \hspace{1.3cm} + 4 \left(-1 + 4 r^2 - 5 r^4 + 2 r^6 \right) c_{\beta} \\
    & \hspace{1.3cm} + \left(-1 + 3 r^2 - 4 r^4 + 2 r^6 \right) \cos{(2 \beta)} \bigg) \cos{(2 \phi)} \\
    & \quad\, + 4 r^2 \left(2 - 4 r^2 + 2 r^4 + \left(1 - 3 r^2 + 2 r^4 \right) c_{\beta} \right) s_{\beta} s_{\phi} c_{\phi}.
\end{split}
\end{align}

Now, consider the solution for $\phi$ obtained by equating $\eta_{22}$ and $\bar{\beta}_{22},$ whose expression is given in \eqref{eq: solution_phi_nonoptimality_CCCCCC_path}. It is desired to verify that the other terms in the matrix equation in \eqref{eq: matrix_equation_CCCCCC_nonoptimality} match. Therefore, the following five relations must be shown using the obtained solution for $\phi$ to show that the two net rotation matrices in \eqref{eq: matrix_equation_CCCCCC_nonoptimality} are equal:
\begin{enumerate}
    \item $\eta_{11} = \bar{\beta}_{11}$
    \item $\eta_{12} = \bar{\beta}_{12}$
    \item $\eta_{13} = \bar{\beta}_{13}$
    \item $\eta_{23} = \bar{\beta}_{23}$
    \item $\eta_{33} = \bar{\beta}_{33}$
\end{enumerate}
To show the above claims, the expressions for $s_{\phi}$ and $c_{\phi}$ are required. As the solution for $\phi$ from \eqref{eq: solution_phi_nonoptimality_CCCCCC_path} is given by $\phi = \delta - \pi,$ it follows that $s_{\phi} = - s_{\delta}, c_{\phi} = - c_{\delta}$. First, the expression for $C^2 + D^2$ is obtained as
\begin{align*}
    C^2 + D^2 &= \Big(5 - 10 r^2 + 6 r^4 + 4 \left(2 r^2 - 1 \right) \left(r^2 - 1 \right) \cos{\beta} \\
    & \quad\,\,\, - 2 r^2 \left(1 - r^2 \right) \cos{2 \beta} \Big)^2 := \left(g (r, \beta) \right)^2.
\end{align*}
To obtain the expression for the positive square root of $C^2 + D^2,$ the sign of the continuous function $g (r, \beta)$ must be determined for $r \in \left(\frac{1}{\sqrt{2}}, \frac{\sqrt{3}}{2} \right], \beta \in \left(0, \pi \right).$ It should be recalled that $D < 0$ over the considered intervals for $r$ and $\beta$; hence, $C^2 + D^2 = \left(g (r, \beta) \right)^2 > 0$, which implies that $g (r, \beta) \neq 0$ as it is a continuous function. Hence, it is sufficient to obtain the sign of $g (r, \beta)$ at some point $(r, \beta) \in \left(\frac{1}{\sqrt{2}}, \frac{\sqrt{3}}{2} \right], \beta \in (0, \pi).$ Choosing $r = 0.8, \beta = \frac{\pi}{2},$ $g \left(0.8, \frac{\pi}{2} \right) = 1.5184 > 0.$
Therefore, $\sqrt{C^2 + D^2} = 5 - 10 r^2 + 6 r^4 + 4 \left(2 r^2 - 1 \right) \left(r^2 - 1 \right) \cos{\beta} - 2 r^2 \left(1 - r^2 \right) \cos{2 \beta}.$ Hence,
{\allowdisplaybreaks
\begin{align}
\begin{split} \label{eq: expression_sphi_alt_path_CCCCCC}
    s_{\phi} = - s_{\delta} &= - \frac{D}{\sqrt{C^2 + D^2}} \\
    &= - \frac{2 \sin{\beta} \left(2 \left(r^2 - 1 \right) + \left(2 r^2 - 1 \right) \cos{\beta} \right)}{g},
\end{split} \\
\begin{split} \label{eq: expression_cphi_alt_path_CCCCCC}
    c_{\phi} = - c_{\delta} &= - \frac{C}{\sqrt{C^2 + D^2}} \\
    &= - \frac{1}{g} \Big(2 \left(3 r^2 - 2 \right) \left(r^2 - 1 \right) \\
    & \hspace{1.1cm} + 4 \left(2 r^2 - 1 \right) \left(r^2 - 1 \right) \cos{\beta} \\
    & \hspace{1.1cm} + \left(2 r^4 - 2 r^2 + 1 \right) \cos{2 \beta} \Big).
\end{split}
\end{align}}
Using the obtained expressions for $s_{\phi}$ and $c_{\phi}$, the other terms in \eqref{eq: matrix_equation_CCCCCC_nonoptimality} are verified in the claims that follow.

\textbf{Claim 6:} $\eta_{11} = \bar{\beta}_{11}$ for $s_{\phi}$ and $c_{\phi}$ as given in \eqref{eq: expression_sphi_alt_path_CCCCCC} and \eqref{eq: expression_cphi_alt_path_CCCCCC}, respectively.

\begin{proof}
Substituting the considered expression for $s_{\phi}$ and $c_{\phi}$ in the expression for $\eta_{11}$ in \eqref{appeq: eta11} and simplifying,
\begin{align} \label{appeq: eta11_expanded}
\begin{split}
    \eta_{11} &= 1 - 12 r^2 + 35 r^4 - 42 r^6 + 18 r^8 \\
    & \quad\, + 4 r^2 \left(-2 + 9 r^2 - 13 r^4 + 6 r^6 \right) \cos{(\beta)} \\
    & \quad\, + 2 r^4 \left(3 r^2 - 2 \right) \left(r^2 - 1 \right) \cos{(2 \beta)} + \frac{t_{2, \eta_{11}}}{g} + \frac{t_{3, \eta_{11}}}{g^2},
\end{split}
\end{align}
where $t_{2, \eta_{11}}$ is contributed to by the terms corresponding to $s_{\phi}$ and $c_{\phi}$ in $\eta_{11}$ in \eqref{appeq: eta11}, which is simplified as
\begin{align*}
    t_{2, \eta_{11}} &= 4 r^2 \left(r^2 - 1 \right) \Big[\left(8 r^4 - 8 r^2 + 1 \right) c_{\beta} \\
    & \hspace{2.2cm} + \left(2 r^2 - 1 \right) \left(3 r^2 - 2 + r^2 \cos{(2 \beta)} \right) \Big] g,
\end{align*}
and $t_{3, \eta_{11}}$ is contributed to by the terms corresponding to $s^2_{\phi}, c^2_{\phi},$ and $s_{\phi} c_{\phi}$ in \eqref{appeq: eta11}, which is simplified as
\begin{align*}
    t_{3, \eta_{11}} &= r^4 g^2 \Big[4 - 10 r^2 + 6 r^4 + 4 \left(2 r^2 - 1 \right) \left(r^2 - 1 \right) c_{\beta} \\
    & \hspace{1.2cm} + \left(1 - 2 r^2 + 2 r^4 \right) \cos{(2 \beta)} \Big].
\end{align*}
Substituting the obtained expressions for $t_{2, \eta_{11}}$ and $t_{3, \eta_{11}}$ in the expression for $\eta_{11}$ in \eqref{appeq: eta11_expanded}, $\eta_{11}$ can be simplified as
\begin{align*}
    \eta_{11} &= 1 - 20 r^2 + 75 r^4 - 104 r^6 + 48 r^8 \\
    & \quad\, + 4 r^2 \left(16 r^6 - 32 r^4 + 19 r^2 - 3 \right) c_{\beta} \\
    & \quad\, + r^4 \left(3 - 4 r^2 \right)^2 \cos{\left(2 \beta \right)} = \bar{\beta}_{11}.
\end{align*}
\end{proof}

\textbf{Claim 7:} $\eta_{12} = \bar{\beta}_{12}$ for $s_{\phi}$ and $c_{\phi}$ as given in \eqref{eq: expression_sphi_alt_path_CCCCCC} and \eqref{eq: expression_cphi_alt_path_CCCCCC}, respectively.
\begin{proof}
Substituting the considered expression for $s_{\phi}$ and $c_{\phi}$ in the expression for $\eta_{12}$ in \eqref{appeq: eta12} and simplifying,
\begin{align} \label{appeq: eta12_expanded}
    \eta_{12} &= \frac{t_{1, \eta_{12}}}{g} + \frac{t_{2, \eta_{12}}}{g^2},
\end{align}
where $t_{1, \eta_{12}}$ is contributed to by the terms corresponding to $s_{\phi}$ and $c_{\phi}$ in $\eta_{12}$ in \eqref{appeq: eta12}, which is simplified as
\begin{align*}
    t_{1, \eta_{12}} &= -2 r \left(r^2 - 1 \right) \left(-1 + 2 r^2 + 2 r^2 c_{\beta} \right) s_{\beta} g,
\end{align*}
and $t_{2, \eta_{12}}$ is contributed to by the terms corresponding to $s^2_{\phi}, c^2_{\phi},$ and $s_{\phi} c_{\phi}$ in \eqref{appeq: eta12}, which is simplified as
\begin{align*}
    t_{2, \eta_{12}} &= -2 r^3 \left(2 \left(r^2 - 1 \right) + \left(2 r^2 - 1 \right) c_{\beta} \right) s_{\beta} g^2.
\end{align*}
Substituting the obtained expressions for $t_{1, \eta_{12}}$ and $t_{2, \eta_{12}}$ in the expression for $\eta_{12}$ in \eqref{appeq: eta12_expanded}, $\eta_{12}$ can be simplified as
\begin{align*}
    \eta_{12} &= -2 r \left(1 - 5 r^2 + 4 r^4 + r^2 \left(-3 + 4 r^2 \right) c_{\beta} \right) s_{\beta} = \bar{\beta}_{12}.
\end{align*}
\end{proof}

\textbf{Claim 8:} $\eta_{13} = \bar{\beta}_{13}$ for $s_{\phi}$ and $c_{\phi}$ as given in \eqref{eq: expression_sphi_alt_path_CCCCCC} and \eqref{eq: expression_cphi_alt_path_CCCCCC}, respectively.

\begin{proof}
Substituting the considered expression for $s_{\phi}$ and $c_{\phi}$ in the expression for $\eta_{13}$ in \eqref{appeq: eta13} and simplifying,
\begin{align} \label{appeq: eta13_expanded}
\begin{split}
    \eta_{13} &= r \sqrt{1 - r^2} \Big(-2 + 15 r^2 - 30 r^4 + 18 r^6 \\
    & \hspace{2cm} + 12 r^2 \left(1 - 3 r^2 + 2 r^4 \right) c_{\beta} \\
    & \hspace{2cm} + 6 r^4 \left(r^2 - 1 \right) \cos{(2 \beta)} \Big) + \frac{t_{2, \eta_{13}}}{g} + \frac{t_{3, \eta_{13}}}{g^2},
\end{split}
\end{align}
where $t_{2, \eta_{13}}$ is contributed to by the terms corresponding to $s_{\phi}$ and $c_{\phi}$ in $\eta_{13}$ in \eqref{appeq: eta13}, which is simplified as
\begin{align*}
    t_{2, \eta_{13}} &= 2 r \sqrt{1 - r^2} \left(2 r^2 - 1 \right) \left(1 - 8 r^2 + 8 r^4 \right) c_{\beta} g \\
    & \,\,\, + 2 r \sqrt{1 - r^2} \left(2 r^2 - 1 \right)^2 \left(-2 + 3 r^2 + r^2 \cos{(2 \beta)} \right) g,
\end{align*}
and $t_{3, \eta_{13}}$ is contributed to by the terms corresponding to $\cos{(2 \phi)}$ (the expression for which can be obtained using $s^2_{\phi}$ and $c^2_{\phi}$) and $s_{\phi} c_{\phi}$ in \eqref{appeq: eta13}, which is simplified as
\begin{align*}
    t_{3, \eta_{13}} &= r^3 \sqrt{1 - r^2} \Big(4 - 10 r^2 + 6 r^4 + 4 \left(1 - 3 r^2 + 2 r^4 \right) c_{\beta} \\
    & \hspace{2.2cm} + \left(1 - 2 r^2 + 2 r^4 \right) \cos{(2 \beta)} \Big) g^2.
\end{align*}
Substituting the obtained expressions for $t_{2, \eta_{13}}$ and $t_{3, \eta_{13}}$ in the expression for $\eta_{13},$ $\eta_{13}$ can be simplified as
\begin{align*}
    \eta_{13} &= r \sqrt{1 - r^2} \Big(-6 + 41 r^2 - 80 r^4 + 48 r^6 \\
    & \hspace{2.1cm} + \left(-2 + 36 r^2 - 96 r^4 + 64 r^6 \right) c_{\beta} \\
    & \hspace{2.1cm} + r^2 \left(3 - 16 r^2 + 16 r^4 \right) \cos{(2 \beta)} \Big) = \bar{\beta}_{13}.
\end{align*}
\end{proof}

\textbf{Claim 9:} $\eta_{23} = \bar{\beta}_{23}$ for $s_{\phi}$ and $c_{\phi}$ as given in \eqref{eq: expression_sphi_alt_path_CCCCCC} and \eqref{eq: expression_cphi_alt_path_CCCCCC}, respectively.
\begin{proof}
Substituting the considered expression for $s_{\phi}$ and $c_{\phi}$ in the expression for $\eta_{23}$ in \eqref{appeq: eta23} and simplifying,
\begin{align} \label{appeq: eta23_expanded}
    \eta_{23} &= \frac{t_{1, \eta_{23}}}{g} + \frac{t_{2, \eta_{23}}}{g^2},
\end{align}
where $t_{1, \eta_{23}}$ is contributed to by the terms corresponding to $s_{\phi}$ and $c_{\phi}$ in $\eta_{23}$ in \eqref{appeq: eta23}, which is simplified as
\begin{align*}
    t_{1, \eta_{23}} &= 2 r^2 \sqrt{1 - r^2} \left(2 r^2 - 1 + 2 r^2 c_{\beta} \right) s_{\beta} g,
\end{align*}
and $t_{2, \eta_{23}}$ is contributed to by the terms corresponding to $s^2_{\phi}, c^2_{\phi},$ and $s_{\phi} c_{\phi}$ in \eqref{appeq: eta23}, which is simplified as
\begin{align*}
    t_{2, \eta_{23}} &= 2 r^2 \sqrt{1 - r^2} \left(2 \left(r^2 - 1 \right) + \left(2 r^2 - 1 \right) c_{\beta} \right) s_{\beta} g^2.
\end{align*}
Substituting the obtained expressions for $t_{1, \eta_{23}}$ and $t_{2, \eta_{23}}$ in the expression for $\eta_{23}$ in \eqref{appeq: eta23_expanded}, $\eta_{23}$ can be simplified as
\begin{align*}
    \eta_{23} &= 2 r^2 \sqrt{1 - r^2} \left(4 r^2 - 3 + \left(4 r^2 - 1 \right) c_{\beta} \right) s_{\beta} = \bar{\beta}_{23}.
\end{align*}
\end{proof}

\textbf{Claim 10:} $\eta_{33} = \bar{\beta}_{33}$ for $s_{\phi}$ and $c_{\phi}$ as given in \eqref{eq: expression_sphi_alt_path_CCCCCC} and \eqref{eq: expression_cphi_alt_path_CCCCCC}, respectively.
\begin{proof}
    Substituting the considered expression for $s_{\phi}$ and $c_{\phi}$ in the expression for $\eta_{33}$ in \eqref{appeq: eta33} and simplifying,
    \begin{align} \label{appeq: eta33_expanded}
    \begin{split}
        \eta_{33} &= 1 - 7 r^2 + 25 r^4 - 36 r^6 + 18 r^8 \\
        & \quad\, + 4 r^2 \left(-1 + 6 r^2 - 11 r^4 + 6 r^6 \right) c_{\beta} \\
        & \quad\, + 2 r^4 \left(1 - 4 r^2 + 3 r^4 \right) \cos{(2 \beta)} + \frac{t_{2, \eta_{33}}}{g} + \frac{t_{3, \eta_{33}}}{g^2},
    \end{split}
    \end{align}
    where $t_{2, \eta_{33}}$ is contributed to by the terms corresponding to $s_{\phi}$ and $c_{\phi}$ in $\eta_{33}$ in \eqref{appeq: eta33}, which is simplified as
    \begin{align*}
        t_{2, \eta_{33}} &= 4 r^2 \left(r^2 - 1 \right) \left(1 - 8 r^2 + 8 r^4 \right) c_{\beta} g \\
        & \quad\, + 4 r^2 \left(r^2 - 1 \right) \left(2 r^2 - 1 \right) \left(3 r^2 - 2 + r^2 \cos{(2 \beta)} \right) g,
    \end{align*}
    and $t_{3, \eta_{33}}$ is contributed to by the terms corresponding to $\cos{(2 \phi)}$ (the expression for which can be obtained using $s^2_{\phi}$ and $c^2_{\phi}$) and $s_{\phi} c_{\phi}$ in \eqref{appeq: eta33}, which is simplified as
    \begin{align*}
        t_{3, \eta_{33}} &= r^2 \left(r^2 - 1 \right) \Big(4 - 10 r^2 + 6 r^4 + 4 \left(1 - 3 r^2 + 2 r^4 \right) c_{\beta} \\
        & \hspace{2.3cm} + \left(1 - 2 r^2 + 2 r^4 \right) \cos{(2 \beta)} \Big) g^2.
    \end{align*}
    Substituting the obtained expressions for $t_{2, \eta_{33}}$ and $t_{3, \eta_{33}}$ in the expression for $\eta_{33}$ in \eqref{appeq: eta33_expanded}, $\eta_{33}$ can be simplified as
    \begin{align*}
        \eta_{33} &= 1 - 19 r^2 + 75 r^4 - 104 r^6 + 48 r^8 \\
        & \quad\, + 4 r^2 \left(-3 + 19 r^2 - 32 r^4 + 16 r^6 \right) c_{\beta} \\
        & \quad\, + r^2 \left(1 - 4 r^2 \right)^2 \left(r^2 - 1 \right) \cos{(2 \beta)} = \bar{\beta}_{33}.
    \end{align*}
\end{proof}



    

    

\subsection{3D model} \label{appsubsect: 3D_model_details}

We would like to begin with the standard derivation of Rotation Minimizing Frame where roll is eliminated along the curve (but the arc length in both frames is the same). 

\subsubsection{Eliminating roll to obtain Bishop/RMF kinematics}

Let $\hat{\mathbf{R}}(s)=[\mathbf{T}(s)\ \mathbf{U}_1(s)\ \mathbf{U}_2(s)]\in SO(3)$ be an adapted frame along an arc-length parameterized curve $\mathbf{X}(s)$, with $\mathbf{T}=\mathbf{X}'(s)$ and kinematics
\begin{align*}
    \mathbf{T}' = w_z \mathbf{U}_1 - w_y \mathbf{U}_2,\\ \mathbf{U}_1' = w_x \mathbf{U}_2 - w_z \mathbf{T},\\ \mathbf{U}_2' = -w_x \mathbf{U}_1 + w_y \mathbf{T},    
\end{align*}
where $(\cdot)'=d(\cdot)/ds$ and $w_x,w_y,w_z$ are scalar functions of $s$ defining the angular velocity of an object traversing the arc at unit speed along $\mathbf{T}, \mathbf{U}_1, \mathbf{U}_2$ respectively. Define a rotated normal basis $(\mathbf{Y},\mathbf{U})$ by a roll angle $\phi(s)$:\[\mathbf{Y}=\cos\phi\,\mathbf{U}_1+\sin\phi\,\mathbf{U}_2,\qquad \mathbf{U}=-\sin\phi\,\mathbf{U}_1+\cos\phi\,\mathbf{U}_2,\]and let $\mathbf{R}(s)=[\mathbf{T}(s)\ \mathbf{Y}(s)\ \mathbf{U}(s)]\in SO(3)$. Then\[\mathbf{T}' = k_1 \mathbf{Y} + k_2 \mathbf{U},\]where\[k_1 = w_z\cos\phi - w_y\sin\phi,\qquad k_2 = -(w_z\sin\phi + w_y\cos\phi).\]Moreover,\[\mathbf{Y}' = -k_1 \mathbf{T} + (w_x-\phi')\mathbf{U},\qquad \mathbf{U}' = -k_2 \mathbf{T} - (w_x-\phi')\mathbf{Y}.\]Consequently, choosing $\phi'(s)=w_x(s)$ eliminates the tangent-spin (roll) term and yields the rotation-minimizing/Bishop-frame kinematics\[\mathbf{T}' = k_1 \mathbf{Y} + k_2 \mathbf{U},\qquad \mathbf{Y}'=-k_1 \mathbf{T},\qquad \mathbf{U}'=-k_2 \mathbf{T}.\]

\subsubsection{Why two controls are sufficient to specify a 3D curve (up to rigid motion)}

A standard result in differential geometry—the fundamental theorem of space curves—states that a sufficiently regular space curve is determined uniquely, up to a Euclidean rigid motion, by two intrinsic scalar functions (curvature $\kappa(s)$ and torsion $\tau(s)$) specified as functions of arc length $s$ \cite{MathWorldFTSC}. Equivalently (and more convenient for our control formulation), one may describe the curve using a rotation-minimizing (RMF) / Bishop frame, in which the curve is characterized by two scalar “Bishop curvatures” $(k_1(s), k_2(s))$ \cite{Bishop1975,FaroukiSpringerRMF}. In Bishop’s construction, an adapted orthonormal frame $(\mathbf{T}, \mathbf{Y}, \mathbf{U})$ evolves as
\[
\frac{d\mathbf{T}}{ds} = k_1 \mathbf{Y} + k_2 \mathbf{U},\quad
\frac{d\mathbf{Y}}{ds} = -k_1 \mathbf{T},\quad
\frac{d\mathbf{U}}{ds} = -k_2 \mathbf{T},
\]
so that the curvature magnitude is $\kappa = \lVert d\mathbf{T}/ds \rVert = \sqrt{k_1^2 + k_2^2}$ \cite{Bishop1975,FaroukiSpringerRMF}. Moreover, Bishop shows that the curve is characterized (up to congruence) by its “normal development” $(k_1(s), k_2(s))$ in the normal plane \cite{Bishop1975}. This explains why our model naturally involves two control inputs: they are precisely the two components of the curvature vector in an orthonormal basis of the normal plane \cite{Bishop1975,FaroukiSpringerRMF}.

As in the classical 2D Dubins path planning problem, our focus is solely on the \emph{kinematics} of the robot rather than its full attitude dynamics. The simplest and most natural representation of the curve geometry is provided by the Bishop (rotation-minimizing) frame, which we adopt throughout this work.

\subsubsection{What “no roll” means here: it is a frame (gauge) choice for the curve geometry}

The usual convention of “roll” corresponds to rotation about the tangent direction. In moving-frame terms, this is the component of the frame’s angular velocity along the tangent $\mathbf{T}$. A rotation-minimizing adapted frame is defined by requiring that the instantaneous angular velocity have zero component along the tangent, i.e., there is no instantaneous spin about $\mathbf{T}$ as one traverses the curve \cite{Bishop1975,FaroukiSpringerRMF}. Thus, in our formulation, “no roll” should be interpreted as: we work in the Bishop/RMF gauge, in which the tangent spin component is eliminated by construction, leaving only the two normal-plane curvature components \cite{Bishop1975,FaroukiSpringerRMF}. Importantly, allowing an additional “roll control” (spin about $\mathbf{T}$) changes the attitude evolution but does not change the centerline curve in $\mathbb{R}^3$; the curve geometry is governed by the normal-plane curvature vector (two degrees of freedom), while roll is an additional attitude degree of freedom about $\mathbf{T}$. This separation is consistent with the fact that the geometry of a space curve requires only two functions to specify the curve up to rigid motion \cite{MathWorldFTSC,Bishop1975}.

\subsubsection{Arc-length minimization is frame-invariant.}

Arc length is purely geometric: $L = \int_0^L ds$ depends only on the curve and not on the chosen frame. Consequently, if for every spatial curve with a specified bound on curvature (or on the norm of the curvature vector in the normal plane) there is an equivalent curve with corresponding bounds on the Bishop curvatures, then \emph{minimizing arc length is invariant} under the choice of representation—Frenet $(\kappa,\tau)$ or Bishop $(k_1,k_2)$. In particular, whether one considers three angular-velocity components (including spin about $\mathbf{T}$) or uses the two Bishop curvatures, the optimal curve (and its length) is unchanged \cite{Bishop1975,FaroukiSpringerRMF,MathWorldFTSC}. 

An intuitive way to think about it is that the tangent vector (a unit vector along a small piece of the arc length) is perpendicular to the normal plane; including the roll rate in the control only changes the orientation in the roll plane, but does not affect the arc length. Since the objective is about arc-length minimization (and not a general function of the orientation of the body), the choice of not including roll rate is intuitively justified.


\subsubsection{Physical interpretation for robots: curvature $\leftrightarrow$ angular-rate components}

If a vehicle traverses the curve at speed $v$, then $d\mathbf{T}/dt = v\, (d\mathbf{T}/ds)$, so the curvature components correspond to angular-rate components scaled by speed. In moving-frame language, the frame derivative encodes the angular velocity decomposition; standard adapted-frame formulations express the associated skew-symmetric “connection” in terms of these components \cite{Bishop1975,FaroukiSpringerRMF}. In the RMF/Bishop gauge, the angular velocity has no tangent (roll) component, leaving only the two normal-plane components—exactly our two controls \cite{Bishop1975,FaroukiSpringerRMF}.

\vfill

\end{document}